\newcommand{\N}{\mathbb{N}}
\newcommand{\R}{\mathbb{R}}
\renewcommand{\AA}{\mathscr{A}}
\newcommand{\PP}{\mathscr{P}}
\newcommand{\UU}{\mathscr{U}}
\newcommand{\calC}{{\ensuremath{\mathcal C}}}
\newcommand{\cE}{{\ensuremath{\mathcal E}}}
\newcommand{\cF}{{\ensuremath{\mathcal F}}}
\newcommand{\cG}{{\ensuremath{\mathcal G}}}
\newcommand{\cH}{{\ensuremath{\mathcal H}}}
\newcommand{\cK}{{\ensuremath{\mathcal K}}}
\newcommand{\cL}{{\ensuremath{\mathcal L}}}
\newcommand{\cM}{{\ensuremath{\mathcal M}}}
\newcommand{\cP}{{\ensuremath{\mathcal P}}}
\newcommand{\cS}{{\ensuremath{\mathcal S}}}
\newcommand{\cW}{{\ensuremath{\mathcal W}}}
\newcommand{\ff}{{\mbox{\boldmath$f$}}}
\newcommand{\oo}{{\mbox{\boldmath$o$}}}
\newcommand{\pp}{{\mbox{\boldmath$p$}}}
\newcommand{\qq}{{\mbox{\boldmath$q$}}}
\newcommand{\uu}{{\mbox{\boldmath$u$}}}
\newcommand{\vv}{{\mbox{\boldmath$v$}}}
\newcommand{\ww}{{\mbox{\boldmath$w$}}}
\newcommand{\xx}{{\mbox{\boldmath$x$}}}
\newcommand{\yy}{{\mbox{\boldmath$y$}}}
\newcommand{\suu}{{\mbox{\scriptsize\boldmath$u$}}}
\newcommand{\sxx}{{\mbox{\scriptsize\boldmath$x$}}}
\newcommand{\fF}{{\mbox{\boldmath$F$}}}
\newcommand{\gG}{{\mbox{\boldmath$G$}}}
\newcommand{\nnu}{{\mbox{\boldmath$\nu$}}}
\newcommand{\xxi}{{\mbox{\boldmath$ \xi$}}}
\newcommand{\snnu}{{\mbox{\scriptsize\boldmath$\nu$}}}
\newcommand{\sfd}{{\sf d}}
\newcommand{\frF}{{\frak F}}
\newcommand{\Kliminf}{K\kern-3pt-\kern-2pt\mathop{\rm
lim\,inf}\limits}  
\newcommand{\Klimsup}{K\kern-3pt-\kern-2pt\mathop{\rm lim\,sup}\limits}  
\newcommand{\supp}{\mathop{\rm supp}\nolimits}   
\newcommand{\argmin}{\mathop{\rm argmin}\limits}   
\newcommand{\Lip}{\mathop{\rm Lip}\nolimits}          
\renewcommand{\d}{{\mathrm d}}
\newcommand{\dt}{{\d t}}
\newcommand{\restr}[1]{\lower3pt\hbox{$|_{#1}$}}
\newcommand{\la}{{\langle}}                  
\newcommand{\ra}{{\rangle}}
\newcommand{\eps}{\varepsilon}  
\newcommand{\nchi}{{\raise.3ex\hbox{$\chi$}}}
\newcommand{\Rd}{{\R^d}}
\newcommand{\media}{\mkern12mu\hbox{\vrule height4pt           %
          depth-3.2pt                                 
          width5pt}\mkern-16.5mu\int\nolimits}        
\def\qed{\ifmmode 
  \else \leavevmode\unskip\penalty9999 \hbox{}\nobreak\hfill
  \fi               
    \qquad           \hbox{\hskip.5em $\square$
                \hskip.1em}}
\def\endproofsym{\qed}
\newenvironment{proof}[1][Proof]{\def\endproofsym{\qed}\trivlist\item[\hskip\labelsep{%
\noindent{\normalfont\emph{#1}.}\hskip .321429\parindent}]\ignorespaces}
{\endproofsym\endtrivlist}
\newcommand{\nc}{\normalcolor}
\newcommand{\GGG}{\color{blue}}
\numberwithin{equation}{section}
\newtheorem{theorem}{Theorem}[section]
\newtheorem{lemma}[theorem]{Lemma}
\newtheorem{proposition}[theorem]{Proposition}
\newtheorem{definition}[theorem]{Definition}
\newtheorem{remark}[theorem]{Remark}
\renewcommand{\GGG}{}
\newcommand{\mres}{\mathbin{\vrule height 1.6ex depth 0pt width
0.13ex\vrule height 0.13ex depth 0pt width 1.3ex}}
\renewcommand{\UU}{U}
\newcommand{\MF}{}
\newenvironment{system} 
{\left\lbrace\begin{array}{@{}l@{}}}
{\end{array}\right.}
\title{Mean-field optimal control as \\
  Gamma-limit of finite agent controls}
\author{Massimo Fornasier \thanks{Department of Mathematics, TU M\"unchen. Boltzmannstr. 3, Garching bei M\"unchen, D-85748, Germany.  \texttt{ massimo.fornasier@ma.tum.de}} \and 
Stefano Lisini \thanks{Dipartimento di Matematica ''F. Casorati'', Universit\`a di Pavia. Via Ferrata 5, 27100 Pavia, Italy. \texttt{stefano.lisini@unipv.it}} \and 
Carlo Orrieri \thanks{Dipartimento di Matematica ''G. Castelnuovo'', Sapienza Universit\`a di Roma. Piazzale Aldo Moro 5, 00185 Roma, Italy. \texttt{orrieri@mat.uniroma1.it}} \and 
Giuseppe Savar\'e \thanks{Dipartimento di Matematica ''F. Casorati'', Universit\`a di Pavia. Via Ferrata 5, 27100 Pavia, Italy. \texttt{giuseppe.savare@unipv.it}}}
\begin{document}

\maketitle

\begin{abstract}
{\MF This paper focuses on the role of a government of a large population of interacting agents as a mean field optimal control problem derived from deterministic finite agent dynamics. 
The control problems are constrained by a PDE of continuity-type without diffusion, governing the dynamics of the probability distribution of the agent population. 
We derive existence of optimal controls in a measure-theoretical setting as natural limits of finite agent optimal controls without any assumption on the regularity of control competitors. 
In particular, we prove the consistency of mean-field optimal controls with corresponding underlying finite agent ones.
The results follow from a $\Gamma$-convergence argument constructed over the mean-field limit, which stems from leveraging the superposition principle.}
\end{abstract}

\noindent {\bf Keywords}: finite agent optimal control, mean-field optimal control, $\Gamma$-convergence, superposition principle

\section{Introduction}
{\MF In the mathematical modelling of biological,  social, and economical  phenomena, self-organization of multi-agent interaction systems has become a focus
of applied mathematics and physics and mechanisms are studied towards the formation of global patterns. 
In the last years there has been a vigorous development of literature describing collective behaviour of interacting agents \cite{cucker2011general, cucker2008flocking, cucker2007emergent, gregoire2004onset, Jadbabaie2003correction, ke2002self, vicsek1995novel}, 
towards modeling phenomena in biology, such as motility and cell aggregation \cite{camazine2003self, keller1970initiation, koch1998social, perthame2007transport},  coordinated animal motion 
\cite{ballerini2008interaction, carrillo2009double, chuang2007state, couzin2003self, couzin2005effective, 
cristiani2010modelling, cucker2007emergent, niwa1994self, parrish1999complexity, parrish2002self, romey1996individual, toner1995long, yates2009inherent}, 
coordinated human \cite{cristiani2011multiscale, cucker2004foundations, short2008statistical} and synthetic agent interactions and behaviour, as in the case of 
cooperative robots \cite{chuang2007multi, leonard2001virtual, perea2009extension, sugawara1997cooperative}. 
Part of the literature is particularly focused on studying corresponding mean-field equations in order to simplify models for large populations of interacting agents: 
the effect of all the other individuals on any given individual is described by a single averaged effect. 
As it is very hard to be exhaustive in accounting all the developments of this very fast growing field, we refer to \cite{carrillo2014derivation, carrillo2017review, carrillo2010particle, choi2017emergent, vicsek2012collective} for recent surveys.
\\
Self-organization  is an incomplete concept, see, e.g. \cite{bongini2015unconditional}, as it is not always occurring when needed. In fact, local interactions between agents can be interpreted as distributed controls, which however are not always able to lead to global coordination or pattern formation. This motivated the research also of centralized optimal controls for multi-agent systems, modeling the intervention of an external government to induce desired dynamics or pattern formation. In this paper we are concerned with the control of deterministic multi-agent systems of the type
\begin{equation}\label{eq:state0}
  \dot x_i(t)=\fF^N(x_i(t),\xx(t))+ u_i(t),\quad i=1,\cdots, N.
\end{equation}
The map $\fF^N:\R^d\times (\R^d)^N\to \R^d$ models the interaction between the agents and 
$\uu$ represents the action of an external controller on the system. The control is optimized by minimization of a cost functional
\begin{equation}
  \label{finite_functional0}
    \cE^N(\xx,\uu):=\media_0^T \frac 1N\sum^N_{i=1} L^N(x_i(t),\xx(t))\,\d t+
  \media_0^T \frac 1N\sum^N_{i=1}\psi(u_i(t))\,\d t,
\end{equation}
where $L^N$ is a suitable cost function used for modelling the goal of the control and capturing the work done to achieve it, and $\psi$ is an appropriate positive convex function, which is superlinear at infinity and models the effective cost of employing the control. 
When the number $N$ of agents is very large, dynamical programming for solving the optimal control problem defined by minimization of \eqref{finite_functional0} under the constraints  \eqref{eq:state0} become computationally intractable. In fact, Richard Bellman coined the term ``curse of dimensionality'' precisely to describe this phenomenon. 

For situations where agents are indistinguishable, {\GGG e.g.}~drawn independently at random from an initial probability distribution $\mu_0$, and the dynamics $\fF^N(x_i(t),\xx(t))=\fF^N(x_i(t),\mu^N_t)$ depends in fact from the empirical distribution $\mu_t^N=\frac{1}{N} \sum_{i=1}^N \delta_{x_i(t)}$ one may hope to invoke again the use of mean-field approximations for a tractable (approximate) solution of the control problem. By formally considering the mean-field limit of the system \eqref{eq:state0} for $N\to \infty$ one obtains the continuity equation of Vlasov-type
\begin{equation}\label{eq:vlasov0}
  \partial_t \mu_t+\nabla\cdot\Big((\fF(x,\mu_t)+\vv_t)\mu_t\Big)=0
  \quad
  \text{in }(0,T)\times \R^d,
\end{equation}
where $\mu$ 
{\GGG is the weak limit of $\mu^N$ and represents}
the (time dependent) probability distribution of agents, and  
{\GGG $\nnu=\vv\mu$ is a suitable vector control  measure absolutely continuous w.r.t.~$\mu$ and
subjected to a cost functional 
 \begin{equation}
  \label{infinite_functional0}
    \cE(\mu,\nnu):=\media_0^T \int_{\R^d} L(x,\mu_t)\,\d\mu_t(x)\,\d t + 
    \GGG \media_0^T \int_{\R^d}\psi(\vv(t,x))\,\d\mu_t(x)\,\d t,
\end{equation}
The vector measure $\nnu=\vv\mu$ can in fact be obtained as the weak limit
of the sequence of finite dimensional control measures
\begin{equation}
  \label{eq:73}
  \nnu^N=\media_0^T \delta_t\otimes \nnu^N_t\,\d t,\quad
  \nnu^N_t = \frac{1}{N} \sum_{i=1}^N u_i(t) \delta_{x_i(t)},
  \quad t\in [0,T].
\end{equation}
{\GGG Under suitable assumptions on $\psi$, on the convergence of 
$\fF^N$ to $\fF$ and of $L^N$ to $L$, and assuming for simplicity that 
the initial data are confined in a compact subset of $\R^d$, 
one of the} main result of this paper can be summarized as follows.
\begin{theorem}\label{main0}
If the initial measures $\mu_0^N=\frac{1}{N} \sum_{i=1}^N \delta_{x^N_i(0)}$ 
weakly converge to a limit probability measure $\mu_0$ then 
the minimum $E^N(\mu_0^N)$ of \eqref{finite_functional0} among all the solution
of the controlled system \eqref{eq:state0}
converges to the minimum $E(\mu_0)$ of the functional \eqref{infinite_functional0}
among all the solutions of \eqref{eq:vlasov0} with initial datum $\mu_0$. 
Moreover, all the accumulation points (in the topology of weak convergence of measures) of the 
measures associated to minimizers $\xx^N,\uu^N$ of \eqref{finite_functional0}
are minima of \eqref{infinite_functional0}.
\end{theorem}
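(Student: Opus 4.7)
The plan is to frame the statement as a $\Gamma$-convergence result in a common topological space containing both the finite-agent and the mean-field objects. I would encode a finite-agent admissible pair $(\xx^N,\uu^N)$ as the pair of measures $(\mu^N,\nnu^N)$ with $\mu^N_t=\frac 1N\sum_i\delta_{x_i(t)}$ and $\nnu^N$ given by \eqref{eq:73}, and I would view $(\mu,\nnu)$ with $\nnu=\vv\mu$ in the same space of time-indexed probability/vector measures on $\R^d$ endowed with weak convergence. The claim then reduces to: (i) $\Gamma$-liminf inequality $\cE\le \liminf_N\cE^N$ along weakly convergent sequences whose limit solves \eqref{eq:vlasov0}; (ii) existence of a recovery sequence giving $\Gamma$-limsup; (iii) equicoercivity, so that sequences of finite-agent minimizers are precompact and accumulate at solutions of \eqref{eq:vlasov0}. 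Together with convergence of initial data $\mu_0^N\rightharpoonup\mu_0$, these imply the convergence of minima $E^N(\mu_0^N)\to E(\mu_0)$ and the characterization of accumulation points as minimizers.

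For equicoercivity, I would exploit that $\psi$ is superlinear at infinity: a uniform bound on $\cE^N(\xx^N,\uu^N)$ gives an equi-integrable bound on the control densities, and hence tightness of $\{\nnu^N\}$. Since the interaction term $\fF^N$ is assumed to converge to $\fF$ (presumably locally uniformly with the usual linear growth), the ODE system \eqref{eq:state0} yields a uniform $L^1$-in-time bound on $|\dot x_i|$, whence equi-continuity of the curves $t\mapsto \mu_t^N$ in a Wasserstein-type sense, and hence tightness of $\{\mu^N\}$ in $C([0,T];\mathcal P(\R^d))$ thanks to the compact support of the initial data. Passing to the limit in the weak form of the continuity equation is then routine, since \eqref{eq:vlasov0} is linear in $(\mu,\nnu)$ except for the $\fF(x,\mu_t)\mu_t$ term, which is continuous under joint weak convergence once $\fF^N\to\fF$ is locally uniform.

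For the $\Gamma$-liminf, the running cost $\int L^N(x,\mu_t^N)\,\d\mu_t^N(x)$ converges to $\int L(x,\mu_t)\,\d\mu_t(x)$ by uniform convergence of $L^N$ to $L$ on compact sets together with weak convergence of $\mu_t^N$, while the control cost
\[
\media_0^T\int \psi(\vv)\,\d\mu_t\,\d t \;\le\; \liminf_{N\to\infty}\; \media_0^T\frac 1N\sum_i\psi(u_i(t))\,\d t
\]
follows from Ioffe-type lower semicontinuity for convex integrands of measures: writing the right-hand side as an integral functional of the pair $(\mu^N,\nnu^N)$ via the recession/perspective function of $\psi$, convexity and superlinearity give joint lower semicontinuity under weak convergence. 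The continuity equation in the limit follows from the test-function formulation, using that $\nnu^N\rightharpoonup\nnu$ and that $\fF^N(\cdot,\mu^N_t)\to \fF(\cdot,\mu_t)$ pairs continuously with $\mu^N_t\rightharpoonup\mu_t$.

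The delicate step, and in my view the main obstacle, is the recovery sequence, where the superposition principle plays its structural role. Given an admissible $(\mu,\nnu=\vv\mu)$ with finite $\cE(\mu,\nnu)$, I would apply the superposition principle to lift $\mu$ to a probability measure $\eta$ on absolutely continuous curves $\gamma:[0,T]\to\R^d$ with $\dot\gamma(t)=\fF(\gamma(t),\mu_t)+\vv(t,\gamma(t))$ and $(e_t)_\#\eta=\mu_t$. I would then construct empirical approximations by sampling $N$ trajectories $\gamma_1^N,\dots,\gamma_N^N$ (e.g.\ by a quantization/optimal transport argument so as to guarantee $\frac 1N\sum\delta_{\gamma_i^N(0)}\rightharpoonup\mu_0$ from the given $\mu_0^N$) and defining $x_i^N(t)=\gamma_i^N(t)$, with controls $u_i^N(t)=\dot x_i^N(t)-\fF^N(x_i^N(t),\xx^N(t))$ so that \eqref{eq:state0} is satisfied by construction. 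Convergence of the state and control measures $(\mu^N,\nnu^N)\to(\mu,\nnu)$ and of the costs follows by a stability argument for the ODEs (using $\fF^N\to\fF$) and by Jensen-type estimates for the $\psi$-term together with continuity of $L^N\to L$. Combining $\Gamma$-liminf and $\Gamma$-limsup with equicoercivity, the fundamental theorem of $\Gamma$-convergence yields convergence of minima and weak accumulation of minimizers at minimizers of the mean-field problem, which is the assertion of Theorem~\ref{main0}.
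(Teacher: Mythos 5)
Your overall architecture coincides with the paper's: a $\Gamma$-convergence argument in the space of pairs $(\mu,\nnu)$, with the liminf obtained from Jensen plus lower semicontinuity of convex functionals of measures, equicoercivity from superlinearity of $\psi$ and $W_1$-equicontinuity, and the recovery sequence built by lifting $(\mu,\nnu)$ to a measure on trajectories via the superposition principle, discretizing it, and defining the controls as $u_i^N=\dot x_i^N-\fF^N(x_i^N,\xx^N)$. Up to technical points that the paper handles with some care (the Jensen/disintegration step when particles collide, Lusin-type restriction to compact sets of curves on which the four auxiliary functionals are continuous, Visintin's theorem and the doubling property of $\phi$ to pass to the limit in the $\psi$-cost), this is the same proof.

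There is, however, one genuine gap. Your recovery sequence is obtained by sampling trajectories of the superposition measure, so its initial configuration $\hat\xx_0^N$ consists of points of $\supp(\mu_0)$ chosen by the discretization; it is \emph{not} the prescribed configuration $\xx^N(0)$ appearing in the statement. The $\Gamma$-limsup therefore only yields $\limsup_N E^N(\hat\xx_0^N)\le E(\mu_0)$, not $\limsup_N E^N(\xx_0^N)\le E(\mu_0)$, and the parenthetical ``quantization/optimal transport argument so as to guarantee weak convergence from the given $\mu_0^N$'' does not close this: weak convergence of both initial empirical measures to $\mu_0$ does not let you transfer the upper bound from one sequence of initial data to the other, because a competitor for $E^N(\xx_0^N)$ must start exactly at $\xx_0^N$. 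The paper resolves this (third claim of Theorem \ref{t:minima}) by prepending to the recovery trajectories a linear interpolation from $\xx_0^N$ to $\hat\xx_0^N$ over a short time interval $[0,\delta]$; the control needed on this layer has size of order $|\hat x_{0,i}^N-x_{0,i}^N|/\delta$, so its $\psi$-cost is controlled only via the doubling estimate $\phi(r/\delta)\le \mathrm e^{K/\delta}(1+\phi(r))$ combined with $c_N:=\calC_\phi(\mu[\hat\xx_0^N],\mu[\xx_0^N])\to 0$ (Lemma \ref{le:convergence}) and the sharp choice $\delta(N)=-K(\log c_N)^{-1}$, under the standing assumption that the initial data are compactly supported. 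This initial-layer construction is a nontrivial ingredient missing from your proposal; without it the convergence of the minima $E^N(\mu_0^N)$ for the \emph{given} initial data is not established.
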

}
The idea of solving finite agent optimal control problems by considering a mean-field approximation has been considered since the 1960's 
\cite{fleming1977generalized, florentin1961optimal, kushner1962optimal} with the introduction of stochastic optimal control. The optimal  control of stochastic differential equations 
\begin{equation}\label{eq:state0stoch}
  d X^i_t=\fF(X^i_t,\mu^N_t)+ \vv(t,X^i_t) + \sigma d W^i_t,\quad i=1,\cdots, N;
\end{equation}
with non-degenerate diffusion and independent Brownian motions $W^i$, has been for a long while studied via the optimal control of the law $\mu_t = \operatorname{Law}(X_t)$ constrained by a McKean-Vlasov equation
\begin{equation}\label{eq:mckeanvlasov0}
  \partial_t \mu_t+\nabla\cdot\Big((\fF(x,\mu_t)+\vv(t,x) )\mu_t\Big)= \sigma \Delta \mu_t, 
\end{equation} 
under a suitable control cost
 \begin{equation}
  \label{infinite_functional1}
    \cE(\mu,\vv):=  \media_0^T \int_{\R^d} L(x,\mu_t)\,\d\mu_t(x)\,\d t + \media_0^T \int_{\R^d} \psi(\vv(t,x)) \d\mu_t(x)\,\d t.
\end{equation}
Most of the literature on  stochastic control  is focused primarily on the solution of McKean-Vlasov optimal control problems. 
The most popular methods are based on extending Pontryagin's maximum principle \cite{albi2017mean, andersson2011maximum, bensoussan2013mean, buckdahn2011general, carmona2013control}  
or deriving a dynamic programming principle, and with it a form of a Hamilton-Jacobi-Bellman equation on a space of probability measures \cite{Bayraktar2018randomized, lauriere2014dynamic, pham2018bellman}.
However, the rigorous justification that the McKean-Vlasov optimal control problem is consistent with the limit of optimal controls for stochastic finite agent models has been proved surprisingly just very recently \cite{lacker2017limit}. 
The techniques used in the latter paper are largely based on martingale problems, combining ideas from the McKean-Vlasov limit theory with a well-established compactification method for stochastic control \cite{fleming1977generalized}.\\
Due to the probabilistic nature of the methods used for stochastic control, the consistency results become weaker for the limiting case of vanishing viscosity $\sigma\to0$ as in \eqref{eq:vlasov0} (see e.g. \cite{lacker2017limit}). 
Sharp results for purely deterministic dynamics \eqref{eq:state0} require indeed measure-theoretical methods. 
The first work addressing the consistency of mean-field optimal control for deterministic finite agent systems is \cite{fornasier2014mean1}. 
In the latter paper an analogous result as Theorem \ref{main0} is derived for general penalty functions $\psi$ with polynomial growth, 
including the interesting case of linear growth at $0$ and  infinity, motivated by results of sparse controllability for finite-agent models 
\cite{bongini2014sparse, bongini2017sparse, caponigro2013sparse}. Other models of sparse mean-field optimal control have been considered in \cite{albi2016invisible, bongini2017mean, fornasier2014mean2}.
The generality of the  penalty function $\psi$ in \cite{fornasier2014mean1} has required  to {\GGG
restrict the class of controls: they have been assumed to be} locally Lipschitz continuous in space feedback control functions $u_i(t)=\vv(t,x_i)$ with controlled time-dependent Lipschitz constants. 

In this paper and in our main result Theorem \ref{main0} we remove this restriction, but we {\GGG
still impose suitable coercivity on the admissible controls, by}
requiring the function $\psi$ to have superlinear growth to infinity. As sparsity of controls, i.e. the localization of controls in space, is mainly due to the linear behaviour of the penalty function at $0$, the superlinear growth at infinity does not exclude the possibility of using this model for sparse control. Moreover, in this framework there is no need for enforcing a priori that controls are smooth feedback functions of the state variables and the limit process comes very natural in a measure-threoretical sense. In view of the minimal smoothness required to the governing interaction functions $\fF^N, \fF$ (they are assumed to be just continuous) there is no uniqueness of solutions in general of \eqref{eq:state0} and \eqref{eq:vlasov0}. Hence, the main results of mean-field limit are derived by leveraging the powerful machinery of the superposition principle \cite[Section 3.4]{Ambrosio2014continuity}.
\\

The paper is organized as follows: after recalling  in details the notation 
{\GGG and a few preliminary results on optimal transport, doubling functions and convex functionals on measures in Section 2, we describe our setting of optimal control problems in Section 3, 
together with the precise statements of our main results.} 
We address the existence of solutions of the finite agent optimal control problem in Section 4. Crucial moment estimates are derived in Section 5 for feasible competitors for the mean-field  control problem, which are useful for deriving compactness arguments further below. Section 6 is dedicated to {\GGG the proofs of 
our main Theorems. A relevant part} is devoted to Theorem \ref{main0} by developing a $\Gamma$-convergence argument. While the $\Gamma-\liminf$ inequality follows by relatively standard lower-semicontinuity arguments, the derivation of the  $\Gamma-\limsup$ inequality requires a technical application of the superposition principle. Equi-coercivity and convergence of minimizers follow from compactness arguments based on moment estimates from Section 5.
}

\section{Notation and preliminary results}

Throughout the paper we work with $\R^d$ as a state space and we fix a time horizon $T > 0$. 
\GGG We will denote by $\lambda$ the normalized restriction of the Lebesgue measure
to $[0,T]$, $\lambda:=\frac 1T \mathscr L^1\mres{[0,T]}$.\nc

Given $(\cS, d)$ a metric space, we use the classical notation $AC([0,T];\cS)$ for the classes of $\cS$-valued absolute continuous curves. 
We indicate with $\cM(\R^d), \cM(\R^d;\R^d)$ the space of Borel (vector-valued) measures.

\subsection{Probability measures and optimal transport costs}
\label{subsec:prob}

We call \GGG $\cP(\R^d)$ the space of Borel probability measures. 
If $f:\Omega\to \R^h$ is a Borel map defined in a Borel subset $\Omega$ of $\R^d$,
and $\mu\in \cP(\R^d)$ is concentrated on $\Omega$, we will denote by 
$f_\sharp\mu$ the Borel measure in $\R^h$ defined by $f_\sharp\mu(B):=\mu(f^{-1}(B))$,
for every Borel subset $B\subset \R^h$.

Whenever $\psi:\R^d\to [0,+\infty]$ is a lower semicontinuous function, we set
\begin{equation}
  \label{eq:36}
  \calC_\psi(\mu_0,\mu_1):=\inf 
  \left\lbrace \int_{\R^d \times \R^d} \psi(y-x) \,\d\gamma(x,y): \gamma \in \Pi(\mu_0,\mu_1) \right\rbrace ,
\end{equation}
where $\Pi(\mu_0,\mu_1)$ is the set of the optimal transport plans:
\begin{equation*}
\Pi(\mu_0,\mu_1):= \lbrace \gamma \in \cP(\R^d \times \R^d): \gamma (B \times \R^d)= \mu_0(B), \gamma (\R^d \times B)= \mu_1(B) \quad \forall \, B \text{ Borel set in } \R^d \rbrace. 
\end{equation*}
In the particular case when $\psi(z):=|z|$, $z\in \R^d$, \eqref{eq:36} defines the $L^1$-Wasserstein distance
\begin{equation}
\label{eq:37}
W_1(\mu_0, \mu_1) := \inf \left\lbrace \int_{\R^d \times \R^d} |x-y| \,\d\gamma(x,y): \gamma \in \Pi(\mu_0,\mu_1) \right\rbrace;
\end{equation}
the infimum in \eqref{eq:37} is always finite and attained if $\mu_0,\mu_1$ belong to the space
$\cP_1(\R^d)$ of Borel probability measure with finite first order moment:
\begin{equation}
  \label{eq:39}
  \cP_1(\R^d):=\Big\{\mu\in \cP(\R^d):\int_{\R^d}|x|\,\d\mu(x)<+\infty\Big\}.
\end{equation}
$\cP_1(\R^d)$ endowed with $W_1(\mu_0, \mu_1)$ is a complete and separable metric space.
\GGG In particular we will consider absolutely continuous curves $t\mapsto \mu_t$ in $AC([0,T];\cP_1(\R^d))$. 
They will canonically induce a parametrized measure $\tilde\mu:=\int \delta_t\otimes \mu_t\,\d \lambda(t) $
in $\cP_1([0,T]\times \R^d)$, satisfying
\begin{equation}
  \label{eq:28}
  \int f(t,x)\,\d\tilde\mu(t,x)=\int_0^T \int_{\R^d}f(t,x)\,\d\mu_t(x)\,\d\lambda(t)
  =\media_0^T \int_{\R^d}f(t,x)\,\d\mu_t(x)\,\d t.
\end{equation}
Convergence with respect to $W_1$ is equivalent to weak convergence (in duality with continuous and bounded functions) supplemented with convergence of first moment; equivalently, 
for every sequence $(\mu_n)_{n\in \N}\subset \cP_1(\R^d)$ and candidate limit $\mu\in \cP_1(\R^d)$ 
\begin{equation}
  \label{eq:41}
  \lim_{n\to\infty}W_1(\mu_n,\mu)=0
  \quad
  \Leftrightarrow\quad
  \lim_{n\to\infty}\int\zeta\,\d\mu_n=
  \int\zeta\,\d\mu
  \quad
  \text{for every }\zeta\in C(\R^d),\
  \sup_{x\in \R^d}\frac{\zeta(x)}{1+|x|}<\infty.
\end{equation}
\nc
In $\cP_1(\R^d)$ we consider the subset $\cP^N(\R^d)$ of discrete measures
\begin{displaymath}
  \cP^N(\R^d):=\left\{\mu=\frac 1N\sum_{i=1}^N \delta_{x_i} \text{ for some }x_i\in \R^d\right\}.
\end{displaymath}
A measure $\mu$ belongs to $\cP^N(\R^d)$ if and only if
$\#\supp(\mu)\le N$ and $N\mu(B)\in \N$ for every Borel set $B$ of $\R^d$.
Let us now fix an integer $N\in \N$ and consider vectors $\xx=(x_1,\cdots,x_N) \in (\R^d)^N$;
we will use the notation $\sigma: (\R^d)^N\to (\R^d)^N$ to 
denote a permutation of the coordinates of vectors in $(\R^d)^N$ and we set
\begin{displaymath}
  \sfd_N(\xx,\yy):=\min_\sigma \frac 1N\sum_{i=1}^N |x_i-\sigma(\yy)_{i}|,\quad
  |\xx|_N:=\sfd_N(\xx,\oo)=
  \frac 1N\sum_{i=1}^N |x_i|.
\end{displaymath}
To every vector $\xx\in (\R^d)^N$ we can associate the measure 
$\mu[\xx]:=\frac 1N \sum_{i=1}^N \delta_{x_i}\in \cP^N(\R^d)$ 
and we notice that by \eqref{eq:66}
\cite[Theorem 6.0.1]{Ambrosio2008gradient}
\begin{equation}
  \label{eq:71}
  \sfd_N(\xx,\yy)=W_1(\mu[\xx],\mu[\yy]),\quad
  |\xx|_N=\int_{\R^d} |x|\,\d\mu[\xx](x)=
  W_1(\mu[\xx],\delta_0).
\end{equation}

\noindent From now on we say that a map $\gG^N:\R^d\times (\R^d)^N\to \R^k$ is symmetric if 
\begin{displaymath}
  \gG^N(x,\yy)=\gG^N(x,\sigma(\yy))\quad\text{for every permutation  }\sigma:(\R^d)^N\to (\R^d)^N.
\end{displaymath}
Given a symmetric and continuous map $\gG^N$ we can associate
a function defined on measures $G^N:\R^d\times \cP^N(\R^d)\to \R^k$ by setting
\begin{equation}
  \label{eq:4}
  G^N(x,\mu[\yy]):=\gG^N(x,\yy).
\end{equation}
Throughout the paper we use the following notion of convergence for symmetric maps: 
\begin{definition}\label{d:convergence}
We say that a {\MF sequence} of symmetric maps $G^N$, $N\in \N$, $\cP_1$-converges to 
$G:\R^d\times \cP_1(\R^d)\to \R^k$ uniformly on compact sets as $N\to +\infty$ if
for every sequence of measure $\mu_k\in \cP^{N_k}(\R^d)$ converging to $\mu$ in
$\cP_1(\R^d)$ as $N_k\to\infty$ we have
\begin{equation}\label{eq:6}
  \lim_{k\to+\infty} \sup_{x \in C} \left| G^{N_k}(x,\mu_k)- G(x,\mu)\right|  = 0, \quad\text{for every compact }C \subset \R^d.
\end{equation}
\end{definition}

\subsection{Doubling and moderated convex functions}
\begin{definition}\label{def:Young}
We say that $\phi: [0,+\infty) \to [0,+\infty)$
is an \emph{admissible} 
function if
$\phi(0)=0$, $\phi$ is strictly convex \GGG and of class $C^1$ with $\phi'(0)=0$, \nc
superlinear at $+\infty$,
and doubling, i.e., there exists $K>0$ 
\GGG such that 
\begin{equation}
\phi(2r)\leq K\big(1+\phi(r)\big)\quad
\text{for any $r\in[0,+\infty).$}
\label{eq:19}
\end{equation}
\GGG Let $U$ be a subspace of $\R^d$.
We say that a convex function $\psi:U\to [0,+\infty)$ 
is \emph{moderated} if there exists an admissible function $\phi:[0,+\infty)  \to [0,+\infty)$ and a constant $C>0$
such that 
\begin{equation}\label{hp:phi}
\phi(|x|)-1 \leq \psi(x) \leq C(1 + \phi(|x|)) \quad \text{for every }x \in U.
\end{equation} 
\end{definition}
{\MF By convexity, an admissible function $\phi$ satisfies
  $\phi(r) + \phi'(r)(s-r) \leq \phi(s)$ \GGG
for every $r,s\in [0,+\infty)$; 
in particular choosing $s=0$ and \MF $s=2r$ one obtains
\begin{equation}\label{eq:doubl2}
\GGG 0\le \phi(r)\le \MF r \phi'(r) \leq (\phi(2r) - \phi(r)) \leq 
\GGG  K\big(1+\phi(r)\big)\quad
\text{for every }r\in [0,+\infty).
\end{equation}
}%
\GGG 
It is not difficult to see that if a differentiable convex function $\phi$ satisfies
\begin{equation}
  \label{eq:21}
  r\phi'(r)\le A(1+\phi(r))\quad\text{for every }r\ge R,
\end{equation}
for some constants $A,R>0$, then $\phi$ satisfies \eqref{eq:19} with 
$K=\max(\mathrm e^A,\max_{[0,2R]}\phi)$. In fact, differentiating the function
$z\mapsto (\phi(z r)+1)$ for $z\in [1,D]$ and $r\ge R$
we get
$\frac{\partial }{\partial\theta}\big(\phi(\theta r)+1\big)=r\phi'(\theta r)\le 
A(1+\phi(\theta r))$
so that 
\begin{equation}
\phi(Dr)\le (\phi(r)+1)\mathrm e^{(D-1)A}
\quad D>1,\ r>R.\label{eq:70}
\end{equation}
In particular \eqref{eq:19} yields
\begin{equation}
\phi(Dr)\le (\phi(r)+1)\mathrm e^{(D-1)K}
\quad D>1,\ r>0.\label{eq:70bis}
\end{equation}
We also recall that $\phi'$ is monotone, i.e.
  \begin{equation}
    (\phi'(r)- \phi'(s))(r-s) \ge 0\quad
    \text{for every }s,r\ge0.\label{eq:62}
\end{equation}

The next lemma shows that it is always possible to 
approximate a convex superlinear function
by a monotonically increasing sequence of moderated ones.

\begin{lemma}
  \label{le:approximation}
  Let $\UU$ be a subspace of $\R^d$ and $\psi:U \to [0,+\infty]$ be a superlinear function
  with $\psi(0)=0$. 
  \begin{enumerate}
  \item There exists an admissible function $\theta:[0,+\infty)\to[0,+\infty)$
    such that 
    \begin{equation}
      \label{eq:38}
     \psi(x)\ge \theta(|x|)-\frac 12\quad\text{for every $x\in \UU$.} 
    \end{equation}
  \item If $\psi$ is also convex, then there exists
  a sequence $\psi^N:\UU\to [0,+\infty)$, $N\in \N$, of \emph{moderated convex functions} such
  that 
  \begin{equation}
    \label{eq:20}
    \psi^N(x)\le \psi^{N+1}(x),\qquad
    \psi^N(x)\uparrow \psi(x)\quad\text{as }N\to +\infty\quad\text{for every }x\in \UU.
  \end{equation}
  \end{enumerate}
\end{lemma}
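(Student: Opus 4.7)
The plan is to prove (i) first, using the superlinearity of $\psi$ to extract an admissible minorant $\theta$, and then deploy the admissible $\phi := \theta$ in (ii) via a Moreau--Yosida-type inf-convolution combined with a pointwise maximum against $\phi(|x|)-1$.

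For part (i), set $h(r) := \inf\{\psi(x) : x \in U,\ |x| \geq r\}$; by $\psi(0)=0$ and the superlinearity of $\psi$, $h$ is non-decreasing with $h(0) = 0$ and $h(r)/r \to +\infty$. Choose $R_n \uparrow +\infty$ with $R_{n+1} \geq 2R_n$ and $\psi(x) \geq n|x|$ whenever $|x| \geq R_n$, so that $h(r) \geq nr$ on $[R_n, +\infty)$. Pick a $C^1$ strictly increasing $\alpha : [0,+\infty) \to [0,+\infty)$ with $\alpha(0)=0$, $\alpha(r) \to +\infty$, and $\alpha(r) \leq n/4$ on $[0, R_{n+1}]$ for every $n$ (for instance, a $C^1$ smoothing of a piecewise linear interpolation with slow-growing values $\alpha(R_n) \sim n/8$ at the breakpoints, taking $R_1$ small enough). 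Then $\theta(r) := \int_0^r \alpha(s)\,\d s$ is strictly convex and $C^1$, with $\theta(0)=0$, $\theta'(0)=0$, and superlinear. For $r \in [R_n, R_{n+1}]$ one has $\theta(r) \leq r\alpha(r) \leq nr/4 \leq h(r)/2 \leq \psi(x)$, while on $[0, R_1]$ the residual is absorbed by the $1/2$ in \eqref{eq:38}. Doubling \eqref{eq:19} follows via \eqref{eq:21} from the geometric spacing $R_{n+1} \leq C R_n$, which keeps $r\theta'(r) \leq A(1+\theta(r))$ for large $r$.

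For part (ii), let $\phi$ be an admissible function provided by (i), so $\phi(|x|) - 1/2 \leq \psi(x)$. Introduce
\begin{equation*}
\psi_N(x) := \inf_{y \in U}\bigl(\psi(y) + N|x-y|\bigr),
\end{equation*}
which is convex, $N$-Lipschitz, nonnegative, satisfies $\psi_N \leq \psi_{N+1} \leq \psi$, and converges pointwise up to $\psi$ (under the natural lower semicontinuity hypothesis on $\psi$; otherwise one recovers its lsc envelope). Now set
\begin{equation*}
\psi^N(x) := \max\bigl(\psi_N(x),\, \phi(|x|) - 1\bigr).
\end{equation*}
Each $\psi^N$ is convex as a maximum of two convex functions, is bounded above by $\psi$ since both arguments are (the second by $\phi(|x|)-1 \leq \psi(x) - 1/2 \leq \psi(x)$), is monotone nondecreasing in $N$, and $\psi^N \uparrow \psi$ pointwise because $\psi^N \geq \psi_N \uparrow \psi$. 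Moderation with the fixed admissible $\phi$ then holds: the lower bound $\phi(|x|)-1 \leq \psi^N(x)$ is immediate, while using $\psi_N(x) \leq N|x|$ together with the superlinearity of $\phi$ (which yields $N|x| \leq \phi(|x|)$ for $|x|$ beyond some $R_N$, with the bounded region absorbed into a constant) gives $\psi^N(x) \leq C_N(1 + \phi(|x|))$.

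The delicate step is the calibration in part (i): reconciling strict convexity, $C^1$ regularity with $\theta'(0) = 0$, superlinearity, and doubling with the pointwise bound $\theta(|x|) - 1/2 \leq \psi(x)$ requires controlling the growth of $\alpha = \theta'$ simultaneously from above (to stay below $h + 1/2$) and from below (for strict convexity and superlinearity), while keeping the ratio $R_{n+1}/R_n$ bounded so that \eqref{eq:19} is satisfied. Each of these constraints is individually easy; their simultaneous realization needs a routine but careful construction.
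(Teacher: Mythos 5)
Part (ii) of your argument is correct and takes a genuinely different route from the paper: the paper inf-convolves $\psi$ with $N\theta_2(|x-y|)$ and recovers the lower moderation bound from the triangle inequality $\min(|x-y|,|y|)\ge|x|/2$ plus doubling, whereas you use the Lipschitz envelope $\inf_y(\psi(y)+N|x-y|)$ and restore the lower bound by taking the pointwise maximum with $\phi(|\cdot|)-1$. Your variant makes moderation essentially immediate, and the lsc caveat you flag for the pointwise convergence is equally present in the paper's version, so this is a fair trade.

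Part (i), however, contains a genuine gap precisely at the step you identify as delicate. You justify the doubling property \eqref{eq:19} by appealing to a ``geometric spacing $R_{n+1}\le CR_n$,'' but the $R_n$ are forced from below by how slowly $\psi$ becomes superlinear and cannot be kept geometrically spaced: for $\psi(x)=|x|\,\omega(|x|)$ with $\omega$ tending to infinity arbitrarily slowly, $R_n\sim\omega^{-1}(n)$ and $R_{n+1}/R_n\to\infty$. The correct mechanism — which your construction with $\alpha(R_n)\sim n/8$ and $R_{n+1}\ge 2R_n$ does in fact realize, but which you never invoke — is a doubling bound on the derivative itself, $\theta'(2r)\le C\,\theta'(r)$: since each interval $[R_n,R_{n+1}]$ spans at least one doubling of $r$ while $\alpha$ increases only by the bounded factor $(n+2)/n$ across two consecutive breakpoints, one gets $\alpha(2r)\le 3\alpha(r)$ for $r\ge R_1$, and integration yields \eqref{eq:19}. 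This is exactly the constraint the paper encodes explicitly through $a_{n+1}\le 2a_n$ on the dyadic partition; without stating it, your doubling claim rests on a false premise. A second, smaller error: you cannot ``take $R_1$ small enough,'' since $R_1$ must satisfy $\psi(x)\ge|x|$ for $|x|\ge R_1$ and is therefore bounded below by $\psi$; consequently $\theta(R_1)\le R_1\alpha(R_1)$ need not be $\le 1/2$ and \eqref{eq:38} can fail on $[0,R_1]$. This one is easily repaired by multiplying $\alpha$ by a small constant depending on $R_1$ (which preserves admissibility and only strengthens the minorant property), but as written the absorption into the $1/2$ is not justified.
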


\begin{proof}
  It is not restrictive to assume $\UU=\R^d$.\\
  \textbf{Claim 1.}   
  Let us set $h(r):=\min_{|x|\ge r}\psi(x)$ and $\bar n:=\min\big\{n\ge 0:h(2^n)\ge 1\big\}$,
  $\bar r:=2^{\bar n}$.
  The map $h:[0,+\infty)\to [0,+\infty]$
  is increasing, lower semicontinuous, and satisfies $\lim_{r\to\infty}h(r)/r=+
  \infty$.
  By a standard result of convex analysis (see e.g.~\cite[Lemma 3.7]{rossi2003tightness})
  there exists a convex superlinear function $k:[0,+\infty)\to [0,+\infty)$ such that 
  $h(r)\ge k(r)$ for every $r\in [0,+\infty)$ so that 
  $\psi(x)\ge k(|x|)$ for every $x\in \R^d$. 
  
  Let us define the sequence $(a_n)_{n\in \N}$ by induction:
  \begin{equation}
    \label{eq:23}
    \begin{aligned}
      a_n&:=0\quad\text{for every $n\in \N$, $n< \bar n$};\quad a_{\bar
        n}:=2^{-\bar n},\\
     a_{n+1}&:=\min \Big(2a_n,
      2^{-(n-1)}\big(k(2^n)-k(2^{n-1})\big)\Big) \quad\text{for every
      }n\ge \bar n.
    \end{aligned}
  \end{equation}
  Since $k$ is
  convex and increasing, the sequence $n\mapsto a_n$ is positive and increasing;
  since $k$ is superlinear, it is also easy to check that $\lim_{n\to\infty}a_n=+\infty$.
  
  We now consider the piecewise linear continuous function $\theta_1:[0,+\infty)\to[0,+\infty)$ 
  on the dyadic partition $\{0,2^0,2^1,2^2,\cdots, 2^n,\cdots\}$, $n\in \N$,
  satisfying
  \begin{equation}
    \label{eq:22}
    \theta_1(r)\equiv 0\quad \text{if }0\le r\le \bar r=2^{\bar n},\quad
    \theta_1'(r)=a_n\quad\text{if $2^n<r<2^{n+1}$},\quad
    n\in\N,\ n\ge \bar n.
  \end{equation}
  Since $\theta_1'\le k'$ a.e.~in $[0,+\infty)$ we have $\theta_1\le k$. 
  Moreover, by construction, for
  $0\le r\le 2\bar r$ we have 
  $\theta_1(r)\le \theta_1(2\bar r)=1$ and $\theta_1'(2r)\le 2 \theta_1'(r)$ if $r\ge\bar r$ so that 
  $\theta_1$ is also doubling since
  $$\theta_1(2 r)=\theta_1(2\bar r)+\int_{\bar r}^r 2\theta_1'(2s)\,\d s\le 
  1+4\int_{\bar r}^r \theta_1'(s)\,\d s=1+4\theta_1(r)\quad\text{for every }r\ge \bar r.$$ 
  Replacing now $\theta_1$ by the convex combination
  $\theta_2(r):=\frac 12\theta_1(r)+\frac 12 r/\bar r$ we get a strictly increasing function, still satisfying 
  \eqref{eq:38}.

  By possibly replacing $\theta_2$ with $\theta_3(r):=\int_{r-1}^r \theta(s)\,\d s$ (where we set $\theta_2(s)\equiv \theta_2(0)=0$
  whenever $s<0$) we obtain a $C^1$ function. 
  Strict convexity can be eventually obtained by taking the convex combination
  $\theta(r):=(1-\eps)\theta_3(r)+\eps(\sqrt{1+r^2}-1)$ for a sufficiently small $\eps>0$.

  \textbf{Claim 2.} 
  Notice that the function $x\mapsto \theta_2(|x|)$ is convex. 
  We can define $\psi^N$ by inf-convolution:
  \begin{equation}
    \label{eq:24}
    \psi^N(x):=\inf_{y\in \R^d} \psi(y)+N\theta_2(|x-y|),\quad
    x\in \R^d.
  \end{equation}
  It is easy to check that the infimum in \eqref{eq:24} is attained, $\psi^N$ is convex
  (since it is the inf-convolution of two convex functions) and satisfies the obvious bounds
  \begin{equation}
    \label{eq:25}
    \psi^N(x)\le N\theta_2(|x|),\quad \psi^N(x)\le \psi(x),\quad
    \psi^N(x)\le \psi^{N+1}(x)\quad
    \text{for every }x\in \R^d.
  \end{equation}
  In particular $\psi^N$ is continuous; since $x\mapsto \theta_2(x)$ is continuous at $x=0$
  and $\theta_2(|x|)\ge \frac 1{2\bar r}|x|$ we easily get 
  $\lim_{N\to\infty}\psi^N(x)=\psi(x)$ for every $x\in \R^d.$

  It remains to show that $\psi^N$ is moderated. 
  Since $\psi(x)\ge \theta_2(|x|)-1/2$ 
  and for every $y\in \R^d$ the triangle inequality yields $\min (|x-y|,|y|)\ge |x|/2$,
  we get
  \begin{equation}
    \label{eq:26}
    \psi^N(x)+1/2\ge \inf_{y\in \R^d} \theta_2(|y|)+N\theta_2(|x-y|)\ge \theta_2(|x|/2)\ge 
    \frac 14\theta_2(|x|)-\frac 14
  \end{equation}
  and the bounds
  \begin{equation}
    \label{eq:27}
    \frac 14\theta(|x|)-\frac 34\le \psi^N(x)\le 4N\frac14\theta_2(|x|).
  \end{equation}
  By possibly replacing $\theta_2$ with $\theta$ we conclude.
  \end{proof}
Let us make explicit two simple applications of the properties of Definition \ref{def:Young}.

\begin{remark}
  \label{rem:moment-reinforcement}
  \upshape
  If $\cK\subset \cP_1(\R^d)$ is a relatively compact set and $\psi:\UU\to[0,+\infty]$ is 
  a superlinear funcion defined in a subspace $U$ of $\R^d$ with $\psi(0)=0$,
  then there exists an admissible 
  function $\theta:[0,+\infty)\to [0,+\infty)$ such that 
  \begin{equation}
    \label{eq:42}
    \sup_{\mu\in \cK}\int_{\R^d}\theta(|x|)\,\d\mu(x)<\infty,\quad 
    \theta(|x|)\le 1+\psi(x)\quad\text{for every }x\in \UU.
  \end{equation}
  In fact, Prokhorov theorem yields the tightness 
  of the set $\tilde \cK:=\{|x|\mu:\mu\in \cK\}$ of finite measures, so that we can find a superlinear function $\alpha:\R^d\to[0,\infty)$ 
  such that 
   \begin{equation}
    \label{eq:42bis}
    \sup_{\mu\in \cK}\int_{\R^d}\alpha(x)\,\d\mu(x)<\infty.
  \end{equation}
  We can then apply the first statement of Lemma \ref{le:approximation} 
  with superlinear function $\alpha\land \psi$.
\end{remark}

\begin{lemma}
  \label{le:convergence}
  Let $\zeta:\R^d\to[0,+\infty)$ be a moderated convex function with $\zeta(0)=0$ and let $\mu^i_n\in\cP_1(\R^d)$, $i=0,1$, 
  be two sequences
  converging to $\mu$ in $\cP_1(\R^d)$ and let $\gamma_n$ be the optimal plan attaining the minimum in 
  \eqref{eq:37} for $W_1(\mu^0_n,\mu^1_n)$. If
  \begin{equation}
    \label{eq:43}
    \limsup_{n\to\infty}\int\zeta\,\d\mu^i_n\le 
    \int\zeta\,\d\mu
  \end{equation}
  then 
  \begin{equation}
    \label{eq:44}
    \lim_{n\to\infty}\int\zeta(y-x)\,\d\gamma_n(x,y)=0,\quad
    \lim_{n\to\infty}\calC_\zeta(\mu_n^0,\mu_n^1)=0.
  \end{equation}
\end{lemma}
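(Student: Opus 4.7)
My plan: the second identity in \eqref{eq:44} will follow for free once the first is established, since the $W_1$-optimal plan $\gamma_n$ is admissible in \eqref{eq:36}, giving $\calC_\zeta(\mu^0_n,\mu^1_n)\le \int \zeta(y-x)\,\d\gamma_n$. So the task reduces to showing $\int\zeta(y-x)\,\d\gamma_n\to 0$, which I would establish in three steps: (i) identify the weak limit of $\gamma_n$ as the diagonal plan $\gamma_\star:=(\mathrm{id},\mathrm{id})_\sharp\mu$; (ii) exploit \eqref{eq:43} together with the doubling structure of $\zeta$ to prove equi-integrability of $\zeta(y-x)$ with respect to $\{\gamma_n\}$; (iii) conclude by a Vitali-type criterion.

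For step (i), since $\mu^0_n,\mu^1_n\to\mu$ in $W_1$, the triangle inequality yields $W_1(\mu^0_n,\mu^1_n)\to 0$, and the marginals of $\gamma_n$ form a tight family so $\{\gamma_n\}$ is tight. Any weak subsequential limit $\gamma$ has both marginals equal to $\mu$ and, by lower semicontinuity of $\gamma\mapsto \int|y-x|\,\d\gamma$, satisfies $\int|y-x|\,\d\gamma \le \liminf_n W_1(\mu^0_n,\mu^1_n)=0$; thus $\gamma$ is concentrated on the diagonal and coincides with $\gamma_\star$, and by uniqueness the whole sequence converges weakly to $\gamma_\star$.

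For step (ii), combining \eqref{eq:43} with the weak lower semicontinuity of $\nu\mapsto \int\zeta\,\d\nu$ yields $\int\zeta\,\d\mu^i_n \to \int\zeta\,\d\mu$, which I would take as finite (otherwise \eqref{eq:43} is vacuous and the statement is applied only when the limit is finite). A standard truncation argument using continuous cut-offs of $\zeta\land k$ then gives equi-integrability of $\zeta$ with respect to both $\{\mu^0_n\}$ and $\{\mu^1_n\}$. Combining \eqref{hp:phi}, the doubling property \eqref{eq:19}, and $|y-x|\le 2\max(|x|,|y|)$ produces a constant $C'>0$ with
\begin{equation*}
  \zeta(y-x)\le C'\bigl(1+\zeta(x)+\zeta(y)\bigr)\quad\text{for all }x,y\in\R^d.
\end{equation*}
Since the marginals of $\gamma_n$ are $\mu^0_n,\mu^1_n$, the coordinate functions $(x,y)\mapsto\zeta(x)$ and $(x,y)\mapsto\zeta(y)$ are equi-integrable with respect to $\{\gamma_n\}$ and $\int\zeta(y-x)\,\d\gamma_n$ is uniformly bounded; hence by Markov's inequality $\gamma_n(\{\zeta(y-x)\ge M\})=O(1/M)$, and the pointwise bound above transfers equi-integrability to $\zeta(y-x)$.

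For step (iii), weak convergence $\gamma_n\to\gamma_\star$ combined with equi-integrability of the continuous, nonnegative function $\zeta(y-x)$ yields $\int\zeta(y-x)\,\d\gamma_n\to\int\zeta(y-x)\,\d\gamma_\star=\zeta(0)=0$. I expect the main obstacle to be step (ii): equi-integrability of $\zeta$ with respect to each marginal does not automatically pass to the joint plan, and the pointwise dominating bound furnished by the doubling property is precisely what bridges the two.
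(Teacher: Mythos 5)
Your proposal is correct and follows essentially the same route as the paper: identify the diagonal plan $(\mathrm{id},\mathrm{id})_\sharp\mu$ as the weak limit of $\gamma_n$, use the moderated/doubling structure to dominate $\zeta(y-x)$ by $C'\bigl(1+\zeta(x)+\zeta(y)\bigr)$, deduce uniform integrability of $\zeta(y-x)$ with respect to $\{\gamma_n\}$, and conclude from weak convergence plus uniform integrability. The only cosmetic difference is that the paper channels the domination through the auxiliary admissible function $\phi$ of \eqref{hp:phi} and gets uniform integrability of $\phi(|x|)+\phi(|y|)$ from the convergence of its integrals against $\gamma_n$, whereas you transfer it from the marginals via Markov's inequality; both mechanisms are standard and equivalent here.
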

\begin{proof}
  Let $\phi$ be an admissible function satisfying \eqref{hp:phi}
  for $\psi:=\zeta$.
  We observe that for every $x,y\in \R^d$
  \begin{equation}
    \label{eq:45}
    \phi(|y-x|)\le \phi(|x|+|y|)
    \le K\Big(1+\phi(\tfrac 12|x|+\tfrac12|y|)\Big)
    \le K\Big(1+\phi(|x|)+\phi(|y|)\Big).
  \end{equation}
Inequality  \eqref{eq:43} shows that $\zeta$ is uniformly integrable w.r.t.~$\mu_n$ 
  (see \cite[Lemma 5.1.7]{Ambrosio2008gradient}) so that 
  \begin{equation}
    \label{eq:46}
    \lim_{n\to\infty}\int\phi(|x|)\,\d\mu^i_n(x)=
    \int\phi(|x|)\,\d\mu(x), \qquad i=1,2,
  \end{equation}
 whence 
  \begin{equation}
    \label{eq:47}
    \lim_{n\to\infty}\int\Big(\phi(|x|)+\phi(|y|)\Big)\,\d\gamma_n(x,y)=
    2\int\phi(|x|)\,\d\mu(x)=
    \int\Big(\phi(|x|)+\phi(|y|)\Big)\,\d\gamma(x,y)
  \end{equation}
  where $    \gamma:=(x,x)_\sharp \mu$ is the weak limit of $\gamma_n$.
  It follows that the function $(x,y)\mapsto \phi(|x|)+\phi(|y|)$ is 
  uniformly integrable with respect to $\gamma_n$ 
  so that, by \eqref{eq:45} and 
  \cite[Lemma 5.1.7]{Ambrosio2008gradient}
  \begin{equation}
    \label{eq:48}
    \lim_{n\to\infty}\int \phi(|y-x|)\,\d\gamma_n(x,y)=
    \int \phi(|y-x|)\,\d\gamma(x,y)=0.
  \end{equation}
  Since $\zeta(y-x)\le C(1+\phi(|y-x|))$ by \eqref{hp:phi} we get \eqref{eq:44}.
\end{proof}
\nc

\subsection{Convex functionals on measures}
\label{appendix:convex_funct_meas}

We are concerned with the main properties of functionals defined on measures, for a detailed treatment of this subject we refer to \cite{ambrosio2000functions}.
Let $\psi: \R^h \to [0,+\infty]$ be a proper, l.s.c., convex and superlinear function, so that  
its  recession function $\sup_{r >0} \frac{\psi(rx)}{r}=\infty$
for all $x \neq 0$; we will also assume $\psi(0)=0$.

Let now $\Omega$ be an open subset of some Euclidean space
$\mu \in \cM^+(\Omega)$ be a reference measure and
$\nnu \in \cM(\Omega;\R^h)$ a vector measure; 
\nc we define the following functional
\begin{equation}
\label{eq:74}
\Psi(\nnu |\mu) := \int_{\Omega} \psi(\vv(x))\,\d\mu(x) \quad
\text{if }\nnu = \vv \mu\ll\mu,\quad
\Psi(\nnu |\mu):=+\infty\quad\text{if }\nnu\not\ll\mu.
\end{equation}
We state the main lower semicontinuity result 
for the functional $\Psi$.
\begin{theorem}\label{t.funct:meas:lsc}
Suppose that we have two sequences $\mu_n \in \cM^+(\Omega)$, 
$\nnu_n \in \cM(\Omega;\R^h)$ weakly  converging to 
$\mu \in \cM^+(\Omega)$ and $\nnu \in \cM(\Omega,\R^h)$, respectively. 
Then 
\[ \liminf_{n \to+ \infty} \Psi(\nnu_n | \mu_n) \geq \Psi(\nnu |\mu). \]
\GGG
In particular, if $\liminf_{n \to+ \infty} \Psi(\nnu_n | \mu_n) <+\infty$, 
we have $\nnu\ll\mu$.
\end{theorem}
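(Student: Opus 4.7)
The plan is to express $\Psi$ as a supremum over a family of functionals that are jointly continuous under weak convergence of $(\mu_n,\nnu_n)$, so that lower semicontinuity falls out of the general principle that the supremum of a family of continuous functions is lower semicontinuous. The tool making this possible is convex duality. Since $\psi$ is proper, convex, lower semicontinuous, superlinear and satisfies $\psi(0)=0$, its Legendre conjugate $\psi^{*}\colon \R^h\to[0,+\infty)$ is finite-valued (this is in fact equivalent to the superlinearity of $\psi$), continuous, and satisfies $\psi^{*}(0)=0$. Moreover the recession function of $\psi$ equals $+\infty$ on $\R^h\setminus\{0\}$, which is the abstract reason why the effective domain of $\Psi$ only contains vector measures $\nnu\ll\mu$.

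The central step is to invoke the dual representation
\[
\Psi(\nnu\mid\mu) \;=\; \sup_{w\in C_c(\Omega;\R^h)} \left\{ \int_\Omega w(x)\cdot \d\nnu(x) \,-\, \int_\Omega \psi^{*}(w(x))\,\d\mu(x) \right\},
\]
which is a classical fact about convex functionals on measures; it can be taken directly from \cite{ambrosio2000functions} (e.g.\ Theorem~2.34 and the ensuing discussion of the case with trivial recession function). The observation that turns this into a lower semicontinuity statement is that, for every fixed $w\in C_c(\Omega;\R^h)$, the composition $\psi^{*}\circ w$ is continuous on $\Omega$ with $\supp(\psi^{*}\circ w)\subset\supp(w)$ (because $\psi^{*}(0)=0$), hence belongs to $C_c(\Omega)$. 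Therefore both maps $\nnu\mapsto\int w\cdot\d\nnu$ and $\mu\mapsto\int \psi^{*}(w)\,\d\mu$ are continuous under weak convergence of measures, and so is their difference $\Phi_w(\mu,\nnu)$.

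Putting these two ingredients together, for every admissible $w$,
\[
\Phi_w(\mu,\nnu)\;=\;\lim_{n\to\infty}\Phi_w(\mu_n,\nnu_n)\;\le\;\liminf_{n\to\infty}\Psi(\nnu_n\mid\mu_n),
\]
and taking the supremum over $w$ yields $\Psi(\nnu\mid\mu)\le\liminf_n\Psi(\nnu_n\mid\mu_n)$. The final addendum is then for free: if $\liminf_n \Psi(\nnu_n\mid\mu_n)<+\infty$, the lower semicontinuity just established forces $\Psi(\nnu\mid\mu)<+\infty$, which by the very definition in \eqref{eq:74} requires $\nnu\ll\mu$. The main technical obstacle in this scheme is the rigorous justification of the dual representation — specifically the reduction of the supremum to $C_c$ test functions — which rests on a measurable selection and approximation argument exploiting the finiteness and continuity of $\psi^{*}$ (and, in essence, the vanishing of the recession part); this is precisely the content of the cited results in \cite{ambrosio2000functions}, and once it is granted the remainder of the argument is routine.
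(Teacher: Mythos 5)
Your proof is correct: the duality representation $\Psi(\nnu|\mu)=\sup_{w\in C_c}\bigl(\int w\cdot\d\nnu-\int\psi^*(w)\,\d\mu\bigr)$, combined with the finiteness and continuity of $\psi^*$ (guaranteed by superlinearity) and the lower semicontinuity of suprema of weakly continuous functionals, is exactly the standard argument behind the result the paper simply cites (\cite{Ambrosio2008gradient}, Lemma 9.4.3). Since the paper gives no proof beyond that citation, your write-up is essentially the same approach, with the only deferred point being the dual representation itself, which you correctly attribute to the literature.
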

The proof can be found in \cite{Ambrosio2008gradient}, Lemma 9.4.3.

\section{The optimal control problem and main results}
\label{intro:problem}

\paragraph{Cost functional.}

{\MF Assume that we are given}
a sequence of functions $L^N:\R^d\times (\R^d)^N\to [0,+\infty)$, $N \in \N$, and a function  $L:\R^d\times \cP_1(\R^d) \to [0,+\infty)$ such that $L^N$ is continuous and symmetric
for every $N\in\N$ and $L$ is continuous.  
We assume that 
\begin{equation}\label{hp:Lconv}
\mbox{$L^N$ $\cP_1$-converges to $L$ uniformly on compact sets, as $N\to \infty$, }
\end{equation}
in the sense of Definition \ref{d:convergence}.
\\
{\MF Assume that we are given} 
\GGG 
\begin{equation}
  \begin{gathered}
    \text{a subspace $U\subset \R^d$ and a moderated convex function
      $\psi: U \to [0,+\infty)$ with $\psi(0)=0$}.
  \end{gathered}
\label{eq:40}
\end{equation}
We will also fix an auxiliary function $\phi$ satisfying \eqref{hp:phi}.

\MF Typical examples we consider for $\psi$  include
\begin{itemize} 
\item $\psi(x)=\frac 1p|x|^p, \quad p>1$;
\item 
\GGG
$\psi(x)= \frac1p|x|$ for $|x| \leq 1$ and $\psi(x) = \frac1p|x|^p $ for $|x| > 1$, $ p>1.$
\end{itemize}
\nc 
Denoting by $U^N$ the Cartesian product, we define a cost functional 
$\cE^N: AC([0,T];(\R^d)^N) \times L^1([0,T]; \UU ^N) \to [0,+\infty)$
by
\begin{equation}
  \label{finite_functional}
    \cE^N(\xx,\uu):=\media_0^T \frac 1N\sum^N_{i=1} L^N(x_i(t),\xx(t))\,\d t+
  \media_0^T \frac 1N\sum^N_{i=1}
  \psi (u_i(t))\,\d t.
\end{equation}
{\MF We consider also another cost functional 
 $\cE: AC([0,T];\cP_1(\R^d)) \times \cM([0,T] \times \R^d; \UU ) \to [0,+\infty)$
  defined by}
\GGG (recall \eqref{eq:28})
 \begin{equation}
  \label{infinite_functional}
  \cE(\mu,\nnu):=\media_0^T \int_{\R^d} L(x,\mu_t)\,\d\mu_t(x)\,\d t + 
  \GGG
  \Psi(\nnu|\tilde\mu),
\end{equation}
\GGG
where $\Psi$ is defined as in \eqref{eq:74}.
Notice that if $\Psi(\nnu|\tilde\mu)<\infty$ then 
$\nnu=\vv\mu$ for a Borel vector field $\vv\in L^1_{\tilde\mu}([0,T]\times \R^d;\UU )$ 
so that for $\lambda$-a.e.~$t\in [0,T]$ the measure
$\nnu_t:=\vv(t,\cdot)\mu_t$ belongs to $\cM(\R^d;\UU )$ and we can write 
\begin{equation}\label{eq:defPsi}
  \Psi(\nnu |\tilde\mu) =
  \int_{[0,T]\times \R^d}\psi(\vv(t,x))\,\d\tilde\mu(t,x)=
  \media_0^T \int_{\R^d}\psi(\vv(t,x))\,\d\mu_t\,\d t=
  \media_0^T \Psi(\nnu_t|\mu_t)\,\d t.
\end{equation}
\nc
{\MF We shall prove below that the functional $\cE$ is 
the $\Gamma$-limit of $\cE^N$ in suitable sense \cite{dalmaso1993introduction}.}

\paragraph{The constraints (State equations).}

{\MF Assume that we are given} a sequence of functions $\fF^N: \R^d \times (\R^d)^N \to \R^d$, $N\in\N$, symmetric and continuous
and a continuous function $\fF:\R^d\times \cP_1(\R^d)\to \R^d$.
We assume that there exist constants $A,B\geq 0$
such that
\begin{equation}\label{hp:linear}
 |\fF^N(x,\yy)|\le A+B(|x|+|\yy|_N),\qquad\qquad |\fF(x,\mu)|\le A+B\Big(|x|+\int_{\Rd} |y|\,\d\mu(y)\Big),
 \end{equation}
\GGG and $\fF^N$, $\fF$ and $\UU$ satisfy the \emph{compatibility} condition
\begin{equation}
  \label{eq:65}
  \fF^N(x,\yy)-\fF(x,\mu)\in \UU\quad
  \text{for every }x\in \R^d,\ \yy\in (\R^d)^N,\ \mu\in \cP_1(\R^d).
\end{equation}
Moreover, we assume that 
\begin{equation}\label{hp:Fconv}
\mbox{$\fF^N$ $\cP_1$-converges to $\fF$ uniformly on compact sets, as $N \to \infty$,}
\end{equation}
in the sense of Definition \ref{d:convergence}.

Given $\uu=(u_1, \ldots, u_N)\in L^1([0,T];\UU^N)$, a control map, 
we consider the system of differential equations
\begin{equation}\label{eq:state}
  \dot x_i(t)=\fF^N(x_i(t),\xx(t))+ u_i(t),\quad i=1,\cdots, N.
\end{equation}
{\MF The map $\fF^N:\R^d\times (\R^d)^N\to \R^d$ models the interaction between the agents  and 
$\uu$ represents the action of an external controller on the system. }
For every $\uu\in L^1([0,T];\UU^N)$ and $\xx_0\in(\R^d)^N$, thanks to \eqref{hp:linear} and the continuity of $\fF^N$,
there exists a global solution, in the Carath\'eodory sense,  $\xx\in AC([0,T];(\R^d)^N)$ of \eqref{eq:state} such that $ \xx(0)=\xx_0$. 
Since we have assumed only the continuity of the velocity field $\fF^N$, uniqueness of solutions is not guaranteed in general.
We then define the non empty set 
\[
	\AA^N:=\{(\xx,\uu)\in AC([0,T];(\R^d)^N\times L^1([0,T];\UU^N):  \mbox{ $\xx$ and $\uu$  satisfy \eqref{eq:state}, $\cE^N(\xx,\uu)<\infty$} \}.
\]
Moreover we also define for every $\xx_0\in (\R^d)^N$ the non empty set
\[
	\AA^N(\xx_0):=\{(\xx,\uu)\in\AA^N : \xx(0)=\xx_0\}.
\]
{\GGG Every initial vector $\xx_0=(x_{0,1},\cdots,x_{0,N})\in (\R^d)^N$ gives raise to 
  the empirical distribution
  \begin{equation}
    \label{eq:33}
    \mu_0=\mu[\xx_0]:=\frac 1N\sum_{i=1}^N \delta_{x_{0,i}}.
  \end{equation}
  Similarly, every curve $\xx\in AC([0,T];(\R^d)^N)$
  is associated to the curve of probability measures
  \begin{equation}
    \label{eq:32}
   \mu=\mu[\xx]\in AC([0,T];\cP_1(\R^d)):
   \qquad\mu_t:=\frac 1N\sum_{i=1}^N\delta_{x_i(t)},\quad t\in [0,T],
  \end{equation}
  and every pair $(\xx,\uu)\in AC([0,T];(\R^d)^N)\times  L^1([0,T];\UU^N)$
  is linked to the \emph{control} vector measure 
\begin{equation}
  \label{eq:30}
  \nnu=\nnu[\xx,\uu]\in \cM([0,T]\times\R^d;\UU ):\quad
  \nnu:=\int_0^T \delta_t\otimes \nnu_t\,\d\lambda,\quad
   \nnu_t:=\frac{1}{N} \sum_{i=1}^N u_i(t)\delta_{x_i(t)}.
\end{equation}
We will show that 
for every choice of solutions and controls $(\xx^N,\uu^N)\in \AA^N(\xx^N_0)$ 
such that 
the cost functional $\cE^N(\xx^N,\uu^N)$ remains uniformly bounded
and the initial empirical distributions $\mu_0^N=\mu[\xx_0^N]$ 
is converging to a limit measure $\mu_0$ in $\cP_1(\R^d)$
a mean-field approximation holds: 

\begin{theorem}[Compactness]
  \label{thm:compactness}
  Let $(\xx^N_0)_{N\in \N}$ be a sequence of initial data in
  $(\R^d)^N$ such that the empirical measure
  $\mu_0^N=\mu[\xx^N_0]$ converges to a
  probability measure $\mu_0 $ in $\cP_1(\R^d)$ as
  $N\to \infty$, and let $(\xx^N,\uu^N)\in \AA^N(\xx^N_0)$ 
such that 
the cost functional $\cE^N(\xx^N,\uu^N)$ remains uniformly bounded.
Up to extraction of a suitable subsequence, the
  empirical measures $\mu^N=\mu[\xx^N]$
  converge uniformly in $\cP_1(\R^d)$ to a curve of probability
  measures $\mu\in AC([0,T];\cP_1(\R^d))$, the control measures
  $\nnu^N=\nnu[\xx^N,\uu^N]$ converge to a limit control measure
  $\nnu$ weakly$^*$ in $\cM([0,T] \times \R^d; \UU )$, and $(\mu,\nnu)$ fulfills the
  continuity equation
  \begin{equation}\label{eq:vlasov}
    \partial_t \mu_t+\nabla\cdot\Big(\fF(x,\mu_t)\mu_t+\nnu_t\Big)=0
    \quad\text{in }(0,T)\times\R^d
  \end{equation}
  in the sense of distributions.
\end{theorem}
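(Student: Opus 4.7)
The argument proceeds in three stages: uniform a priori bounds, extraction of limits by Ascoli--Arzel\`a and Banach--Alaoglu, and passage to the limit in the continuity equation.

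\textbf{A priori bounds.} From $\sup_N \cE^N(\xx^N,\uu^N)<\infty$ together with Lemma~\ref{le:approximation}(1), which provides an admissible $\phi$ with $\psi(u)\ge\phi(|u|)-1/2$, one obtains
\[
  \sup_N\media_0^T\frac1N\sum_{i=1}^N\phi(|u_i^N(t)|)\,dt<\infty.
\]
Since $\mu_0^N\to\mu_0$ in $\cP_1(\R^d)$, Remark~\ref{rem:moment-reinforcement} furnishes an admissible $\theta$ (which I may assume dominated by $\phi$ up to an additive constant) with $\sup_N\int\theta(|x|)\,d\mu_0^N<\infty$. Coupling the linear growth of $\fF^N$ in \eqref{hp:linear} with a trajectory-wise Gr\"onwall argument and exploiting the doubling property of $\theta$ propagates this bound to
\[
  \sup_{N,\,t\in[0,T]}\int\theta(|x|)\,d\mu_t^N(x)<\infty,
\]
which yields uniform $\cP_1$-tightness of $\{\mu_t^N\}_{N,t}$. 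Young's inequality $|u|\le\varepsilon\phi(|u|)+C_\varepsilon$ then bounds the total variation $|\nnu^N|([0,T]\times\R^d)\le\media_0^T\tfrac1N\sum_i|u_i^N|\,dt$ uniformly.

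\textbf{Extraction of limits.} For $0\le s<t\le T$ the transport plan $\tfrac1N\sum_i\delta_{(x_i^N(s),x_i^N(t))}\in\Pi(\mu_s^N,\mu_t^N)$ gives
\[
  W_1(\mu_s^N,\mu_t^N)\le\int_s^t\Big(A+2BM+\tfrac1N\sum_i|u_i^N(\tau)|\Big)d\tau,\qquad M:=\sup_{N,t}\int|x|\,d\mu_t^N,
\]
and the integrand is uniformly integrable on $[0,T]$ by the uniform $\phi$-bound on the controls together with superlinearity of $\phi$. This produces equicontinuity of $\{\mu^N\}$ in $(\cP_1(\R^d),W_1)$; combined with pointwise tightness, Ascoli--Arzel\`a extracts a subsequence with $\mu^N\to\mu$ uniformly in $\cP_1$, and $\mu\in AC([0,T];\cP_1(\R^d))$. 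For the control measures, a uniform-in-$R$ tail estimate
\[
  |\nnu^N|([0,T]\times\{|x|>R\})\le\varepsilon\,\media_0^T\tfrac1N\sum_i\phi(|u_i^N|)\,dt+C_\varepsilon\media_0^T\mu_t^N(\{|x|>R\})\,dt,
\]
combined with the previous $\cP_1$-tightness, yields uniform tightness of $\{\nnu^N\}$ on $[0,T]\times\R^d$; Banach--Alaoglu then delivers a weak$^*$ limit $\nnu\in\cM([0,T]\times\R^d;\UU)$.

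\textbf{Continuity equation and main obstacle.} Each pair $(\mu^N,\nnu^N)$ satisfies $\partial_t\mu_t^N+\nabla\cdot(\fF^N(\cdot,\mu_t^N)\mu_t^N+\nnu_t^N)=0$ in the distributional sense, as one checks by differentiating $t\mapsto\tfrac1N\sum_i\varphi(x_i^N(t))$ along \eqref{eq:state} for $\varphi\in C_c^\infty(\R^d)$. Testing against $\varphi\in C_c^\infty([0,T)\times\R^d)$: the time-derivative term passes using uniform $\cP_1$-convergence of $\mu^N$, and the control term using weak$^*$ convergence of $\nnu^N$. The delicate step---and the main obstacle---is the nonlinear drift $\int_0^T\!\int\nabla\varphi\cdot\fF^N(x,\mu_t^N)\,d\mu_t^N\,dt$: here one couples the uniform-on-compacts $\cP_1$-convergence of $\fF^N\to\fF$ from \eqref{hp:Fconv} with the pointwise-in-$t$ convergence $\mu_t^N\to\mu_t$ in $\cP_1$ to secure uniform convergence of $\fF^N(\cdot,\mu_t^N)\to\fF(\cdot,\mu_t)$ on $\supp\nabla\varphi$ for each $t$; the uniform linear bound \eqref{hp:linear} together with the uniform first-moment bound on $\mu_t^N$ provides an integrable majorant, and dominated convergence concludes.
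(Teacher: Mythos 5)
Your proof is correct and follows essentially the same strategy as the paper's: uniform $\theta$-moment bounds obtained by combining Remark \ref{rem:moment-reinforcement} with the momentum estimate, $W_1$-equicontinuity from the superlinear control bound plus Ascoli--Arzel\`a, compactness of the control measures, and passage to the limit in the continuity equation via \eqref{hp:Fconv}. The only notable divergence is the compactness step for $\nnu^N$, where you use a total-variation bound plus tightness and Banach--Alaoglu, while the paper invokes the argument of \cite[Th.~5.4.4]{Ambrosio2008gradient}, which additionally yields the representation $\nnu=\vv\mu$ with $\Psi(\nnu|\tilde\mu)<\infty$ (used afterwards to place the limit in $\AA$); for the statement as given, your version suffices.
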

\nc
Motivated by the above result, we define the non empty set
\begin{align*}
	\AA:=\Big\{&(\mu,\nnu)\in AC([0,T];\cP_1(\R^d))\times \cM([0,T] \times \R^d; \UU ) :\\
  &\mbox{ $\mu$ and $\nnu$  satisfy \eqref{eq:vlasov} in the sense of distributions},\ 
    \cE(\mu,\nnu)<\infty\Big\},
\end{align*}
and its corresponding subset associated to a given initial measure 
$\mu_0\in\cP_1(\R^d)$:
\[
	\AA(\mu_0):=\{(\mu,\nnu)\in \AA : \mu(0)=\mu_0\}.
\]

{\MF The elements of $\AA^N$ can be interpreted as the trajectories $(x_1, \ldots, x_N)$ of $N$ agents along with their strategies $(u_1, \ldots, u_N)$,
whose dynamics is described by the system of ODEs \eqref{eq:state}. Analogously, the elements of $\AA$ can be interpreted as the trajectories of a continuous or discrete distribution of agents
whose dynamics is described by the PDE \eqref{eq:vlasov} 
{\GGG under} the action of an external controller described by the measure $\nnu$.}
\paragraph{The minimum problems.}

The objective of the controller is to minimize the cost functional $\cE^N$ (resp. $\cE$). 
{\MF We consider the following optimum sets, defined by corresponding optimal control problems}:
\begin{align}
  E^N(\xx_0):=&
  \min_{ (\sxx,\suu) \in\AA^N(\sxx_0)}\cE^N(\xx,\uu),
                      &
  P^N(\xx_0):=  {}&\argmin\{\cE^N(\xx,\uu):  (\xx,\uu) \in\AA^N(\xx_0) \}, \label{minN0}\\
E(\mu_0):=&
  \min_{ (\mu,\snnu) \in\AA(\mu_0)}\cE(\mu,\nnu),&
	 P(\mu_0):= {}& \argmin\{\cE(\mu,\nnu):  (\mu,\nnu) \in\AA(\mu_0) \},\label{min0}
\end{align}
\GGG 
where we suppose that $\mu_0\in D(E):=\{\mu\in \cP_1(\R^d):\AA(\mu)\text{ is not empty}\}.$ \nc

We are interested in the rigorous justification of the convergence of the control problem \eqref{minN0}
towards the corresponding infinite dimensional one \eqref{min0}.

\paragraph{Main results.}

{\MF We state now more formally our main result} concerning
the sequence of functionals $\cE^N$ to $\cE$, inspired to $\Gamma$-convergence.
\begin{theorem}[$\Gamma$-convergence]
  \label{th:main}
	The following properties hold:
	\begin{itemize}
\item \textbf{$\Gamma-\liminf$ inequality}: for every $(\mu,\nnu)\in AC([0,T];\cP_1(\R^d)) \times \cM([0,T] \times \R^d; \UU )$ 
and every  sequence $(\xx^N,\uu^N)\in AC([0,T];(\R^d)^N) \times L^1([0,T]; \UU^N)$ 
such that $\mu[\xx^N] \to \mu \text{ in } C([0,T];\cP_1(\R^d))$, $\nnu[\xx^N,\uu^N] 
\rightharpoonup^* \nnu$ in $\cM([0,T] \times \R^d; \UU )$,  we have
\begin{equation}\label{gammaliminf}
\liminf_{N \to \infty} \cE^N(\xx^N,\uu^N)\geq\cE(\mu,\nnu).
\end{equation}
\item \textbf{$\Gamma-\limsup$ inequality}: 
for every $(\mu,\nnu)\in \AA$ 
such that 
\begin{equation}
\GGG
\int_{\R^d}\phi(|x|)\,\d\mu_0(x)<\infty\label{eq:61}
\end{equation}
there exists a sequence  $(\xx^N, \uu^N)\in\AA^N$  
\GGG with $x_{0,i}^N\in \supp(\mu_0)$ for every $i=1,\cdots, N$, \nc
such that 
\begin{gather}
\label{eq:64}
\mu[\xx^N]\to \mu \text{ in } C([0,T];\cP_1(\R^d)),\quad
\nnu[\xx^N,\uu^N]\rightharpoonup^* \nnu 
\text{ in }\cM([0,T] \times \R^d; \UU ),\\
\GGG
\lim_{N\to\infty}\frac1N\sum_{i=1}^N\phi(|x_{0,i}^N|)=
\int_{\R^d}\phi(|x|)\,\d\mu_0(x),
\label{eq:63}
\end{gather}
and
\begin{equation}\label{limsup}
\limsup_{N \to \infty} \cE^N(\xx^N,\uu^N) \leq \cE(\mu,\nnu).
\end{equation}
\end{itemize}
\end{theorem}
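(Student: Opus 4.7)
The plan is to split $\cE^N$ into the interaction term and the control term and handle them separately. For the interaction cost, set $\ell^N(t,x):=L^N(x,\mu^N_t)$, $\ell(t,x):=L(x,\mu_t)$, so that $\media_0^T N^{-1}\sum_i L^N(x_i(t),\xx^N(t))\,\d t=\int\ell^N\,\d\tilde\mu^N$. Since $\mu[\xx^N]\to\mu$ in $C([0,T];\cP_1(\R^d))$, the parametrised measure $\tilde\mu^N$ converges narrowly to $\tilde\mu$; the $\cP_1$-convergence of $L^N$ uniformly on compacts combined with the uniform-in-$t$ convergence of $\mu^N_t$ to $\mu_t$ yields, via a standard compactness-contradiction argument, that $\ell^N\to\ell$ locally uniformly on $[0,T]\times\R^d$. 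A cutoff $\chi_R\in C_b$ with $0\le\chi_R\le 1$ supported in $[0,T]\times B_R$ then gives
\begin{equation*}
\int\ell^N\,\d\tilde\mu^N\ge \int\chi_R\,\ell\,\d\tilde\mu^N-\sup_{[0,T]\times B_R}|\ell^N-\ell|;
\end{equation*}
passing to the liminf in $N$ and letting $\chi_R\uparrow 1$ by monotone convergence handles the interaction term. For the control cost, the Radon--Nikodym density of $\nnu^N_t$ with respect to $\mu^N_t$ averages the $u_i(t)$ over coincident particles, and Jensen's inequality gives $\Psi(\nnu^N|\tilde\mu^N)\le\media_0^T N^{-1}\sum_i\psi(u_i(t))\,\d t$. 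Theorem \ref{t.funct:meas:lsc} then concludes.

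\textbf{$\Gamma$-limsup.} The plan rests on the superposition principle: given $(\mu,\nnu)\in\AA$ with $\nnu=\vv\tilde\mu$, the continuity equation \eqref{eq:vlasov} admits a lift $\eta\in\cP(C([0,T];\R^d))$ concentrated on absolutely continuous integral curves of the non-autonomous field $\fF(\cdot,\mu_t)+\vv(t,\cdot)$, with $(e_t)_\sharp\eta=\mu_t$ for every $t\in[0,T]$. One then approximates $\eta$ by empirical measures $\eta^N:=N^{-1}\sum_{i=1}^N\delta_{\gamma_i^N}$, with $\gamma_i^N(0)\in\supp\mu_0$, in such a way that $\eta^N$ converges narrowly to $\eta$ and, simultaneously,
\begin{equation*}
\frac1N\sum_{i=1}^N F_j(\gamma_i^N)\longrightarrow\int F_j\,\d\eta \qquad(N\to\infty)
\end{equation*}
for a countable class of $\eta$-integrable functionals $\{F_j\}$ including $\phi(|\gamma(0)|)$, $\int_0^T\psi(\vv(t,\gamma(t)))\,\d t$, $\int_0^T L(\gamma(t),\mu_t)\,\d t$, and the test integrals encoding the weak$^*$ convergence of $\nnu^N$. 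Such $\eta^N$ is produced by truncating $\vv$ via moderated approximants (Lemma \ref{le:approximation}) and a diagonal argument, or by extracting a good realisation from i.i.d.~samples and applying a strong law of large numbers. Then the candidate $\xx^N(t):=(\gamma_1^N(t),\dots,\gamma_N^N(t))$ and $u_i^N(t):=\dot\gamma_i^N(t)-\fF^N(\gamma_i^N(t),\xx^N(t))$ satisfy, by compatibility \eqref{eq:65},
\begin{equation*}
u_i^N(t)=\vv(t,\gamma_i^N(t))+\bigl(\fF(\gamma_i^N(t),\mu_t)-\fF^N(\gamma_i^N(t),\xx^N(t))\bigr)\in\UU,
\end{equation*}
so $(\xx^N,\uu^N)\in\AA^N$, and the convergences \eqref{eq:64}--\eqref{eq:63} follow from $\eta^N\to\eta$ together with the chosen test functionals.

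\textbf{Main obstacle.} The delicate step is the limsup estimate for the control cost $\media_0^T N^{-1}\sum_i\psi(u_i^N(t))\,\d t$. Using the $\cP_1$-convergence of $\fF^N$ uniformly on compacts and the moment control on the $\gamma_i^N$ (provided by the estimates of Section 5 and the finite-moment assumption \eqref{eq:61}), the discrepancy $\fF^N(\gamma_i^N(t),\xx^N(t))-\fF(\gamma_i^N(t),\mu_t)$ tends to zero uniformly in $(t,i)$. Combined with the doubling property of $\phi$ via \eqref{hp:phi}, this yields an inequality of the form
\begin{equation*}
\psi(u_i^N(t))\le C\bigl(1+\psi(\vv(t,\gamma_i^N(t)))\bigr)+\omega_N(t),\qquad \omega_N\to 0\ \text{in } L^1([0,T]),
\end{equation*}
so that the designed empirical convergence $N^{-1}\sum_i\int_0^T\psi(\vv(t,\gamma_i^N(t)))\,\d t\to\int\psi(\vv)\,\d\tilde\mu$ closes the argument. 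Producing $\eta^N$ with this simultaneous convergence is the technical heart of the proof and is where Lemma \ref{le:convergence} and Remark \ref{rem:moment-reinforcement} enter, in order to upgrade narrow convergence to the moment-preserving convergence required for the doubling estimate to transfer from the cluster points back to the empirical cost.
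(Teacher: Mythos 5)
Your $\Gamma$-liminf argument is correct and coincides with the paper's: the interaction term is handled via the locally uniform convergence of $L^N$ together with a cutoff, and the control term via the Jensen inequality \eqref{ineqpar} followed by the lower semicontinuity Theorem \ref{t.funct:meas:lsc}. The skeleton of your $\Gamma$-limsup (superposition principle, empirical approximation of the lift $\pi$, controls defined through the compatibility condition \eqref{eq:65}) is also the paper's. The gap sits precisely in the step you yourself flag as the technical heart, and it is a genuine one.

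First, the claim that the discrepancy $\fF^N(\gamma_i^N(t),\xx^N(t))-\fF(\gamma_i^N(t),\mu_t)$ tends to zero \emph{uniformly in $(t,i)$} is not available: the $\cP_1$-convergence of $\fF^N$ is uniform only on compact sets of $x$, and the sampled curves $\gamma_i^N$ admit no uniform sup-bound unless $\mu$ is compactly supported; the paper only uses the global linear growth bounds \eqref{eq:56}, \eqref{eq:60} together with locally uniform convergence on compacta. Second, and more seriously, an inequality of the form $\psi(u_i^N(t))\le C\bigl(1+\psi(\vv(t,\gamma_i^N(t)))\bigr)+\omega_N(t)$ with a multiplicative constant $C>1$ cannot close the argument: it yields $\limsup_N\cE^N\le C(1+\cE)$, not the sharp bound \eqref{limsup}. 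A domination of this type is useful only for the tail estimate on the complement of a compact set of curves (this is exactly how the paper uses \eqref{lastest} together with the tightness bound \eqref{eq:5}); on the bulk one needs the genuine convergence $\int_0^T\psi(u_i^N(t))\,\d t\to\int_0^T\psi(\vv(t,\gamma(t)))\,\d t$ along uniformly converging curves. The paper obtains this through three ingredients absent from your sketch: (i) a Lusin-type construction producing compacta $A_k$ on which $\cF$ and $\cH$ are continuous and building $\pi^N$ supported there; (ii) Visintin's theorem, by which strict convexity and superlinearity of $\phi$ plus convergence of $\int_0^T\phi(|\vv(t,\gamma_N(t))|)\,\d t$ upgrade weak to strong $L^1$ convergence of $\vv(\cdot,\gamma_N(\cdot))$, hence a.e.\ convergence of $\psi(u^N)$; and (iii) the generalized dominated convergence theorem. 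Your alternative of i.i.d.\ sampling plus the strong law of large numbers is a legitimate replacement for the atomic-approximation construction of $\pi^N$, but it does not remove the need for (ii)--(iii), since the perturbation $\ff-\ff^N$ still varies with $N$ and $\psi$ is not uniformly continuous on $\UU$.
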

\GGG As a combination of Theorem \ref{thm:compactness} 
and Theorem \ref{th:main} we obtain the convergence of minima.
\begin{theorem}\label{t:minima}
Let $\mu_0\in \cP_1(\R^d)$ be
satisfying \eqref{eq:61}.
\begin{enumerate}
\item There exists a sequence $\xx_0^N\in (\R^d)^N$, $N\in \N$, satisfying
  \begin{gather}
    \label{eq:49}
    \lim_{N\to\infty}W_1(\mu[\xx_0^N],\mu_0)=0,\\
    \label{eq:69}
    \limsup_{N\to\infty}\frac 1N\sum_{i=1}^N\phi(|x_{0,i}^N|)=\int\phi(|x|)\,\d\mu_0(x),\\
    \lim_{N\to\infty}E^N(\xx_0^N)=E(\mu_0)
    \label{eq:68}.
  \end{gather}
\item If a sequence $\xx_0^N$ satisfies \eqref{eq:49}
  then 
  for every choice of $(\xx^N,\uu^N)\in P(\xx_0^N)$ with
  $\mu^N:=\mu[\xx^N]$ and $\nnu^N:=\nnu[\xx^N,\uu^N]$, the collection
  of limit points $(\mu,\nnu)$ of $(\mu^N,\nnu^N)$ in
  $C([0,T];\cP_1(\R^d))\times \cM([0,T] \times \R^d; \UU )$ is non
  empty and contained in $P(\mu_0)$.
\item If moreover $U=\R^d$ and $\mu_0$ has compact support, 
  then 
  every sequence $(\xx_0^N)_{N\in \N}$ satisfying \eqref{eq:49} 
  and uniformly supported in a compact set
  also satisfies \eqref{eq:69} and \eqref{eq:68}.
\end{enumerate}
\end{theorem}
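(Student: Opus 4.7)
The overall plan is to combine the $\Gamma$-convergence of Theorem \ref{th:main} with the equi-coercivity supplied by Theorem \ref{thm:compactness}, in the classical two-step scheme for convergence of minima. As a preliminary step I would verify that $P(\mu_0)\neq\emptyset$: take any minimizing sequence $(\mu^n,\nnu^n)\in \AA(\mu_0)$, apply the $\Gamma$-$\limsup$ inequality to each element to obtain finite-agent recovery sequences with asymptotically the right cost, diagonalize over $N$, and then extract a subsequential limit via Theorem \ref{thm:compactness} which by the $\Gamma$-$\liminf$ inequality must realize $E(\mu_0)$.

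For part (1), I would pick a minimizer $(\mu^*,\nnu^*)\in P(\mu_0)$ and take its recovery sequence $(\xx^N,\uu^N)\in \AA^N$ given by Theorem \ref{th:main}. Setting $\xx_0^N:=\xx^N(0)$, the convergence \eqref{eq:49} follows by evaluating \eqref{eq:64} at $t=0$, \eqref{eq:69} is exactly \eqref{eq:63}, and the bound $E^N(\xx_0^N)\le \cE^N(\xx^N,\uu^N)$ combined with \eqref{limsup} gives $\limsup_N E^N(\xx_0^N)\le E(\mu_0)$. The matching lower bound comes by applying Theorem \ref{thm:compactness} to a genuine minimizer $(\hat\xx^N,\hat\uu^N)\in P^N(\xx_0^N)$ (whose existence is guaranteed by Section 4 and whose cost has just been bounded above), extracting a subsequential limit $(\hat\mu,\hat\nnu)\in \AA(\mu_0)$, and invoking the $\Gamma$-$\liminf$ inequality to conclude $E(\mu_0)\le \cE(\hat\mu,\hat\nnu)\le \liminf_N E^N(\xx_0^N)$.

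Part (2) proceeds in the same spirit, but the hypothesis \eqref{eq:49} alone does not directly yield a bound on $E^N(\xx_0^N)$; I would obtain one by testing against the trivial control $\uu\equiv 0$ (admissible since $0\in U$), whose dynamics have bounded first moments by \eqref{hp:linear} and Gronwall, and whose $L$-contribution is controlled by the $\cP_1$-uniform convergence of $L^N$ to $L$ combined with the moment estimates of Section 5 to guarantee tightness of the trajectory-induced empirical measures. Once $E^N(\xx_0^N)$ is uniformly bounded, Theorem \ref{thm:compactness} produces a non-empty set of accumulation points $(\mu,\nnu)\in \AA(\mu_0)$, and the chain $\cE(\mu,\nnu)\le \liminf_N E^N(\xx_0^N)\le E(\mu_0)$ (from $\Gamma$-$\liminf$ and part (1)) forces $(\mu,\nnu)\in P(\mu_0)$. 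For part (3), the assumptions $U=\R^d$ and $\supp\mu_0$ compact make \eqref{eq:69} an immediate consequence of \eqref{eq:41} since $\phi$ is bounded and continuous on the common compact support; the nontrivial point is \eqref{eq:68}, for which I would take the initial data $\tilde\xx_0^N$ and its recovery sequence $(\tilde\xx^N,\tilde\uu^N)$ from part (1) together with an optimal matching $\sigma_N$ realizing $\sfd_N(\xx_0^N,\tilde\xx_0^N)=W_1(\mu[\xx_0^N],\mu[\tilde\xx_0^N])$, drive agent $i$ from $x_{0,i}^N$ to $\tilde x_{0,\sigma_N(i)}^N$ at constant velocity on a short window $[0,\varepsilon_N]$ (admissible thanks to $U=\R^d$), and run a time-shifted copy of $\tilde\xx^N$ on $[\varepsilon_N,T]$. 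Choosing $\varepsilon_N\to 0$ much larger than $W_1(\mu[\xx_0^N],\mu[\tilde\xx_0^N])$ and exploiting the doubling \eqref{eq:19} and moderated \eqref{hp:phi} properties of $\psi$, the extra splicing cost vanishes and one recovers $\limsup_N E^N(\xx_0^N)\le E(\mu_0)$, completing \eqref{eq:68} exactly as in part (1).

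The hardest step I foresee is the splicing cost estimate in part (3): the time scale $\varepsilon_N$ must be tuned simultaneously against the impulsive-velocity penalty $\psi\big(W_1/\varepsilon_N\big)$, the $L$-integral on $[0,\varepsilon_N]$, and the perturbation of the $L$-integral on $[\varepsilon_N,T]$ caused by the time shift, and the argument relies critically on the moderated and doubling properties \eqref{hp:phi}, \eqref{eq:19}. A secondary but non-trivial issue is the a priori bound on $E^N(\xx_0^N)$ in part (2), where the moment estimates from Section 5 are needed to keep the trajectory empirical measures in a tight subset of $\cP_1$ so that the uniform convergence of $L^N$ can actually be invoked.
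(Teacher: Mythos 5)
Your overall architecture coincides with the paper's: claims (1) and (2) are treated as the standard combination of the $\Gamma$-convergence of Theorem \ref{th:main} with the equi-coercivity of Theorem \ref{thm:compactness}, and claim (3) is proved by splicing an optimally matched copy of a recovery sequence onto the prescribed initial data via a constant-velocity transport on a shrinking window $[0,\delta]$, using the doubling property \eqref{eq:19} to control $\psi$ of the impulsive velocity. This is exactly the paper's construction for (3); the paper quantifies your ``$\varepsilon_N$ much larger than $W_1$'' by permuting the indices so that $c_N=\calC_\phi(\hat\mu_0^N,\mu_0^N)=\frac1N\sum_i\phi(|\hat x_{0,i}^N-x_{0,i}^N|)$ and choosing $\delta=-K(\log c_N)^{-1}$, and it disposes of the time-shifted $L$-integral simply by nonnegativity of $L^N$. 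Your claims (1) and (3) are in order.

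There is, however, a genuine gap in your argument for claim (2). The inequality $\liminf_N E^N(\xx_0^N)\le E(\mu_0)$, which you attribute to part (1), does not follow from it: part (1) produces \emph{one particular} sequence $\tilde\xx_0^N$ (the initial data of the recovery sequence of a minimizer) along which $E^N\to E(\mu_0)$, and says nothing about $E^N(\xx_0^N)$ for a different sequence merely satisfying \eqref{eq:49}, because the finite-$N$ minimization is constrained to the initial datum $\xx_0^N$ while the recovery sequences of Theorem \ref{th:main} carry their own initial data. Transferring the upper bound from $\tilde\xx_0^N$ to $\xx_0^N$ is precisely the content of the gluing argument of part (3), which is available only under the extra hypotheses $U=\R^d$ and uniform compact support; the argument for (2) closes cleanly if one additionally assumes \eqref{eq:68} for the given sequence, or invokes the part-(3) construction under its hypotheses. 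Relatedly, your a priori bound on $E^N(\xx_0^N)$ via the zero control is not justified: $L^N$ is only assumed nonnegative, continuous and $\cP_1$-convergent to $L$ uniformly on compact subsets of $\R^d$, with no growth control, so $\int L^N(x,\mu_t^N)\,\d\mu_t^N(x)$ along the uncontrolled flow need not be bounded uniformly in $N$ (the small mass of $\mu_t^N$ outside a compact set may be multiplied by arbitrarily large values of $L^N$ there).
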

\nc

\subsection{Examples}

\paragraph{First order examples.}
  Take a continuous function $H:\R^d\to \R^d$ satysfying
  \[
  |H(x)|\leq A+B|x| \qquad \forall x\in\R^d
  \]
   and set
  \begin{equation}
    \label{eq:9}
    \fF^N(x,\yy):=\frac 1N\sum_{j=1}^N H(x-y_j)=
    \int_{\R^d} H(x-y)\,\d\mu[\yy](y)
  \end{equation}
  and
   \begin{equation}
    \label{eq:9bis}
    \fF(x,\mu):= \int_{\R^d} H(x-y)\,\d\mu(y).
  \end{equation}
  When $H=-\nabla W$ for an even function $W\in C^1(\R^d)$ the system \eqref{eq:state} is associated to the gradient flow of the interaction energy $\cW:(\R^d)^N\to\R$ defined by
  \begin{equation}
    \label{eq:10}
    \GGG \cW(\xx):=\frac 1{2N^2}\sum_{i,j=1}^NW(x_i-x_j)
  \end{equation}
  with respect to the weighted norm $\|\xx\|^2=\frac 1N\sum_{i=1}^N|x_i|^2$.

  More generally, we can consider a continuous kernel $K(x,y):\R^d\times \R^d\to \R^d$ 
  satisfying 
  \[
  |K(x,y)|\leq A+B(|x|+|y|) \qquad \forall x,y\in\R^d,
  \]
  obtaining
  \begin{equation}
    \label{eq:14}
      \fF^N(x,\yy):=\frac 1N\sum_{j=1}^N K(x,y_j)=
      \int_{\R^d} K(x,y)\,\d\mu[\yy](y)
  \end{equation}
and
   \begin{equation}
    \label{eq:14bis}
    \fF(x,\mu):= \int_{\R^d} K(x,y)\,\d\mu(y).
  \end{equation}

An example for $L^N$ and $L$ is the variance:
\[
L^N(x,\xx):=\left|x-\frac 1N\sum_{j=1}^N x_j\right|^2,
\]
and
\[
L(x,\mu):=\left|x-\int_{\R^d} y\,\d\mu(y)\right|^2.
\]

\paragraph{A second order example.}
\GGG
Second order systems can be easily reduced to first order models, if we admit
controls on positions and velocities. 
Let us see an example where controls act only on the velocities. \nc
  Assume $d=2m$ and write the vector $x=(q,p)$, where $q\in \R^m$ denotes the position and $p\in \R^m$ the velocity.
  
  \GGG We consider the  vector field $\fF^N(x,\xx)=(\fF^N_1(x),\fF^N_2(x,\xx))$ defined by
  \begin{equation}
    \label{eq:11}
    \fF^N_1((q,p))=p,\qquad
    \fF^N_2((q,p),(\qq,\pp))=-\frac 1N \sum_{j=1}^N \nabla W(p-p_j),
  \end{equation}
where the first component $\fF_1^N$ is local and it is not influenced by the 
  interaction with the other particles. \nc
  
 We are interested to the system 
  \begin{equation}
    \label{eq:12}
    \begin{cases}
    \dot q_i=p_i,\\
    \dot p_i=-\frac 1N \sum_{j=1}^N \nabla W(p_i-p_j)+u_{i},
    \end{cases}
  \end{equation}
  which corresponds to \eqref{eq:state} 
  where the vector $\uu$ has the particular form \GGG
  $\uu=((0,u_1),\cdots,(0,u_N))$, 
  \nc so that it is constrained
  to the subspace $\UU^N$ \GGG where $\UU=\{(0,u):u\in \R^m\}\subset \R^{2m}$. \nc
  The limit vector field $\fF(x,\mu)=(\fF_1(x),\fF_2(x,\mu))$ is defined by
  \begin{equation}
    \label{eq:11bis}
    \fF_1((q,p))=p,\qquad
    \fF_2((q,p),\mu)=- \nabla_p W*\mu,
  \end{equation}
  and the continuity equation
  \begin{equation}
    \partial_t \mu_t+\nabla\cdot\Big(\fF(x,\mu_t)\mu_t+\nnu_t\Big)=0.
  \end{equation}
   becomes a Vlasov-like equation
  \begin{equation*}
  \partial_t \mu_t+p\cdot\nabla_q\mu_t+\nabla_p\cdot\big(\fF_2(x,\mu_t)\mu_t+\nnu_t\big)=0.
  \end{equation*}
  \GGG It is easy to check that this structure fits in our abstract setting, 
  since $\fF^N,\fF$ satisfy the compatibility condition \eqref{eq:65}:
  for every $x\in \R^d$, $\yy\in (\R^d)^N$ and
  $\mu\in \cP_1(\R^d)$ we have
  $\fF(x,\mu)-\fF^N(x,\yy)=\big(0,\fF_2(x,\mu)-\fF_2^N(x,\yy)\big)\in \UU^N$.
  \nc

  By choosing in \eqref{eq:11}
  \begin{equation}
    \label{eq:13}
    \fF^N_2((q,p),(\qq,\pp))=-\alpha p-\frac 1N \sum_{j=1}^N \nabla W(p-p_j)
  \end{equation}
  for some $\alpha>0$ we obtain a model with friction in the velocity part.
By choosing in \eqref{eq:11}
  \begin{equation}
    \label{eq:13bis}
    \fF^N_2((q,p),(\qq,\pp))=-\frac 1N \sum_{j=1}^N a(|q-q_j|)(p-p_j)
    \end{equation}
    where $a:[0,+\infty)\to \R_+$ is a continuous and nonincreasing (thus bounded)
    function, we obtain a model of alignment.
A particular and interesting example for $a$ is given by the following decreasing function
 $a(|q|)=1/(1+|q|^2)^\gamma$ for some $\gamma\geq 0$, which yields the Cucker-Smale 
flocking model \cite{cucker2007emergent, cucker2007mathematics}.

An example for $L^N$ and $L$ in the second order model is the variance of the velocities:
\[
L^N((q,p),(\qq,\pp)):=\left|p-\frac 1N\sum_{j=1}^N p_j\right|^2,
\]
and
\[
L((q,p),\mu):=\left|p-\int_{\R^d} r_2\,\d\mu(r_1,r_2)\right|^2.
\]

\section{The finite dimensional problem}

Here we discuss the well-posedness of the finite dimensional control problem \eqref{minN0}. 

A first estimate on the solution is presented in the following Lemma, where we use the notation 
$|\yy|_N = \frac{1}{N}\sum_{i=1}^N |y_i|$, with $\yy = (y_1,\ldots, y_N)\in(\R^d)^N$.

\begin{lemma}\label{l.boundedness_x}
Let $(\xx,\uu)\in\AA^N$. Then 
\begin{equation}\label{momest}
\sup_{t \in [0,T]} |\xx(t)|_N \leq \left( |\xx(0)|_N + AT + \int_0^T |\uu(s)|_N\,\d s\right)e^{2BT},
\end{equation}	
where $A$ and $B$ are the constants of the assumption \eqref{hp:linear}.
\end{lemma}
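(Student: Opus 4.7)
The plan is a routine Grönwall estimate applied to the averaged norm $|\xx(t)|_N$. First I would write each component in integrated form
\[
  x_i(t)=x_i(0)+\int_0^t \fF^N(x_i(s),\xx(s))\,\d s+\int_0^t u_i(s)\,\d s,
\]
take absolute values, and invoke the linear growth bound \eqref{hp:linear} to get
\[
  |x_i(t)|\le |x_i(0)|+\int_0^t\Big(A+B(|x_i(s)|+|\xx(s)|_N)+|u_i(s)|\Big)\,\d s.
\]

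Next I would average over $i=1,\dots,N$. Since the average of $|x_i(s)|$ is precisely $|\xx(s)|_N$, the two growth terms combine into $2B|\xx(s)|_N$, and I obtain the scalar integral inequality
\[
  |\xx(t)|_N\le |\xx(0)|_N + AT+\int_0^T |\uu(s)|_N\,\d s+\int_0^t 2B\,|\xx(s)|_N\,\d s,
\]
valid for every $t\in[0,T]$. Note that the first three terms on the right-hand side form a constant (independent of $t$) upper bound for the data.

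Finally I would apply the classical Grönwall lemma to the non-decreasing majorant, which immediately yields
\[
  |\xx(t)|_N\le \Big(|\xx(0)|_N+AT+\int_0^T|\uu(s)|_N\,\d s\Big)\,e^{2Bt},
\]
and taking the supremum over $t\in[0,T]$ gives the claimed \eqref{momest}. The only subtle point is ensuring that $t\mapsto |\xx(t)|_N$ is absolutely continuous so that the integral/Grönwall machinery applies; this follows from $\xx\in AC([0,T];(\R^d)^N)$ and the triangle inequality, so no real obstacle arises. Integrability of $|\uu|_N$ over $[0,T]$ is guaranteed by $\uu\in L^1([0,T];\UU^N)$.
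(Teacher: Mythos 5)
Your proposal is correct and follows essentially the same route as the paper's proof: integrate the state equation componentwise, apply the linear growth bound \eqref{hp:linear}, average over $i$ so the two growth terms combine into $2B|\xx(s)|_N$, and conclude by Gr\"onwall. No issues.
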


\begin{proof}
From the integral formulation of equation \eqref{eq:state} we get 
\begin{equation}
\begin{split}
|x_i(t)| &\leq |x_i(0)| + \int_0^t |\fF^N(x_i(s),\xx(s))|\d s + \int_0^t |u_i(s)|\,\d s \\
&\leq |x_i(0)| + \int_0^t \left( A+B(|x_i(s)|+|\xx(s)|_N) \right)\d s + \int_0^t |u_i(s)|\,\d s. \\
\end{split}
\end{equation}
Averaging with respect to $N$ we obtain
\begin{equation}
|\xx(t)|_N \leq |\xx(0)|_N + AT+ \int_0^T |\uu(s)|_N\,\d s + 2B\int_0^t |\xx(s)|_N \d s
\end{equation}
and we conclude by Gronwall lemma.
\end{proof}

\begin{proposition}\label{p:sol_control}
\GGG For every $N\in\N$ 
and $\xx_0\in(\R^d)^N$ \nc the minimum problem 
\eqref{minN0} admits a solution, i.e., the set $P^N(\xx_0)$ is not empty.
\end{proposition}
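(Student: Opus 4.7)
The plan is to apply the direct method of the calculus of variations. First I would verify that $\AA^N(\xx_0)$ contains at least one admissible pair with finite cost: taking $\uu\equiv 0\in \UU^N$ (admissible since $\UU$ is a subspace containing $0$), Carath\'eodory theory combined with the linear growth bound \eqref{hp:linear} furnishes a solution $\xx$ of \eqref{eq:state}; Lemma \ref{l.boundedness_x} bounds $|\xx(t)|_N$ on $[0,T]$, so continuity of $L^N$ together with $\psi(0)=0$ gives $\cE^N(\xx,0)<+\infty$. In particular the infimum in \eqref{minN0} is finite, and I can select a minimizing sequence $(\xx^n,\uu^n)\in\AA^N(\xx_0)$.

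Next I would extract compactness. Since $\psi$ is moderated, $\psi(u)\ge \phi(|u|)-1$ for an admissible $\phi$ which is superlinear at infinity, so the uniform bound on $\cE^N(\xx^n,\uu^n)$ yields
\[
\sup_{n}\int_0^T \phi(|u_i^n(t)|)\,\d t<+\infty,\quad i=1,\ldots,N.
\]
By the de la Vall\'ee-Poussin criterion each sequence $\{u_i^n\}_n$ is uniformly integrable, hence by the Dunford-Pettis theorem weakly relatively compact in $L^1([0,T];\R^d)$, with weak limits in $L^1([0,T];\UU)$ since $\UU$ is closed. Uniform integrability also implies $L^1$-boundedness of $|\uu^n|_N$, so Lemma \ref{l.boundedness_x} provides a uniform $L^\infty$ bound on $\xx^n$. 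Continuity of $\fF^N$ then gives a uniform $L^\infty$ bound on the drift $\fF^N(x_i^n,\xx^n)$, while the control term is uniformly integrable; the $x_i^n$ are therefore equi-absolutely continuous on $[0,T]$. By Ascoli-Arzel\`a I pass to a subsequence with $x_i^n\to x_i$ uniformly on $[0,T]$, $\xx(0)=\xx_0$, and $u_i^n\rightharpoonup u_i$ weakly in $L^1$.

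To pass to the limit in the state equation I would work in integral form. Uniform convergence of $\xx^n$ and continuity of $\fF^N$ give, via dominated convergence, strong $L^1$-convergence of $\fF^N(x_i^n(\cdot),\xx^n(\cdot))$ to $\fF^N(x_i(\cdot),\xx(\cdot))$; testing the weak $L^1$-convergence $u_i^n\rightharpoonup u_i$ against $\mathbf{1}_{[0,t]}$ handles the control integral; so $(\xx,\uu)$ solves \eqref{eq:state}. It remains to establish lower semicontinuity of $\cE^N$: the running cost converges pointwise by continuity of $L^N$ and is uniformly bounded, so dominated convergence passes it to the limit (in fact as an equality), while for the control cost the classical Ioffe lower semicontinuity theorem for convex nonnegative integral functionals under weak $L^1$ convergence yields $\liminf_n \int_0^T \psi(u_i^n)\,\d t\ge \int_0^T \psi(u_i)\,\d t$. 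Summing and averaging gives $\cE^N(\xx,\uu)\le \liminf_n \cE^N(\xx^n,\uu^n)=\inf\cE^N$, so $(\xx,\uu)\in P^N(\xx_0)$.

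The main obstacle is the compactness step for the controls: the superlinearity at infinity encoded in the \emph{moderated} assumption on $\psi$ is what upgrades the energy bound to weak $L^1$-compactness of $\uu^n$; without it one would only get $L^1$-boundedness, which would be insufficient to identify a limit control and to pass to the limit in the nonlinear ODE. Everything else is either an application of Gronwall (already packaged in Lemma \ref{l.boundedness_x}), continuity of the data, or the standard Ioffe lsc result.
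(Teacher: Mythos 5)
Your proposal is correct and follows essentially the same route as the paper: the direct method with a minimizing sequence, weak $L^1$-compactness of the controls from the superlinearity of $\psi$ via de la Vall\'ee-Poussin/Dunford--Pettis, equi-boundedness and equi-continuity of the trajectories from Lemma \ref{l.boundedness_x} plus Ascoli--Arzel\`a, passage to the limit in the integral form of \eqref{eq:state}, and lower semicontinuity of the convex control cost under weak $L^1$ convergence (which the paper attributes simply to convexity of $\psi$ where you invoke Ioffe's theorem). The only addition is your explicit verification that $\AA^N(\xx_0)\neq\emptyset$ with $\uu\equiv 0$, which the paper asserts earlier when defining the admissible sets.
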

\begin{proof}
We fix  $N\in\N$ and $\xx_0\in(\R^d)^N$.
Let $\lambda := \inf \{\cE^N(\xx,\uu): (\xx,\uu)\in\AA^N(\xx_0)\}$.
Since  $\AA^N(\xx_0)$ is not empty, $\lambda<+\infty$.
Let $(\xx^k,\uu^k) \in \AA^N(\xx_0)$ be a minimizing sequence and $C:=\sup_k \cE^N(\xx^k,\uu^k) <+\infty$.

Since 
\begin{equation}\label{eq.bound_uk}
\sup_k \media_0^T \psi(u_i^k(t))\,\d t \leq C, \qquad \forall \, i=1,\ldots, N,
\end{equation}
and the function $\psi$ is superlinear, then the sequence $\uu^k$ is equi-integrable 
and hence weakly relatively compact in $L^1([0,T];\UU^N)$.
Hence there exists $\uu \in L^1([0,T], \UU^N)$ and a subsequence, {\MF again} denoted by $\uu^k$,
weakly convergent to $\uu$ in $L^1([0,T], \UU^N)$.  

Thanks to Lemma \ref{l.boundedness_x} the associated trajectories $\xx^k$ are equi-bounded. 
Let us now show the equi-continuity of $x^k_i(t)$. For $s \leq t$, by the equation \eqref{eq:state} we have
\begin{equation}\label{eq:eqk}
x_i^k(t) - x_i^k(s) = \int_s^t \fF^N(x_i^k(r), \xx^k(r))\,\d r + \int_s^t u_i^k(r)\,\d r. 
\end{equation}
Using the growth condition \eqref{hp:linear} and \eqref{momest} we get
\begin{equation*}
\begin{split}
|\xx^k(t) - \xx^k(s)|_N &\leq \frac1N\sum_{i=1}^N\int_s^t |\fF^N(x_i^k(r), \xx^k(r))|\,\d r + \int_s^t |\uu^k(r)|_N\,\d r \\
&\leq A(t-s) + 2B\int_s^t |\xx^k(r)|_N \,\d r + \int_s^t |\uu^k(r)|_N\,\d r \\
&\leq A(t-s) + 2B\left( |\xx_0|_N + AT + \int_0^T |\uu^k(r)|_N\,\d r \right)e^{2BT}(t-s)  + \int_s^t |\uu^k(r)|_N\,\d r .\\
\end{split}
\end{equation*}
Since $\int_0^T |\uu^k(r)|_N\,\d r $ is bounded, we have
\begin{equation}\label{equicont}
\sup_k |\xx^k(t) - \xx^k(s)|_N \leq \tilde C|t-s| + \sup_k\left|\int_s^t |\uu^k(r)|_N\,\d r\right|, \qquad \forall\, s,t \in [0,T],
\end{equation}
where $\tilde C:= A + 2B\left( |\xx_0|_N + AT + \sup_k \int_0^T |\uu^k(r)|_N\,\d r \right)e^{2BT}$. 
By the equi-integrability of $\uu^k$, the inequality \eqref{equicont} shows the  
 equi-continuity of $\xx^k$.
By Ascoli-Arzel\`a theorem there exists a continuous curve $\xx$ and a subsequence, {\MF again} denoted by $\xx^k$
such that $\xx^k\to\xx$ in $C([0,T];(\R^d)^N)$.
Passing to the limit in \eqref{eq:eqk} we obtain 
\begin{equation}
x_i(t) - x_i(s) = \int_s^t \fF^N(x_i(r),\xx(r))\,\d r + \int_s^t u_i(r) \,\d r, \qquad  i=1,\ldots, N,
\end{equation}  
from which we deduce that $\xx$ is absolutely continuous and solves the equation \eqref{eq:state}. 
Hence $(\xx,\uu)\in \AA^N(\xx_0)$.

Finally, by the convexity of $\psi$ and the continuity of $L^N$ we obtain the  
lower semicontinuity property
\begin{equation}
\begin{split}
\liminf_{k} \cE^N(\xx^k,\uu^k) &= \liminf_k \left[\media_0^T \frac 1N\sum_{i=1}^N  L^N(x_i^k(t), \xx^k(t))\,\d t+
\frac 1N\sum_{i=1}^N\media_0^T \psi(u^k_i(t))\,\d t \right] \\
&\geq  \media_0^T \frac 1N\sum_{i=1}^N L^N(x_i(t),\xx(t))\,\d t+
\frac 1N\sum_{i=1}^N\media_0^T \psi(u_i(t))\,\d t,
\end{split}
\end{equation}
whence the minimality of $(\xx,\uu)\in \AA^N(\xx_0)$.
\end{proof}

\section{Momentum estimates}

In this section we study the set $\AA$.
We observe that if $(\mu,\nnu)\in\AA$, then 
for any $\zeta \in C_c^1(\R^d)$ we have that the map $t \mapsto \int_{\R^d} \zeta \d\mu_t$ is absolutely continuous, {\MF a.e. differentiable}, and
\begin{equation}
\label{eq:54}
\frac{d}{dt}\int_{\R^d}\zeta(x)\,\d\mu_t(x) = \int_{\R^d} \la \ff(t,x),\nabla\zeta(x) \ra \,\d\mu_t(x) 
+ \int_{\R^d} \la\nabla\zeta(x), \d\nnu_t(x) \ra \quad \mbox{for a.e. } t\in[0,T],
\end{equation}
for the vector field $  \ff(t,x):=\fF(x,\mu_t)$ satisfying the structural bounds
\begin{equation}
  \label{eq:53}
  |\ff(t,x)|\le A+B\Big(|x|+\int_{\R^d}|x|\,\d\mu_t\Big).
\end{equation}

\GGG
In order to highlight the structural assumptions needed for the apriori estimates of this section, 
we introduce the set 
\begin{equation}
  \begin{aligned}
    \tilde\AA:=\Big\{&(\mu,\nnu,\ff):
    \mu\in AC([0,T];\cP_1(\R^d)),\ 
    \nnu\in \cM([0,T]\times
    \R^d;\UU ),\ 
    \cE(\nnu,\tilde\mu)<\infty,\\
    &\ff:[0,T]\times\R^d\to\R^d \text{ Borel function satisfying 
    \eqref{eq:54} and
    \eqref{eq:53}}\Big\}\label{eq:55};
  \end{aligned}
\end{equation}
the above discussion shows that if $(\mu,\nnu)\in \AA$ then setting $\ff(t,x):=\fF(x,\mu_t)$ 
we have $(\mu,\nnu,\ff)\in \tilde\AA$.\nc

Firstly, let us show a uniform bound in time of the first moment, which 
is the infinite dimensional version of Lemma \ref{l.boundedness_x}.
\begin{lemma}\label{p.first_moment}
\GGG
If $(\mu,\nnu,\ff)\in\tilde\AA$ \nc
then the following estimate holds true
\begin{equation}
\label{eq:59}
\sup_{t \in [0,T]} \int_{\R^d}|x|\,\d\mu_t(x) \leq  \left( \int_{\R^d}|x|\,\d\mu_0(x) + AT + |\nnu|((0,T)\times\R^d) \right)e^{2BT}.
\end{equation}
\GGG
In particular, there exists a constant $M>0$ only depending on $A,B, T, \cE(\mu,\nnu)$ and
$\int_{\R^d}|x|\,\d\mu_0$ such that
\begin{equation}
  \label{eq:56}
  |\ff(t,x)|\le M(1+|x|)\quad\text{for every }(t,x)\in [0,T]\times\R^d.
\end{equation}
\end{lemma}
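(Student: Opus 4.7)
My plan is to test the weak continuity equation \eqref{eq:54} against a smooth, compactly supported approximation of $\zeta(x)=|x|$, exploit the linear growth of $\ff$ from \eqref{eq:53} together with the trivial bound $|\nabla\zeta|\le 1$, and close the estimate with Gronwall's inequality, in direct analogy with the proof of Lemma \ref{l.boundedness_x}. Concretely, I fix a smooth non-decreasing function $\eta\in C^1([0,\infty))$ with $\eta(r)=r$ for $r$ small and $\eta$ constant outside a compact set, and form the $C^1_c$-approximations $\zeta_R(x):=R\,\eta\big(\sqrt{\delta^2+|x|^2}/R\big)$; these satisfy $|\nabla\zeta_R|\le 1$ and, by letting $\delta\downarrow 0$ and then $R\uparrow\infty$, increase monotonically to $|x|$.

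Testing \eqref{eq:54} with $\zeta_R$, integrating from $0$ to $t$, and bounding the drift term via \eqref{eq:53} give, with $m_1(\mu_s):=\int_{\R^d}|x|\,\d\mu_s$,
\begin{equation*}
\int_{\R^d}\zeta_R\,\d\mu_t \,\le\, \int_{\R^d}\zeta_R\,\d\mu_0 + \int_0^t\!\big(A+2B\,m_1(\mu_s)\big)\,\d s + |\nnu|((0,T)\times\R^d).
\end{equation*}
Because $\mu\in AC([0,T];\cP_1(\R^d))$, the map $s\mapsto m_1(\mu_s)$ is continuous, hence bounded on $[0,T]$; combined with $|\nabla\zeta_R|\le 1$ and the finiteness of $|\nnu|$ this supplies a uniform dominating integrand allowing the passage to the limit $R\to\infty$ (monotone convergence on the left, dominated convergence on the right), yielding
\begin{equation*}
m_1(\mu_t)\,\le\, m_1(\mu_0) + AT + |\nnu|((0,T)\times\R^d) + 2B\int_0^t m_1(\mu_s)\,\d s,
\end{equation*}
and Gronwall's lemma then produces \eqref{eq:59}.

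For \eqref{eq:56}, the superlinearity of $\psi$ together with $\psi(0)=0$ gives, for each $\varepsilon>0$, a constant $C_\varepsilon>0$ with $|v|\le \varepsilon\psi(v)+C_\varepsilon$ for all $v\in \UU$; hence
\begin{equation*}
|\nnu|([0,T]\times\R^d) \,=\, \int|\vv|\,\d\tilde\mu \,\le\, \varepsilon\,\Psi(\nnu|\tilde\mu) + C_\varepsilon \,\le\, \varepsilon\,\cE(\mu,\nnu) + C_\varepsilon,
\end{equation*}
which inserted into \eqref{eq:59} bounds $\sup_{t\in[0,T]} m_1(\mu_t)$ by a constant $M_0$ depending only on $A$, $B$, $T$, $\cE(\mu,\nnu)$ and $\int_{\R^d}|x|\,\d\mu_0$. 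Applying \eqref{eq:53} then yields $|\ff(t,x)|\le A+B|x|+BM_0\le M(1+|x|)$ with $M:=A+B(1+M_0)$, establishing \eqref{eq:56}.

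The only genuine technical point is the passage to the limit $R\to\infty$: the linearly growing weight $|x|$ demands careful control of the tails of $\mu_t$ and $\nnu$. I expect this to be the main obstacle, but it is handled transparently by the a priori finiteness and continuity of $m_1(\mu_t)$ inherited from $\mu\in AC([0,T];\cP_1(\R^d))$, together with the total-mass finiteness of $|\nnu|$, both of which dominate the integrands uniformly in $R$.
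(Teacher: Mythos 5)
Your proposal is correct and follows essentially the same route as the paper: test the weak continuity equation with smooth truncations of $|x|$ having gradient bounded by $1$ (the paper uses $\zeta(x/n)\sqrt{|x|^2+\varepsilon}$ where you use $R\,\eta(\sqrt{\delta^2+|x|^2}/R)$), pass to the limit by monotone convergence, and close with Gronwall; your explicit derivation of \eqref{eq:56} from the superlinearity of $\psi$ supplies a step the paper leaves implicit.
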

\begin{proof}
Let $\zeta \in C_c^1(\R^d)$ be a cut-off function such that $0 \leq \zeta \leq 1$,  
\begin{equation*}
\zeta(x) = 
\begin{system}
1 \qquad \text{ if } |x| \leq 1,\\
0 \qquad \text{ if } |x| \geq 2,
\end{system}
\end{equation*}
and $|\nabla\zeta|\leq 1$.
Let $\zeta_n$ be the sequence $\zeta_n(x):= \zeta(x/n)$.
Consider now the product $\zeta_n(x)|x|$ and smooth it out in zero by substituting $|x|$ with $g_\varepsilon(x):= \sqrt{|x|^2 + \varepsilon}$. 
Now $\zeta_ng_\varepsilon$ is a proper test function and the following equality holds true
\begin{equation*}
\begin{split}
\int_{\R^d} &\zeta_n(x)g_\varepsilon(x) \d\mu_t(x) -\int_{\R^d} \zeta_n(x)g_\varepsilon(x)\d\mu_0(x) \\
&= \int_0^t \int_{\R^d} \la \ff(s,x), \nabla (\zeta_n(x)g_\varepsilon(x))\ra  \d\mu_s(x)\d s 
+  \int_0^t \int_{\R^d} \la \nabla (\zeta_n(x)g_\varepsilon(x)), \d\nnu_s(x)\ra \d s 
\end{split}
\end{equation*}
Thanks to 
\[ |\nabla\zeta_n(x)| \leq \frac{1}{n}, \quad  g_\varepsilon(x) \leq |x| + \sqrt{\varepsilon}, 
\quad |\nabla g_{\varepsilon}(x)| = \frac{|x|}{\sqrt{|x|^2 +\varepsilon}} \leq 1\] 
we can write
\begin{equation*}
\begin{split}
\int_{\R^d} &\zeta_n(x)g_\varepsilon(x) d\mu_t(x) - \int_{\R^d} \zeta_n(x)g_\varepsilon(x)d\mu_0(x) \\
&  \leq  \left( 1 + \frac{\sqrt{\varepsilon}}{n} \right)\int_0^t \int_{\R^d} |\ff(s,x)|\d\mu_s(x)\d s 
+  \left( 1 + \frac{\sqrt{\varepsilon}}{n} \right)\int_0^t \int_{\R^d} d|\nnu_s|(x)\d s.
\end{split}
\end{equation*}
Apply now monotone convergence as $\varepsilon \to 0$ first, then  let $n\to \infty$. Owing to $\zeta_n|x| \nearrow |x|$ we get 
\begin{equation}
\begin{split}
\int_{\R^d} &|x|\d\mu_t(x) - \int_{\R^d} |x|\d\mu_0(x) \leq  
\int_0^t \int_{\R^d} | \ff(s,x)| \d\mu_s(x) \d s + |\nnu|((0,T)\times\R^d)\\
&\leq \int_0^t \int_{\R^d} \left[ A+B\Big(|x|+\int_{\R^d} |x|\,\d\mu_s(x)\Big)\right] \d\mu_s(x)\d s +  |\nnu|((0,T)\times\R^d) \\
&\leq  AT + 2B\int_0^t \int_{\R^d}|x|\d\mu_s(x)ds +  |\nnu|((0,T)\times\R^d),
\end{split}	
\end{equation}
and we conclude by Gronwall inequality.
\end{proof}

\begin{lemma}\label{p:distr:lip}
If $(\mu,\nnu,\ff)\in\tilde\AA$ with $\nnu=\vv\tilde\mu$,
then for any $\vartheta \in C^1_{\Lip}(\R^d)$ the following equality holds
\begin{equation}
\frac{d}{dt}\int_{\R^d}\vartheta(x)\,\d\mu_t(x) = \int_{\R^d} \la \ff(t,x) + \vv(t,x),\nabla\vartheta(x) \ra \,\d\mu_t(x)  	
\qquad \mbox{for a.e. } t\in[0,T],
\end{equation}
where $C^1_{\Lip}(\R^d)$ denotes the space of continuously differentiable functions with bounded gradient.
\end{lemma}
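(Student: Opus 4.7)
The strategy is to reduce the statement to \eqref{eq:54} via a cut-off argument. I fix $\zeta\in C_c^1(\R^d)$ with $0\le \zeta\le 1$, $\zeta\equiv 1$ on $B_1$, $\zeta\equiv 0$ outside $B_2$, and $|\nabla\zeta|\le 2$; I set $\zeta_n(x):=\zeta(x/n)$, so that $\zeta_n\vartheta\in C_c^1(\R^d)$ and $|\nabla\zeta_n|\le 2/n$ is supported on the annulus $A_n:=\{n\le |x|\le 2n\}$. Since both sides of \eqref{eq:54} are $L^1$ in $t$, \eqref{eq:54} is equivalent to its integrated version; applied to $\zeta_n\vartheta$ and using $\nabla(\zeta_n\vartheta)=\zeta_n\nabla\vartheta+\vartheta\nabla\zeta_n$ and $\nnu=\vv\tilde\mu$, I obtain
\begin{equation*}
\int_{\R^d}\zeta_n\vartheta\,\d\mu_t-\int_{\R^d}\zeta_n\vartheta\,\d\mu_0
=\int_0^t\!\!\int_{\R^d}\zeta_n\la\ff{+}\vv,\nabla\vartheta\ra\,\d\mu_r\,\d r
+\int_0^t\!\!\int_{\R^d}\vartheta\la\ff{+}\vv,\nabla\zeta_n\ra\,\d\mu_r\,\d r.
\end{equation*}

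The plan is then to pass to the limit $n\to\infty$. The three ingredients I will rely on are: (i) $\mu_r\in \cP_1(\R^d)$ together with Lemma \ref{p.first_moment}, which yields $\int|x|\,\d\mu_r$ uniformly bounded in $r$ and \eqref{eq:56}; (ii) the fact that $\vv\in L^1_{\tilde\mu}([0,T]\times\R^d;\UU)$, since $\phi$ is superlinear and doubling, so $|\vv|\le C(1+\phi(|\vv|))\le C(1+\psi(\vv)+1)$, which is $\tilde\mu$-integrable because $\cE(\mu,\nnu)<\infty$; (iii) $\vartheta$ has linear growth, $|\vartheta(x)|\le |\vartheta(0)|+L|x|$ where $L:=\|\nabla\vartheta\|_\infty$. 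The left-hand side converges to $\int\vartheta\,\d\mu_t-\int\vartheta\,\d\mu_0$ by dominated convergence against $|\vartheta(0)|+L|x|\in L^1_{\mu_t}$. In the first integral on the right, $\zeta_n\la\ff+\vv,\nabla\vartheta\ra\to\la\ff+\vv,\nabla\vartheta\ra$ pointwise, and the dominating function $L(|\ff|+|\vv|)\le LM(1+|x|)+L|\vv|$ is integrable w.r.t.\ $\d\mu_r\,\d r$ by (i) and (ii).

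The main (and only real) obstacle is showing that the boundary term $\int_0^t\!\int\vartheta\la\ff+\vv,\nabla\zeta_n\ra\,\d\mu_r\,\d r$ vanishes in the limit. On the annulus $A_n$ I estimate
\begin{equation*}
|\vartheta(x)\nabla\zeta_n(x)|\le \frac{2}{n}\bigl(|\vartheta(0)|+2Ln\bigr)\le C,
\end{equation*}
uniformly in $n$. Combined with $|\ff(r,x)|\le M(1+|x|)\le 3Mn$ on $A_n$ this is not yet enough by itself; but I rather use the sharper bound
\begin{equation*}
\Bigl|\int_{\R^d}\vartheta\la\ff,\nabla\zeta_n\ra\,\d\mu_r\Bigr|
\le CM\int_{A_n}(1+|x|)\,\d\mu_r
\le CM\Bigl(\mu_r(\{|x|\ge n\})+\int_{\{|x|\ge n\}}|x|\,\d\mu_r\Bigr),
\end{equation*}
which tends to $0$ as $n\to\infty$ by Lemma \ref{p.first_moment}, with a time-integrable dominant. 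The analogous $\vv$-term is estimated by $C\int_{[0,T]\times\{|x|\ge n\}}|\vv|\,\d\tilde\mu\to 0$ by absolute continuity of the integral of $|\vv|\in L^1_{\tilde\mu}$.

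Passing to the limit therefore yields the integrated identity
\begin{equation*}
\int_{\R^d}\vartheta\,\d\mu_t-\int_{\R^d}\vartheta\,\d\mu_0=\int_0^t\!\!\int_{\R^d}\la\ff+\vv,\nabla\vartheta\ra\,\d\mu_r\,\d r
\end{equation*}
for every $t\in[0,T]$, with an integrand belonging to $L^1((0,T))$. Differentiating in $t$ gives the claim for a.e.\ $t\in[0,T]$.
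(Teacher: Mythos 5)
Your proposal is correct and follows essentially the same route as the paper: truncate $\vartheta$ by the cut-off functions $\zeta_n$, apply \eqref{eq:54} to $\zeta_n\vartheta$, split the gradient by the product rule, and pass to the limit by dominated convergence using $|\nabla\zeta_n|\lesssim 1/n$, the linear growth of $\ff$ from Lemma \ref{p.first_moment}, and the $\tilde\mu$-integrability of $\vv$. The paper states the dominated-convergence step in one line, whereas you spell out the vanishing of the annulus term via the first-moment tails and the absolute continuity of $\int|\vv|\,\d\tilde\mu$; no substantive difference.
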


\begin{proof}
Let $\vartheta \in C^1_{\Lip}(\R^d)$ and $\zeta_n$ the sequence of cut-off functions defined in the proof of Lemma \ref{p.first_moment}.
Then $\zeta_n \vartheta$ is a test function and
\begin{equation}
\begin{split}
\frac{d}{dt}&\int_{\R^d}\zeta_n(x)\vartheta(x)\d\mu_t(x) = \int_{\R^d} \la \ff(t,x) + \vv(t,x), \nabla(\zeta_n(x)\vartheta(x)) \ra \d\mu_t(x)\\
&=  \int_{\R^d} \la \ff(t,x) + \vv(t,x), \nabla\zeta_n(x) \vartheta(x) + \zeta_n(x) \nabla \vartheta(x) \ra \d\mu_t(x).\\
\end{split}
\end{equation}
Taking into account that $|\nabla \zeta_n| \leq \frac{1}{n}\chi_{B_{2n}}$, the Lipschitz {\MF continuity} of $\vartheta$,
the growth condition on $\ff$ and Lemma \ref{p.first_moment}, by dominated convergence we obtain that
\begin{equation}
\int_{\R^d}\vartheta(x) \d\mu_t(x) = \int_{\R^d}\vartheta(x) \d\mu_0(x) 
+ \int_{\R^d} \la \ff(t,x) + \vv(t,x), \nabla \vartheta(x) \ra \d\mu_t(x).
\end{equation}
\end{proof}

Now we are ready to prove the main result of this section. 
\GGG It involves an auxiliary admissible function $\theta:[0,\infty)\to[0,\infty)$ 
(according to Definition \ref{def:Young}) dominated by $\psi$, i.e.
\begin{equation}
  \label{eq:50}
  \theta(|x|)\le 1+\psi(x)\quad
  \text{for every }x\in U;
\end{equation}
notice that, combining Lemma \ref{le:approximation} and Remark \ref{rem:moment-reinforcement},
if $\mu_0\in \cP_1(\R^d)$ we can always find 
an admissible function $\theta$ satisfying \eqref{eq:50} and
\begin{equation}
  \label{eq:51}
  \int_{\R^d}\theta(|x|)\,\d\mu_0(x)<\infty.
\end{equation}
\begin{proposition}\label{t.phi_moment}
\GGG 
Let $(\mu,\nnu,\ff)\in\tilde\AA$ 
and let $\theta$ be
an admissible function satisfying \eqref{eq:50}
and \eqref{eq:51}.
\nc
Then there exists a constant 
$C >0$, depending only on $A$, $B$, $T$,  
$\int_{\R^d}|x|\,\d\mu_0(x)$, $\cE(\mu,\nnu)$,  
\GGG $\theta(1)$ and the doubling constant $K$ of $\theta$ (see \eqref{eq:19}), \nc
such that
\begin{equation}
\label{eq:58}
\sup_{t \in [0,T]} \int_{\R^d} \theta(|x|)\,\d\mu_t(x) {\MF \leq}C\left(1 + \int_{\R^d} \theta(|x|)\,\d\mu_0(x)\right).
\end{equation}
\end{proposition}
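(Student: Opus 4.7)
My strategy is to test the continuity equation satisfied by $\mu$, in the form given by Lemma \ref{p:distr:lip}, against a Lipschitz truncation of $x\mapsto \theta(|x|)$, derive a Gronwall-type differential inequality for the truncated moment, and pass to the limit. Since $\theta$ is superlinear, $\theta(|\cdot|)\notin C^1_{\Lip}(\R^d)$; for each $n\in\N$ I therefore define
$$\theta_n(r):=\theta(r\wedge n)+\theta'(n)(r-n)_+,\qquad r\ge 0,$$
which is $C^1$, convex, nondecreasing, globally Lipschitz, satisfies $\theta_n'(0)=0$, and is monotone in $n$ with $\theta_n\nearrow\theta$ pointwise. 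A short case analysis using \eqref{eq:doubl2} for $\theta$ shows that the doubling-type inequality is preserved after truncation, namely $r\theta_n'(r)\le K(1+\theta_n(r))$ for every $r\ge 0$ (assuming, as one may, $K\ge 1$): for $r\le n$ this is immediate from $\theta_n=\theta$ on $[0,n]$, while for $r>n$ it follows from the elementary bound $(1-K)r+Kn\le n$. Consequently $\vartheta_n(x):=\theta_n(|x|)\in C^1_{\Lip}(\R^d)$, with $\nabla\vartheta_n(x)=\theta_n'(|x|)\,x/|x|$, is admissible as a test function.

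Inserting $\vartheta_n$ into Lemma \ref{p:distr:lip}, using the linear growth \eqref{eq:53} of $\ff$ and the uniform first-moment bound $m_1(t):=\int|y|\,d\mu_t(y)\le M_1$ from Lemma \ref{p.first_moment}, I obtain
$$\frac{d}{dt}\int \theta_n(|x|)\,d\mu_t\le \int \theta_n'(|x|)\bigl(A+B|x|+BM_1\bigr)\,d\mu_t+\int \theta_n'(|x|)|\vv(t,x)|\,d\mu_t.$$
The drift integral is controlled by the truncated doubling inequality together with the consequence $\theta_n'(r)\le C_1(1+\theta_n(r))$ for every $r\ge 0$ (the case $r\ge 1$ follows from $\theta_n'(r)\le r\theta_n'(r)\le K(1+\theta_n(r))$; the case $r\le 1$ from monotonicity of $\theta_n'$ and $\theta_n(1)\le\theta(1)$). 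For the control integral I apply Young's inequality for the Legendre pair $(\theta_n,\theta_n^*)$:
$$\theta_n'(|x|)|\vv|\le \theta_n(|\vv|)+\theta_n^*(\theta_n'(|x|))=\theta_n(|\vv|)+|x|\theta_n'(|x|)-\theta_n(|x|)\le \theta_n(|\vv|)+K+(K-1)\theta_n(|x|),$$
and then bound $\theta_n(|\vv|)\le\theta(|\vv|)\le 1+\psi(\vv)$ via \eqref{eq:50} (since $\vv$ takes values in $U$ by the definition of $\tilde\AA$).

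Collecting the estimates yields, for a constant $C_2$ depending only on $A,B,M_1,K,\theta(1)$,
$$\frac{d}{dt}\int\theta_n(|x|)\,d\mu_t\le C_2\Bigl(1+\int\theta_n(|x|)\,d\mu_t\Bigr)+\int \psi(\vv(t,x))\,d\mu_t.$$
Integrating on $[0,t]$, noting that $\int_0^T\!\int\psi(\vv)\,d\mu_s\,ds=T\,\Psi(\nnu|\tilde\mu)\le T\cE(\mu,\nnu)$, and applying Gronwall's lemma yields a bound $\int \theta_n(|x|)\,d\mu_t\le C(1+\int\theta(|x|)\,d\mu_0)$ that is independent of $n$. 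Monotone convergence as $n\to\infty$ then delivers \eqref{eq:58}. The key technical point, and the only one requiring real care, is the preservation of the doubling constant $K$ under the truncation $\theta\mapsto\theta_n$: this is what keeps both Young's inequality and the Gronwall step self-contained in terms of $\theta_n$, so that the final estimate can be closed before passing to the limit. Everything else is routine bookkeeping.
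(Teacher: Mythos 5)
Your proof is correct and follows essentially the same route as the paper's: truncate $\theta$ to a globally Lipschitz $C^1$ function of the same form, test the continuity equation via Lemma \ref{p:distr:lip}, bound the drift term through the doubling inequality \eqref{eq:doubl2} and the control term through Fenchel--Young plus \eqref{eq:50}, and close with Gronwall. The only (harmless, arguably slightly tidier) difference is that you verify the doubling bound is inherited by the truncations and run Gronwall at the truncated level with constants uniform in $n$, passing to the limit only at the end, whereas the paper first lets $n\to\infty$ in the integrated identity and then estimates and applies Gronwall to $\int\theta(|x|)\,d\mu_t$ directly.
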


\begin{proof}
Since $\cE(\mu,\nnu) <+\infty$, we have that $\nnu=\vv\tilde\mu$. We also set $\vartheta(x):=\theta(|x|)$, $x\in \R^d$.

STEP 1: We start by approximating $\theta$ from below with a sequence of $C^1$-Lipschitz functions
\begin{equation}
\GGG \vartheta^n(x):=\theta^n(|x|),\quad
\theta^n(r) := 
\begin{cases}
\theta(r)&\text{if }|x| \leq n \\
\theta'(n)(r - n) + \theta(n)&\text{if }r > n.
\end{cases}
\end{equation}
\begin{equation}
\begin{split}
\int_{\R^d} \vartheta^n(x)\d\mu_t(x) &= \int_{\R^d} \vartheta^n(x)\d\mu_0(x) 
+ \int_0^t \int_{\R^d} \la \ff(s,x) + \vv(s,x),\nabla\vartheta^n(x) \ra \d\mu_s(x) \d s\\
&\leq \int_{\R^d} \vartheta^n(x)\d\mu_0(x) + 
\int_0^t \int_{\R^d} |\ff(s,x) + \vv(s,x)| |\nabla\vartheta^n(x)| \d\mu_s(x) \d s.
\end{split}
\end{equation}
\nc By construction, $\theta^n(|x|) \nearrow \theta(|x|)$
$|\nabla\vartheta^n(x)| \nearrow |\nabla \vartheta(x)|$, for every $x \in \R^d$;
we can thus pass to the limit in the relation above to get
\begin{equation}\label{eq:psi_ineq}
\int_{\R^d} \vartheta(x)\d\mu_t(x) \leq \int_{\R^d} \vartheta(x)\d\mu_0(x) 
+ \int_0^t \int_{\R^d}  |\ff(s,x) + \vv(s,x)| |\nabla\vartheta(x)| \d\mu_s(x) \d s.
\end{equation}

STEP 2: We want to estimate the right hand side of \eqref{eq:psi_ineq}. 
\GGG Since $\theta'(r)\ge 0$ by \eqref{eq:doubl2}
and $\left|\nabla \vartheta(x)\right| = \left| \theta'(|x|)\frac{x}{|x|} \right| = \theta'(|x|)$, so that
\GGG \eqref{eq:56} yields \nc
\begin{equation}
\GGG
\int_{\R^d} 
|\ff(s,x)| |\nabla\vartheta(x)|\d\mu_s(x) 
\leq M\int_{\R^d} (1+|x|)\theta'(|x|)\d\mu_s(x) 
\end{equation}
By the monotonicity of $\theta'$ \eqref{eq:62} in $[0,1]$ and \eqref{eq:doubl2}, we have
\begin{displaymath}
  (1+r)\theta'(r)\le 2K(1+\theta(1)+\theta(r))
\end{displaymath}
\nc so that 
\begin{equation}
\GGG\int_{\R^d} |\ff(s,x)| |\nabla\vartheta(x)|\d\mu_s(x) 
%
\leq  2MK\left(1 +\theta(1)+ \int_{\R^d}\theta(|x|) \d\mu_s(x)\right).
\end{equation}
Concerning the second term on the right hand side of \eqref{eq:psi_ineq}
\begin{equation}
\begin{split}
|\vv(s,x)| |\nabla\vartheta(x)| &\leq \theta(|\vv(s,x)|) + \theta^*(|\nabla \vartheta(x)|) \\
&= \theta(|\vv(s,x)|) + \theta^*(\theta'(|x|)) \\
&= \theta(|\vv(s,x)|) + \theta'(|x|)|x| - \theta(|x|) \\
&\GGG
\leq \theta(|\vv(s,x)|) + K(1+\theta(|x|)) ,
\end{split}
\end{equation}
where the equality $\theta(|x|) + \theta^*(\theta'(|x|)) = \theta'(|x|)|x|$ comes from the definition of the Fenchel conjugate $\theta^*$.
What we end up with is the following
\begin{equation}
\begin{split}
\int_0^t\int_{\R^d} &|\vv(s,x)| |\nabla\vartheta(x)|\d\mu_s(x) \d s \\
&\leq \int_0^T\int_{\R^d}\theta(|\vv(s,x)|)\d\mu_s(x) \d s +
\GGG Kt+ K\nc\int_0^t\int_{\R^d}\theta(|x|)\d\mu_s(x) \d s \\
&\leq T\cE(\mu,\nnu) + \GGG (1+K)T+ K\nc \int_0^t \int_{\R^d}\theta(|x|) \d\mu_s(x)\d s.
\end{split}
\end{equation}
Summing up the two estimates we obtain for every $t\in [0,T]$ and a suitable constant $C>0$
\begin{equation}
\int_{\R^d} \theta(|x|)\d\mu_t(x) \leq \int_{\R^d} \theta(|x|)\d\mu_0(x) + CT +C\int_0^t \int_{\R^d}\theta(|x|) \d\mu_s(x) \d s,
\end{equation}
and thanks to the Gronwall inequality we get
\begin{equation}
\int_{\R^d} \theta(|x|)\d\mu_t(x) \leq e^{CT}\left(  \int_{\R^d} \theta(|x|)\d\mu_0(x) + CT \right).
\end{equation}
\end{proof}
\nc

\section{Proof of the main Theorems.}
\subsection{The superposition principle}
\label{app:super}
We first recall the superposition principle for solutions of the continuity equation
\begin{equation}\label{eq:continuity}
\partial_t	\mu_t + \nabla \cdot(\ww(t,\cdot) \mu_t) = 0.
\end{equation}
Let us denote with $\Gamma_T$ the complete and separable
metric space of continuous functions from $[0,T]$ to $\R^d$ endowed with the sup-distance 
and introduce the evaluation maps $e_t: \Gamma_T \to \R^d$ defined by
$e_t(\gamma) := \gamma(t)$, for $t \in [0,T]$.
The following result holds
\begin{theorem}[Superposition principle]\label{t:prob_repr}
Let $\mu_t$ be a narrowly continuous {\MF weak} solution to \eqref{eq:continuity} with a velocity field $\ww$ satisfying
\begin{equation}
\int_0^T \int_{\R^d} |\ww(t,x)|\,\d \mu_t(x)\d t < +\infty.
\end{equation}
Then there exists $\pi \in \cP(\Gamma_T)$ concentrated on the set of curves $\gamma \in AC([0,T];\R^d)$ such that 
\begin{equation}
\dot{\gamma}(t) = \ww(t,\gamma(t)) \quad \text{for a.e. } t\in [0,T].
\end{equation} 
Moreover, $\mu_t = (e_t)_\#{\pi}$ for any $t \in [0,T]$, 
i.e.
\begin{equation}
\int_{\R^d} \vartheta(y) \,\d \mu_t(y) = \int_{\Gamma_T} \vartheta(\gamma(t))\,\d\pi(\gamma), \qquad \forall \, \vartheta \in C_b(\R^d).
\end{equation} 
\end{theorem}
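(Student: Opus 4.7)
The plan is to reduce to a smoother velocity field by spatial convolution, construct approximating probabilities on curves by pushing the initial datum along the classical flow of the regularized field, establish tightness in $\cP(\Gamma_T)$, and pass to the limit, verifying both the marginal identity and the ODE condition. This is the classical superposition principle, as in Ambrosio's approach already cited in the introduction; the outline below follows that strategy.

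\emph{Step 1: regularization and classical flow.} Pick a smooth, compactly supported mollifier $\rho_\eps$ on $\R^d$ and set $\mu_t^\eps:=\mu_t*\rho_\eps$, together with
\begin{equation*}
  \ww^\eps(t,x):=\frac{\big((\ww(t,\cdot)\mu_t)*\rho_\eps\big)(x)}{\mu_t^\eps(x)}.
\end{equation*}
A direct computation shows that $(\mu^\eps,\ww^\eps)$ still solves the continuity equation, and Jensen's inequality gives $|\ww^\eps|\mu_t^\eps\le (|\ww|\mu_t)*\rho_\eps$, so that $\int_0^T\!\int_{\R^d}|\ww^\eps|\,\d\mu_t^\eps\,\d t\le \int_0^T\!\int_{\R^d}|\ww|\,\d\mu_t\,\d t<\infty$ uniformly in $\eps$. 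Since $\ww^\eps$ is $C^\infty$ and locally Lipschitz in $x$, the ODE $\dot\gamma=\ww^\eps(t,\gamma)$ admits a classical flow $X^\eps:[0,T]\times\R^d\to\R^d$; setting $\pi^\eps:=\big(x\mapsto X^\eps(\cdot,x)\big)_\sharp \mu_0^\eps\in\cP(\Gamma_T)$, one verifies $(e_t)_\sharp\pi^\eps=\mu_t^\eps$ for every $t\in[0,T]$.

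\emph{Step 2: tightness and extraction.} The initial marginals $\mu_0^\eps=\mu_0*\rho_\eps$ are tight, and by construction
\begin{equation*}
  \int_{\Gamma_T}\!\int_0^T |\dot\gamma(t)|\,\d t\,\d\pi^\eps(\gamma) =\int_0^T\!\int_{\R^d}|\ww^\eps|\,\d\mu_t^\eps\,\d t\le C.
\end{equation*}
Combining this total-variation control with the tightness of the starting marginals through a standard Ascoli/Dunford--Pettis argument yields, for every $\eta>0$, a compact set $K_\eta\subset\Gamma_T$ with $\pi^\eps(\Gamma_T\setminus K_\eta)<\eta$ uniformly in $\eps$. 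Hence $\{\pi^\eps\}$ is tight, and we extract a narrowly convergent subsequence $\pi^\eps\rightharpoonup\pi\in\cP(\Gamma_T)$; continuity of each evaluation map $e_t$ and the narrow convergence $\mu_t^\eps\to\mu_t$ imply $(e_t)_\sharp\pi=\mu_t$ for every $t\in[0,T]$.

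\emph{Step 3: identification with the ODE (main obstacle).} The delicate remaining step is to show that $\pi$-a.e.\ curve solves $\dot\gamma=\ww(t,\gamma)$. For each truncation $\ww_k\in C_b([0,T]\times\R^d;\R^d)$ with $\int_0^T\!\int_{\R^d}|\ww_k-\ww|\,\d\mu_t\,\d t\to 0$ as $k\to\infty$, the functional
\begin{equation*}
  F_k(\gamma):=\int_0^T \big(|\dot\gamma(t)-\ww_k(t,\gamma(t))|\wedge 1\big)\,\d t
\end{equation*}
is lower semicontinuous on $\Gamma_T$ with respect to narrow convergence (the $\dot\gamma$-term via a classical Ioffe/convexity argument, the $\ww_k\circ e_t$ term by continuity and boundedness). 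Since $\dot\gamma=\ww^\eps\circ e_t$ holds $\pi^\eps$-almost everywhere and $(e_t)_\sharp\pi^\eps=\mu_t^\eps$,
\begin{equation*}
  \int_{\Gamma_T}F_k\,\d\pi^\eps=\int_0^T\!\int_{\R^d} (|\ww^\eps-\ww_k|\wedge 1)\,\d\mu_t^\eps\,\d t \xrightarrow{\eps\to 0}\int_0^T\!\int_{\R^d}(|\ww-\ww_k|\wedge 1)\,\d\mu_t\,\d t \xrightarrow{k\to\infty} 0.
\end{equation*}
Lower semicontinuity then forces $\int F_k\,\d\pi=0$, hence $\dot\gamma(t)=\ww_k(t,\gamma(t))$ for $\pi$-a.e.\ $\gamma$ and a.e.\ $t$; a final passage $k\to\infty$ along a subsequence (extracted via the $L^1$-convergence of $\ww_k$ to $\ww$ against $\mu_t\otimes\d t$) yields the identity $\dot\gamma=\ww(t,\gamma(t))$. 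The true challenge is precisely this interchange of limits, which crucially relies on the integrability hypothesis $\int_0^T\!\int_{\R^d}|\ww|\,\d\mu_t\,\d t<\infty$.
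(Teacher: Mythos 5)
The paper itself does not prove Theorem \ref{t:prob_repr}: the statement is quoted from the literature and the proof is delegated to \cite[Section 3.4]{Ambrosio2014continuity}. Your mollification--tightness--identification scheme is indeed the strategy of that reference, but two of your steps do not close as written. First, in Step 2 a uniform bound on $\int_{\Gamma_T}\int_0^T|\dot\gamma(t)|\,\d t\,\d\pi^\eps(\gamma)$ together with tightness of the initial marginals does \emph{not} give tightness in $\cP(\Gamma_T)$: the sets $\{\gamma:\,|\gamma(0)|\le R,\ \int_0^T|\dot\gamma|\,\d t\le C\}$ are bounded but not equicontinuous (steeper and steeper ramps of fixed total variation have no uniformly convergent subsequence), hence not relatively compact in $C([0,T];\R^d)$, and Dunford--Pettis needs equi-integrability of the derivatives, not just an $L^1$ bound. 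This is precisely where the sharp $L^1$ hypothesis bites: one must first apply de la Vall\'ee Poussin to get a convex superlinear $\Theta$ with $\int_0^T\int_{\R^d}\Theta(|\ww|)\,\d\mu_t\,\d t<\infty$, note that this bound passes to $\ww^\eps$ by Jensen, and then use that sublevels of $\gamma\mapsto|\gamma(0)|+\int_0^T\Theta(|\dot\gamma|)\,\d t$ \emph{are} compact in $\Gamma_T$.

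Second, in Step 3 the functional $F_k(\gamma)=\int_0^T\big(|\dot\gamma(t)-\ww_k(t,\gamma(t))|\wedge 1\big)\,\d t$ is \emph{not} lower semicontinuous on $\Gamma_T$, because $v\mapsto|v-a|\wedge 1$ is not convex in $v$: in $d=1$ with $\ww_k\equiv 2$, sawtooth curves with $\dot\gamma_n=\pm 2$ converge uniformly to $\gamma\equiv 0$ while $F_k(\gamma_n)=T/2<T=F_k(0)$. (Moreover, even granting lower semicontinuity, for fixed $k$ you would only obtain $\int F_k\,\d\pi\le\int_0^T\int|\ww-\ww_k|\,\d\mu_t\,\d t$, not $\int F_k\,\d\pi=0$, so the intermediate conclusion ``$\dot\gamma=\ww_k(t,\gamma)$ $\pi$-a.e.'' is not available; only the final diagonal limit in $k$ makes sense.) The standard repairs are either to drop the truncation and use the convex, linear-growth integrand $|\dot\gamma-\ww_k(t,\gamma)|$ (lower semicontinuous by Ioffe-type results, with the relaxed value on non-absolutely-continuous curves), or, as in the cited reference, to move the truncation outside the time integral and test with the \emph{bounded continuous} functionals $\gamma\mapsto\big|\gamma(t)-\gamma(0)-\int_0^t\ww_k(s,\gamma(s))\,\d s\big|\wedge 1$, for which passage to the limit in $\eps$ requires only narrow convergence of $\pi^\eps$ plus a commutator estimate. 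A smaller point in Step 1: $\ww^\eps$ is smooth but neither bounded nor globally Lipschitz, so global existence of the flow $X^\eps$ up to time $T$ is not automatic and needs the usual no-blow-up argument for $\mu_0^\eps$-a.e.\ starting point.
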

For the proof we refer to \cite[3.4]{Ambrosio2014continuity}.

\subsection{$\Gamma$-convergence.}

Let us start with a preliminary lemma.
\begin{lemma}
Let $(\xx,\uu)\in AC([0,T];(\R^d)^N) \times L^1([0,T]; \UU^N)$ and 
\GGG $\mu=\mu[\xx],$ $\nnu=\nnu[\xx,\uu]$.
Then we have 
\begin{equation}\label{ineqpar}
	 \frac 1N\sum^N_{i=1}\psi(u_i(t)) \geq \Psi(\nnu_t | \mu_t) \qquad \mbox{for a.e.~} t\in [0,T].
\end{equation}
Moreover, if $(\xx,\uu)\in\AA^N$ then
\begin{equation}\label{eqpar}
	 \frac 1N\sum^N_{i=1}\psi(u_i(t)) =  \Psi(\nnu_t | \mu_t) \qquad \mbox{for a.e. } t\in [0,T].
\end{equation}
\end{lemma}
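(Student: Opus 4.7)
The key observation is to compute explicitly the Radon-Nikodym derivative $\vv_t=\d\nnu_t/\d\mu_t$ by grouping indices with coinciding positions. For $t\in[0,T]$ fixed, partition $\{1,\dots,N\}$ into equivalence classes $G_1,\dots,G_m$ with respect to the relation $i\sim j\Leftrightarrow x_i(t)=x_j(t)$, and let $z_\alpha$ be the common position on $G_\alpha$. Then
\begin{equation*}
\mu_t=\sum_\alpha \frac{|G_\alpha|}{N}\delta_{z_\alpha},\qquad
\nnu_t=\sum_\alpha \Bigl(\frac{1}{N}\sum_{i\in G_\alpha}u_i(t)\Bigr)\delta_{z_\alpha},
\end{equation*}
so $\nnu_t\ll\mu_t$ with density $\vv_t(z_\alpha)=\frac{1}{|G_\alpha|}\sum_{i\in G_\alpha}u_i(t)$.

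\textbf{The $\le$ inequality.} Convexity of $\psi$ and Jensen's inequality applied inside each $G_\alpha$ yield
\begin{equation*}
\Psi(\nnu_t|\mu_t)=\sum_\alpha\frac{|G_\alpha|}{N}\,\psi\!\Bigl(\tfrac{1}{|G_\alpha|}\sum_{i\in G_\alpha}u_i(t)\Bigr)\le \sum_\alpha\frac{1}{N}\sum_{i\in G_\alpha}\psi(u_i(t))=\frac{1}{N}\sum_{i=1}^N\psi(u_i(t)),
\end{equation*}
which is \eqref{ineqpar}, valid for \emph{every} $t$ (no ODE structure needed).

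\textbf{The equality case.} When $(\xx,\uu)\in\AA^N$, I want to show that for a.e.\ $t\in[0,T]$ the Jensen inequality above is saturated because all the $u_i(t)$ with $i\in G_\alpha(t)$ coincide. Fix a pair $i\neq j$ and set $E_{ij}:=\{t\in[0,T]:x_i(t)=x_j(t)\}$; this set is closed by continuity of the trajectories. By the Lebesgue density theorem, almost every $t_0\in E_{ij}$ is a density point, and by the standard fact that two differentiable functions agreeing on a set having $t_0$ as a density point must have equal derivatives at $t_0$, we get $\dot x_i(t_0)=\dot x_j(t_0)$ at a.e.\ density point $t_0\in E_{ij}$ where both trajectories are differentiable. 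Substituting the state equation \eqref{eq:state} and using that $\fF^N(x_i(t_0),\xx(t_0))=\fF^N(x_j(t_0),\xx(t_0))$ since $x_i(t_0)=x_j(t_0)$, we conclude $u_i(t_0)=u_j(t_0)$ for a.e.\ $t_0\in E_{ij}$. Taking the union over the finitely many pairs $(i,j)$, we obtain a negligible exceptional set outside of which $x_i(t)=x_j(t)\Rightarrow u_i(t)=u_j(t)$.

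For any such good $t$, the $u_i(t)$ with $i\in G_\alpha$ all coincide with a common value $u_\alpha$; hence $\vv_t(z_\alpha)=u_\alpha$ and
\begin{equation*}
\frac{|G_\alpha|}{N}\psi(\vv_t(z_\alpha))=\frac{1}{N}\sum_{i\in G_\alpha}\psi(u_i(t)),
\end{equation*}
summing over $\alpha$ produces \eqref{eqpar}. The only genuinely delicate step is the derivative-on-density-points argument: everything else is bookkeeping plus convexity.
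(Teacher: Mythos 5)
Your proof is correct and follows essentially the same route as the paper's: compute the density of $\nnu_t$ with respect to $\mu_t$ by averaging the controls over indices with coinciding positions, apply Jensen's inequality for \eqref{ineqpar}, and for the equality case show that coinciding trajectories force coinciding controls almost everywhere on the coincidence set. Your pairwise closed sets $E_{ij}$ combined with the Lebesgue density theorem are a cleaner substitute for the paper's Borel partition machinery (the sets $S_P$), but the underlying fact --- absolutely continuous curves agreeing on a set have equal derivatives a.e.\ there, hence equal controls via the state equation and the symmetry of $\fF^N$ --- is the same.
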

\begin{proof}
  \GGG 
  Let us first compute the density of $\nnu$ w.r.t.~$\tilde\mu$.
  We introduce the finite set $I_N:=\{1,2,\cdots,N\}$
  with the discrete topology and the normalized counting measure 
  $\sigma_N=\frac 1N\sum_{i=1}^N\delta_i$. 
  We can identify $\xx$ with a continuous map from $[0,T]\times I_N$ to $\R^d$, 
  $\xx(t,i):=x_i(t)$, so that $\mu_t=\xx(t,\cdot)_\sharp \sigma_N$.
  Similarly, we set $\uu(t,i):=u_i(t)$, where 
  $\uu:[0,T]\to \UU^N$ is a Borel representative.
  In order to represent $\tilde\mu$ and $\nnu$ it is useful to deal with
  the map $\yy:[0,T]\times I_N\to [0,T]\times \R^d$, $\yy(t,i):=(t,\xx(t,i))$, which yields
  $\tilde \mu=\yy_\sharp \big(\lambda\otimes\sigma_N\big)$ and
  $\nnu=\yy_\sharp \big(\uu\cdot (\lambda\otimes\sigma_N)\big).$
  We denote by $Y\subset [0,T]\times \R^d$ the range of $\yy$, 
  and by 
  $$X(t)=\{x\in \R^d:(t,x)\in Y\}=\{x\in \R^d: x_i(t)=x\text{ for some }i\in I_N\}$$ 
  its fibers.
  For every $(t,x)\in [0,T]\times \R^d$ we will also consider the set 
  \begin{equation}
    \label{eq:67}
    J(t,x):=\{i\in I_N:x_i(t)=x\}
    \text{ with its characteristic function}
    \quad 
    \nchi_{t,x}(i):=
    \begin{cases}
      1&\text{if }x_i(t)=x\\
      0&\text{otherwise.}
    \end{cases}
  \end{equation}
    For every $t\in [0,T]$ the collection $\{\nchi_{t,x}:x\in X(t)\}$ provides a partition of unity of $I_N$
    and for every $i\in I_N$ the map
    $(t,x)\mapsto \nchi_{t,x}$ is upper semicontinuous in $[0,T]\times \R^d$. 
  The conditional measures $\tilde\mu_{t,x}\in \cP(I_N)$ 
  are then defined by 
  \begin{equation}
\tilde\mu_{t,x}(J):=\sigma_N(J\cap J(t,x))/\sigma_N(J(t,x)),\quad
(t,x)\in Y;\label{eq:76}
\end{equation}
since for every $J\subset I_N$ 
  \begin{displaymath}
    \sigma_N(J\cap J(t,x))=\int_J \nchi_{t,x}\,\d\sigma_N
    =\frac 1N\sum_{i\in J}\nchi_{t,x}(i),
  \end{displaymath}
  the map
  $(t,x)\mapsto \sigma_N(J\cap J(t,x))$ is also upper semicontinuous 
  and $\tilde \mu_{t,x}$ is a Borel family.
  
  One immediately checks that $\tilde\mu_{t,x}$ provides a disintegration 
  (see e.g.~\cite[Theorem 5.3.1]{Ambrosio2008gradient})
  of $\lambda\otimes\sigma_N$ w.r.t.~the map $\yy$, i.e.
  \begin{displaymath}
    \lambda\otimes\sigma_N=\int \tilde\mu_{t,x}\,\d\tilde\mu(t,x).
  \end{displaymath}
  Since $\nnu=\yy_\sharp \big(\uu\cdot (\lambda\otimes\sigma_N)\big),$
  we eventually end up with the representation formula 
  for the Borel vector field $\vv$  
\begin{equation}
  \left\{
  %
        %
  \begin{aligned}
    \vv(t,x) :={}& \int_{I_N}\uu(t,i)\,\d\tilde\mu_{t,x}(i)=
    \frac{1}{\sharp J(t,x)} \sum_{i \in J(t,x)}u_i(t)
    &&\text{if }(t,x)\in Y,\\
        \vv(t,x)  :={}&0&&\text{otherwise.}
  \end{aligned}
  \right.
\end{equation}
In particular
\begin{equation}
	\nnu_t=\vv(t,\cdot)\mu_t,\quad
        \mu_t=\sum_{x\in X(t)}\frac{\sharp J(t,x)}N\delta_x
\end{equation} 
and consequently
\begin{equation}
  \label{eq:78}
  \Psi(\nnu_t| \mu_t)
  = \int_{\R^d}\psi(\vv(t,x))\,\d\mu_t(x)=
  \sum_{x\in X(t)}
    \frac {\sharp J(t,x)}N
    \psi\Big(\frac{1}{\sharp J(t,x)} \sum_{i \in J(t,x)}u_i(t)\Big).
\end{equation}
The convexity of $\psi$ immediately yields 
\begin{equation}\label{eq:ineqcontrol}
	\Psi(\nnu_t | \mu_t) 
        \leq \frac{1}{N}  \sum_{i=1}^N \psi(u_i(t)).
\end{equation}
Let us show that equality holds in \eqref{eq:ineqcontrol} if $(\xx,\uu)\in \AA^N$.

Let $\PP$ be the collection of all the partitions $P$ of $I_N$. 
It is clear that for every $t\in [0,T]$ the family $P_\sxx(t):=\{J(t,x): x\in X(t)\}$ is an element of $\PP$;
moreover for every $P\in \PP$ the set 
\begin{equation}
  \label{eq:77}
  S_P:=\{t\in [0,T]: P_\sxx(t)=P\}\quad\text{is Borel}.
\end{equation}
To show \eqref{eq:77} we introduce an order relation on $\PP$:
we say that $P_1\prec P_2 $ if every element of $P_1$ is contained in some 
element of $P_2$. We denote by $\hat P:=\{Q\in \PP:P\prec Q\}$ the collection of all the 
partitions $Q$ coarser than $P$. 

It is easy to check that for every $P\in \PP$ the set 
$P^{-1}_\sxx\big(\hat P\big)=\{t\in [0,T]:P_\sxx(t)\in \hat P\}$ is closed.
 In fact, if $P_\sxx(t)\not\in \hat P$ 
then there is a set $I\in P$ not contained in any element of $P_\sxx(t)$, so that we can find
two indices $i,j\in I$ belonging to different elements of $P_\sxx(t)$, i.e.~$x_i(t)\neq x_j(t)$.
By continuity, this relation holds in a neighborhood $U$ of $t$, so that $P_\sxx(s)\not\in \hat P$ 
for every $s\in U$.

Since for every partition $P\in \PP$ 
$\{P\}=\hat P\setminus\cup\big\{\hat Q:Q\in \hat P,Q\neq P\big\}$, it follows that 
\begin{displaymath}
  S_P=P_\sxx^{-1}\big(\hat P \big)\setminus 
  \bigcup_{Q\in \hat P,Q\neq P}P_\sxx^{-1}\big(\hat Q \big),
\end{displaymath}
so that $S_P$ is the difference between closed sets and \eqref{eq:77} holds.

We can therefore decompose the interval $[0,T]$ 
in the finite Borel partition $\{S_P:P\in \PP\}$.
On the other hand, for every partition $P\in \PP$ and every pair of indices $i,j$ in $I\in P$ 
we have $x_i(t)=x_j(t)$ in $S_P$ so that
$\dot x_i(t)=\dot x_j(t)$ for $\lambda$-almost every $t\in S_P$
and consequently, by \eqref{eq:state},
we obtain that $u_i(t)=u_j(t)$ for $\lambda$-a.e.~$t\in S_P$. We eventually deduce
\begin{equation}
  \label{eq:52}
  \sharp I\psi\Big(\frac{1}{\sharp I} \sum_{i \in I}u_i(t)\Big)=
  \sum_{i\in I}\psi(u_i(t))\quad\text{for every }I\in P_\sxx(t),\quad
  \text{$\lambda$-a.e.~in }S_P,
\end{equation}
and therefore, by \eqref{eq:78},
\newcommand{\ssxx}{{\boldsymbol{x}}}
\begin{align*}
  \Psi(\nnu_t | \mu_t)
  &=
    \sum_{I\in P_\ssxx(t)}
    \frac {\sharp I}N
    \psi\Big(\frac{1}{\sharp I} \sum_{i \in I}u_i(t)\Big)=
    \frac1N\sum_{I\in P_\ssxx(t)}\sum_{i\in I}\psi(u_i(t))
    \\&=
    \sum_{i=1}^N\psi(u_i(t))\quad\text{for $\lambda$-a.e.~$t\in S_P$}.
\end{align*}
Since $\{S_P:P\in \PP\}$ is a finite Borel partition of $[0,T]$, we get \eqref{eqpar}.
\end{proof}

\begin{proof}[Proof. of Theorem \ref{th:main}]

\noindent\textbf{The $\liminf$ inequality.}

Let $(\mu,\nnu)\in AC([0,T];\cP_1(\R^d)) \times \cM([0,T] \times \R^d; \UU )$ 
and $(\xx^N,\uu^N)\in AC([0,T];(\R^d)^N) \times L^1([0,T]; \UU^N)$, $N\in\N$,  
such that $\mu^N=\mu[\xx^N]\to \mu \text{ in } C([0,T];\cP_1(\R^d))$ and
$\nnu^N=\nnu[\xx^N,\uu^N] \rightharpoonup^* \nnu$ in $\cM([0,T] \times \R^d; \UU )$.

Since $L^N\geq0$, {\MF $L^N(x,\xx^N(t)) \to L(x,\mu_t)$ on compact sets, and $\mu[\xx^N] \rightharpoonup^* \mu$}, then for every compact $K\subset\R^d$ by \eqref{hp:Lconv} we have
\begin{equation}\label{eq:ab}
\begin{split}
\liminf_{N\to +\infty}&\int_{\R^d} L^N(x,\xx^N(t))\,\d\mu_t^N(x)\\
&\geq\liminf_{N\to +\infty}\int_{K} L^N(x,\xx^N(t))\,\d\mu^n_t(x) 
{\MF =} \int_{K} L(x,\mu_t)\,\d\mu_t(x).
\end{split}
\end{equation}
Since
$$\frac1N\sum^N_{i=1} L^N(x^N_i(t),\xx^N(t))=\int_{\R^d} L^N(x,\xx^N(t))\,\d\mu^N_t(x) 
$$
and $L\geq0$, by \eqref{eq:ab} we obtain
\begin{equation}\label{eq:a}
\liminf_{N\to \infty} \media_0^T  \frac 1N\sum^N_{i=1} L^N(x^N_i(t),\xx^N(t))\,\d t 
\geq \media_0^T\int_{\R^d} L(x,\mu_t)\,\d\mu_t(x)\,\d t.
\end{equation}
By \eqref{ineqpar} we have
\begin{equation}\label{eq:ineqcont}
	\frac{1}{N} \sum_{i=1}^N \psi(u^N_i(t)) \geq \Psi(\nnu^N_t | \mu^N_t) \qquad \mbox{for a.e. } t\in [0,T].
\end{equation}
and  Theorem \ref{t.funct:meas:lsc} yields
\begin{equation}\label{eq:b}
\liminf_{N\to \infty}  \media_0^T \Psi(\nnu^N_t | \mu^N_t)\,\dt
=\liminf_{N\to\infty}\Psi(\nnu^N |\tilde\mu^N) \geq  
\Psi(\nnu |\tilde\mu) =\media_0^T \Psi(\nnu_t | \mu_t)\,\dt.
\end{equation}
By \eqref{eq:a},  \eqref{eq:ineqcont}, and \eqref{eq:b} it follows \eqref{gammaliminf}.

\bigskip

\noindent\textbf{The $\limsup$ inequality.}
\nc
\GGG Recall that $\phi$ is an admissible function
satisfying \eqref{hp:phi}. \nc
Let $(\mu,\nnu)\in \AA$ such that $\cE(\mu,\nnu)<+\infty$
\GGG and $\int_{\R^d}\phi(|x|)\,\d\mu_0(x)<\infty$.

Since $\Psi(\nnu |\tilde\mu)<
+\infty$ we have \GGG $\nnu=\vv\tilde\mu$ 
for a Borel vector field 
$\vv:[0,T]\times\R^d\to\UU $. \nc
Since  $(\mu,\nnu)\in \AA$ the continuity equation
\begin{equation}\label{eq:vlasov2}
  \partial_t \mu_t+\nabla\cdot\big(\ww(t,\cdot)\mu_t\big)=0 
\end{equation}
holds with the vector field $\ww(t,x):=\ff(t,x)+\vv(t,x)$, $\ff(t,x):=\fF(x,\mu_t)$.
By \eqref{hp:linear} and Lemma \ref{p.first_moment} we have that
\begin{equation}
  \label{eq:57}
  \GGG
  \ff\in C([0,T]\times \R^d),\quad
  |\ff(t,x)|\le M(1+|x|),\quad
  \int_0^T\int_{\R^d}|\ww(t,x)|\,\d\mu_t(x)\,\d t <+\infty.
\end{equation}

By Theorem \ref{t:prob_repr} there exists a probability measure $\pi\in\cP(\Gamma_T)$ such that
$(e_t)_\#\pi=\mu_t$ for every $t\in[0,T]$ and it is concentrated on the absolutely continuous solutions of the ODE
\begin{equation}\label{char_eq}
\dot\gamma(t) = \ff(t,\gamma(t)) + \vv(t,\gamma(t)).
\end{equation}
The strategy of the proof consists in finding an appropriate sequence of measures $\pi^N\in\cP^N(\Gamma_T)$ 
narrowly convergent to $\pi$,
defining  $\mu^N_t:= (e_t)_\sharp \pi^N$ and $\xx^N$ a corresponding curve such that
$\mu[\xx^N]=\mu^N$. 
Then the objective is to construct
a suitable sequence of controls $\uu^N$
in such a way that
the sequence $(\xx^N,\uu^N)$ belongs to $\AA^N$, 
$\mu^N
\to \mu$  in $C([0,T];\cP_1(\R^d))$, $\nnu^N=\nnu[\xx^N,\uu^N]
\rightharpoonup^* \nnu$ in $\cM([0,T] \times \R^d; \UU )$ and
\eqref{limsup} holds.

\noindent STEP 1: (\textbf{Definition of auxiliary functionals.})

We define the set 
$$A:=\{\gamma\in\Gamma_T:\gamma \in AC([0,T];\R^d), \eqref{char_eq} \text{ holds for a.e. }t\in[0,T]\}$$
and we observe that $\pi(A)=1$.

Starting from $\mu$ and $L$ we define the functional $\cL:A\to [0,+\infty)$ by
\[\cL(\gamma) := \media_0^T L(\gamma(t),\mu_t)\,\d t.\]
Starting from $\psi$ and $\vv$ we define the functional $\cF:A\to [0,+\infty)$ by
\[\cF(\gamma) := \media_0^T \psi(\vv(t,\gamma(t)))\,\d t.\]
By Fubini's theorem and the finiteness of $\cE(\mu,\nnu)$ we have
\begin{equation}\label{eq:FL}
\media_0^T \int_{\R^d} L(x,\mu_t) \,\d\mu_t(x)\,\d t =\media_0^T \int_{A}L(e_t(\gamma),\mu_t) \,\d\pi(\gamma)\,\dt 
=  \int_{A} \cL(\gamma) \,\d\pi(\gamma)
\end{equation}
and
\begin{equation}\label{eq:Fpsi}
\media_0^T \int_{\R^d} \psi(\vv(t,x)) \,\d\mu_t(x)\,\d t =\media_0^T \int_{A} \psi(\vv(t, e_t(\gamma)) \,\d\pi(\gamma)\,\dt 
=  \int_{A} \cF(\gamma) \,\d\pi(\gamma).
\end{equation}

We define the functional  $\cH: A \to [0,+\infty)$ by
\[ \cH(\gamma) := \media_0^T \phi(|\vv(t, \gamma(t)|)\,\d t. \]
Starting by $\phi$ satisfying \eqref{hp:phi} we define the functional  $\cG: A \to [0,+\infty)$ by
\[\cG(\gamma) :=\phi(|\gamma(0)|) + \media_0^T \phi(|\gamma(t)|)\,\d t. \]
It is not difficult to show that $\cG$ and $\cL$ are continuous.
Here we prove that $\cF$ and $\cH$ are lower-semicontinuous.
Let $\gamma \in A$ and $(\gamma_k)_{k \in \N}$ be a sequence in $A$ such that 
$\lim_{k \to +\infty} \sup_{t \in [0,T]} |\gamma_k(t) - \gamma(t)| = 0$.
We define the sequence $f_k\in L^1([0,T];\R^d)$ by
$f_k(t):= \vv(t,\gamma_k(t))$. 

If $\sup_{k\in\N} \media_0^T \phi(|\vv(t, \gamma_k(t)|)\,\d t<+\infty$, 
then {\MF by de la Vall\'ee Poussin's criterion \cite[Proposition 1.12]{ambrosio2000functions} for equi-integrability and Dunford-Pettis theorem} there exist $g\in L^1([0,T];\R^d)$ and a subsequence (not relabeled) of $f_k$
weakly convergent in $L^1([0,T];\R^d)$ to $g$
such that
$$\liminf_{k\in\N} \media_0^T \phi(|\vv(t, \gamma_k(t)|)\,\d t \geq \media_0^T \phi(|g(t)|)\,\d t.$$
Since $\gamma_k$ satisfies
\begin{equation}\label{eq:inteql}
\gamma_k(t_2) - \gamma_k(t_1) = \int_{t_1}^{t_2} \left[ \vv(t,\gamma_k(t)) + \ff(t,\gamma_k(t)) \right] \d t, \qquad \forall \,  t_1,t_2 \in [0,T]
\end{equation} 
and  $\gamma$ satisfies
\begin{equation}\label{eq:inteq}
\gamma(t_2) - \gamma(t_1) = \int_{t_1}^{t_2} \left[ \vv(t,\gamma(t)) + \ff(t,\gamma(t)) \right] \d t, \qquad \forall \,  t_1,t_2 \in [0,T]
\end{equation} 
passing to the limit in \eqref{eq:inteql} as {\MF $k \to \infty$} we obtain
\begin{equation}
\gamma(t_2) - \gamma(t_1) = \int_{t_1}^{t_2} \left[ g(t) + \ff(t,\gamma(t)) \right] \d t.
\end{equation} 
By \eqref{eq:inteq} it holds
\begin{equation}
\int_{t_1}^{t_2} g(t) \d t = \int_{t_1}^{t_2} \vv(t,\gamma(t)) \d t, \qquad \forall t_1,t_2 \in [0,T],
\end{equation}  
and Lebesgue differentiation Theorem yields $g(t)=\vv(t,\gamma(t))$
for a.e. $t\in [0,T]$. 

\noindent STEP 2: (\textbf{Construction of $\pi^N$.})
We define the function $ \frF:= (\cF, \cL, \cG, \cH): A \to \R^4$. 

Notice that the finiteness of $\cE(\mu,\nnu)$, \eqref{eq:FL}, and \eqref{eq:Fpsi} 
imply that $\int_{A} \cF(\gamma)\,\d\pi(\gamma) < +\infty$, 
$\int_{A} \cL(\gamma)\,\d\pi(\gamma) < +\infty$ and
 $\int_{A} \cH(\gamma)\,\d\pi(\gamma) < +\infty$. 
Since  $\int_{A} \cG(\gamma)\,\d\pi(\gamma) = \int_0^T \int_{\R^d}\phi(|x|)\,\d\mu_t(x)\,\d t$,
 by Proposition \ref{t.phi_moment} we also have that $\int_{A} \cG(\gamma)\,\d\pi(\gamma) < +\infty$.

By Lusin's theorem applied to the space $A$ with the measure $\pi$ and the function $\frF$
 there exists a sequence of compact sets $A_k$ such that $A_k \subset A_{k+1} \subset A$, 
$\pi(A \setminus A_k) < \frac{1}{k}$, for all $k \geq 1$, and $\frF_{|A_k}$ is continuous. 
Moreover we have 
\begin{equation}\label{eq:esaustione}
\GGG \lim_{k\to\infty}\pi(A_k)=\nc \pi \Big( \bigcup_{j=1}^\infty A_j \Big) = 1,\quad
\GGG
\pi\Big(A\setminus \bigcup_{j=1}^\infty A_j\Big)=0.
\end{equation}
Then we define $\tilde \pi^k \in \cP(\Gamma_T)$ by
\begin{equation}
\tilde \pi^k := \frac{1}{\pi(A_k)} \pi\mres{A_k}.
\end{equation} 
\GGG It is easy to check that
$(\tilde\pi^k)_{k\in\N}$ weakly converges to $\pi$
as $k\to\infty$; 
since for each component $\frF_j$, $j=1,2,3,4$, of $\frF$ is nonnegative,
Beppo Levi monotone convergence Theorem yields \nc
\begin{equation}\label{eq:mon_Kn}
	\lim_{k \to +\infty} \int_{A_k} \frF_j(\gamma)\,\d\pi(\gamma) = \int_{\bigcup_{k=1}^\infty A_k} \frF_j(\gamma)\,\d\pi(\gamma) 
	= \int_A \frF_j(\gamma)\,\d\pi(\gamma),
\end{equation}
and \eqref{eq:esaustione} easily yields
\begin{equation}\label{eq:limit1}
\lim_{k \to \infty}\left|\int_{\Gamma_T} \frF(\gamma)\,\d\tilde\pi^k(\gamma) -
  \int_{\Gamma_T} \frF(\gamma)\,\d\pi(\gamma)\right|=0.
\end{equation} 
\GGG 
Since $A_k$ is compact, 
we can find a sequence of atomic measures
$$m \mapsto \tilde\pi^k_m:=\frac{1}{m}\sum_{i=1}^m \delta_{\gamma_{i,k,m}},\quad
\gamma_{i,k,m}\in A_k,$$
\nc narrowly convergent to $\tilde\pi^k$ as $m\to+\infty$.
Since $\frF_{|A_k}$ is bounded and continuous, in particular it holds that
\begin{equation}
\lim_{m \to \infty}\int_{\Gamma_T}\frF(\gamma)\,\d\tilde\pi^k_m(\gamma)  = \int_{\Gamma_T} \frF(\gamma) \,\d\tilde\pi^k(\gamma).
\end{equation}
Hence, for every $k\in\N$ there exists $\bar{m}(k)$ satisfying
\begin{equation} \label{eq:limit2} 
W(\tilde\pi_m^k,\tilde\pi^k)\le \frac 1k \quad \mbox{and} \quad
\left| \int_{\Gamma_T}\frF(\gamma)\,\d\tilde\pi^k_m(\gamma) - 
\int_{\Gamma_T} \frF(\gamma) \,\d\tilde\pi^k(\gamma) \right| \leq \frac 1k, \qquad \forall \, m \geq \bar{m}(k),
\end{equation} 
where $W$ is any distance metrizing the weak convergence.

We define 
$\bar\pi^k:= \tilde\pi^k_{\bar{m}(k)}$ and we clearly have that
$\bar\pi^k\in\cP^{\bar m(k)}(\Gamma_T)$,
 $W(\bar\pi^k,\pi)\to 0$
as $k\to\infty$ and, by \eqref{eq:limit1} and  \eqref{eq:limit2}, 
\begin{equation}
\label{eq:3}
\lim_{k\to\infty}\left| \int_{\Gamma_T}\frF(\gamma)\,\d\bar\pi^k(\gamma) -
  \int_{\Gamma_T} \frF(\gamma) \,\d\pi(\gamma) \right| =0.
\end{equation}
Since we can choose the sequence $k\mapsto \bar{m}(k)$ strictly increasing, we can 
consider the sequence $N\mapsto \pi^N$ such that
\GGG
$\pi^N\in\cP^{N}(\Gamma_T)$, $\pi^N:=\bar\pi^k$ when $\bar m(k)\le N<\bar m(k+1)$; \nc
$\pi^N$ narrowly converges to $\pi$ as $N\to+\infty$
  and
\begin{equation}
\label{eq:3bis}
\lim_{N\to+\infty}\int_{\Gamma_T}\frF(\gamma)\,\d\pi^N(\gamma) =
  \int_{\Gamma_T} \frF(\gamma) \,\d\pi(\gamma). 
\end{equation}

Since all the components of $\frF$ are nonnegative and lower semicontinuous maps, {\MF by a combination of \cite[Proposition 1.62 (a)]{ambrosio2000functions} and \cite[Proposition 1.80]{ambrosio2000functions} we have that}  \eqref{eq:3}
yields in particular that the measures 
\begin{displaymath}
  \sigma^N_1:=\cF\,\pi+\cF\,\pi^N,\quad
  \sigma^N_2:=\cG\,\pi+\cG\,\pi^N, \quad
  \sigma^N_3:=\cH\,\pi+\cH\,\pi^N, \quad
   \sigma^N_4:=\cL\,\pi+\cL\,\pi^N
\end{displaymath}
weakly converge to $\sigma_1:=2\mathcal F\,\pi$, $\sigma_2:=2\mathcal G\,\pi$, 
$\sigma_3:=2\mathcal H\,\pi$ and $\sigma_4:=2\mathcal L\,\pi$ respectively. 
In particular they are uniformly tight, so that for
every $\eps>0$ there exists $
\bar N(\eps)\in \N$ and a compact set $B_\eps$ and such that
\begin{equation}
  \label{eq:5}
  B_\eps\subset A_N,\quad 
  (\pi+\pi^N)(\Gamma_T\setminus B_\eps)+\int_{\Gamma_T\setminus B_\eps} \Big(\mathcal F+ \cL + \mathcal
  H + \cG \Big)\,\d(\pi+\pi^N)\le \eps\quad
  \text{for every }N\ge \bar N(\eps).
\end{equation}

\noindent STEP 3: (\textbf{Definition of $(\xx^N,\uu^N)$ and convergence.})
We define $\mu_t^N:= (e_t)_\sharp \pi^N\in\cP^{N}(\R^d)$ 
and we denote by $\xx^N$ a corresponding curve such that
$\mu[\xx^N]=\mu^N$. 
We define
 \begin{align}\label{f^N_i}
   \GGG
   \ff^N(t,x):={}&\fF^N(x,\xx^N(t))=\fF(x,\mu^N_t),
   \quad
   \vv^N(t,x):= \vv(t,x) + \ff(t,x) - \ff^N(t,x)\\
   u^N_i(t):={}& \vv^N(t,x^N_i(t)) 
\end{align} 
and $\uu^N=(u_1^N,\ldots,u_N^N)$.
\GGG Notice that
\begin{displaymath}
  \text{$\ff(t,x) - \ff^N(t,x)\in \UU$ and 
    $\uu^N\in \UU^N$ thanks to the compatibility condition \eqref{eq:65}.}
\end{displaymath}
\nc
We have that $\nnu^N:=\nnu[\xx^N,\uu^N]=\vv^N\mu^N$.
Since each component $x_i^N$ of $\xx^N$ belongs to $A$, then
the sequence $(\xx^N,\uu^N)$ belongs to $\AA^N$, so that 
$(\mu^N,\nnu^N)\in\AA$.
Using the same computation of the proof of Proposition \ref{t.phi_moment}, taking into account that $\mu^N$
satisfies 
\begin{equation}\label{eq:CEA}
  \partial_t \mu^N_t+\nabla\cdot\Big(\big(\ff(t,\cdot)+\vv(t,\cdot)\big)\mu^N_t\Big)=0,
\end{equation}
with (recall \eqref{eq:Fpsi} and \eqref{eq:3bis})
\begin{displaymath}
  \media_0^T\int_{\R^d} \psi(\vv(t,x))\,\d\mu^N_t(x)\,\d t
  =\int_{\Gamma_T} \cF(\gamma)\,\d\pi^N(\gamma)\le 1+\cE(\mu|\nnu)
\end{displaymath}
for $N$ sufficiently big, 
we obtain by \eqref{eq:57}, \eqref{eq:58} and \eqref{eq:56}
that
\begin{equation}\label{momNbis}
\sup_{N\in \N} \sup_{t \in [0,T]} \int_{\R^d} |x|\,\d\mu_t^N(x)<+\infty,
\quad
\sup_{N\in \N} \sup_{t \in [0,T]} \int_{\R^d} \phi(|x|)\,\d\mu_t^N(x) < +\infty,
\end{equation}
which implies the uniform convergence
$\mu^N \to \mu$  in $C([0,T];\cP_1(\R^d))$ 
\GGG and
the uniform estimate
\begin{equation}
  \label{eq:60}
  |\ff^N(t,x)|\le M'(1+|x|)\quad\text{for every }t\in [0,T],\ x\in \R^d,\ N\in \N,
\end{equation}
for a suitable constant $M'>0$. \nc
By a direct computation, using the assumption \eqref{hp:Fconv}, we obtain that
 $\nnu^N \rightharpoonup^* \nnu$ in $\cM([0,T] \times \R^d; \UU )$.\\

\noindent STEP 4: (\textbf{Definition and convergence of $\cF^N$.}) 
We define $\cF^N:A\to [0,+\infty)$ by
\begin{equation}\label{def:F^k}
\cF^N(\gamma) := \media_0^T \psi(\vv^N(t,\gamma(t)))\,\d t.
\end{equation}

Here we show that the sequence
$\cF^N$ converges to $\cF$ uniformly on every compact set $\Lambda \subset A_h$ for some $h\in\N$. 
To do it, we fix $\Lambda \subset A_h$ and we prove that for any $\gamma\in\Lambda$ and
 every sequence $(\gamma_N)_{N \in \N} \subset \Lambda$ such that $\sup_{t \in [0,T]} |\gamma_N(t) - \gamma(t)| \to 0$, we have
$\cF^N(\gamma_N) \to \cF(\gamma)$ as $N\to+\infty$.

By the assumption \eqref{hp:Fconv} we have that
\begin{equation}
  \lim_{N\to+\infty}|\ff(t,\gamma_N(t)) - \ff^N(t,\gamma_N(t))| = 0, \qquad \forall\, t \in [0,T].
\end{equation}
Since $\cH$ is continuous in $A_h$ it holds
\begin{equation}\label{contphiN}
\lim_{N\to+\infty}\int_0^T \phi(|\vv(t,\gamma_N(t))|)\, \d t = \int_0^T \phi(|\vv(t,\gamma(t))|)\, \d t.
\end{equation} 
Since $\phi$ is strictly convex and superlinear, by Visintin's Theorem \cite[Thm.~3]{visintin1984strong}
$\vv(\cdot,\gamma_N(\cdot))$   strongly converges in $L^1(0,T)$ to $\vv(\cdot,\gamma(\cdot))$. 
Then, using also the continuity of $\psi$, along a subsequence (still denoted by $\gamma_N$)  we have
\begin{equation}
	\lim_{N\to+\infty}\psi(\vv(t,\gamma_N(t)) + \ff(t,\gamma_N(t)) - \ff^N(t,\gamma_N(t)) = \psi(\vv(t,\gamma(t))), 
	\qquad \text{for a.e. } t \in [0,T].
\end{equation}
\GGG
Since by \eqref{eq:56} and \eqref{eq:60} we have
\begin{equation}
  |{\ff(t,\gamma_N(t)) }- \ff^N(t,\gamma_N(t))|
  \leq  (M+M')\big(1+|\gamma_N(t)|\big)
\end{equation}
\nc 
then, using the doubling property and the uniform convergence
of $\gamma_N$, we can find a constant $C$ such that
\begin{equation}
\psi\Big(\vv(t,\gamma_N(t)) + \ff(t,\gamma_N(t)) - \ff^N(t,\gamma_N(t))\Big)
\leq  C\Big(1+ \phi(|\vv(t,\gamma_N(t))|)\Big).
\end{equation}
By \eqref{contphiN} the generalized dominated convergence theorem (see for instance {\GGG\cite[Thm.~4, page 21]{evans2015measure}})
shows that
\begin{equation}
	\lim_{N\to+\infty}\int_0^T\psi(\vv^N(t,\gamma_N(t))\,\d t  = \int_0^T \psi(\vv(t,\gamma(t)))\,\d t.
\end{equation}

\noindent STEP 5: (\textbf{Definition and convergence of $\cL^N$.}) 
We define $\cL^N:A\to [0,+\infty)$ by
\begin{equation}\label{def:L^k}
\cL^N(\gamma) := \media_0^T L^N(\gamma(t),\xx^N(t))\,\d t.
\end{equation}

Here we show that the sequence
$\cL^N$ converges to $\cL$ uniformly on every compact set $\Lambda \subset A_h$ for some $h\in\N$. 
As in step 4, we fix $\Lambda \subset A_h$ and we prove that  
for every sequence $(\gamma_N)_{N \in \N} \subset \Lambda$, with $\sup_{t \in [0,T]} |\gamma_N(t) - \gamma(t)| \to 0$, we have
$\cL^N(\gamma_N) \to \cL(\gamma)$ as $N\to+\infty$.
Indeed, by \eqref{hp:Lconv},
\begin{equation}
	\lim_{N\to+\infty}L^N(\gamma_N(t),\xx^N(t)) = L(\gamma(t),\mu_t), \qquad \forall\, t \in [0,T].
\end{equation}
Since $(\gamma_N)_N$ is bounded and $\mu^N\to \mu$  in $C([0,T];\cP_1(\R^d))$,
by \eqref{hp:Lconv}
we obtain that
$$ \sup_{N\in\N} L^N(\gamma_N(t),\xx^N(t))<+\infty.$$
By dominated convergence we conclude.

\noindent STEP 6: (\textbf{Conclusion.}) 
By the growth assumption \eqref{eq:57} and \eqref{eq:60} on $\ff,\ff^N$, the doubling property of $\phi$ and \eqref{hp:phi} we have
\begin{equation}\label{lastest}
\cF^N(\gamma) \leq C(1+ \cF(\gamma) + \cG(\gamma)) \qquad \forall\, \gamma\in A, \quad \forall \, N\in \N.
\end{equation}
Moreover, by \eqref{hp:Lconv} and the uniform convergence of $\mu^N$ to $\mu$, there exists a constant $C$ such that
\begin{equation}\label{lastest2}
\cL^N(\gamma) \leq  \cL(\gamma) +C \qquad \forall\, \gamma\in A, \quad \forall \, N\in \N.
\end{equation}
Fix $\varepsilon > 0$ and let $B_\varepsilon$ and $\bar{N}(\varepsilon)$
such that  \eqref{eq:5} holds and
\begin{equation}
 \left| \int_{\Gamma_T} \cF(\gamma)\d\pi^N(\gamma) - \int_{\Gamma_T} \cF(\gamma)\d\pi(\gamma) \right|<\eps. 
\end{equation}
By \eqref{lastest}, \eqref{lastest2} and \eqref{eq:5} we have
\begin{equation}
\int_{\Gamma_T \setminus B_\varepsilon} \cF^N(\gamma) \d(\pi + \pi^N)(\gamma) \leq \varepsilon ,\quad
\int_{\Gamma_T \setminus B_\varepsilon} \cL^N(\gamma) \d(\pi + \pi^N)(\gamma) \leq \varepsilon 
\qquad \forall\, N \geq \bar{N}(\varepsilon).
\end{equation}
Moreover, from the previous step, there exists $\tilde{N}(\varepsilon)$ such that
\begin{equation}
 \sup_{\gamma\in B_\eps} |\cF^N(\gamma)-\cF(\gamma)| \leq \varepsilon ,
 \quad  \sup_{\gamma\in B_\eps} |\cL^N(\gamma)-\cL(\gamma)| \leq \varepsilon 
\qquad \forall\, N \geq \tilde{N}(\varepsilon).
\end{equation}
Hence
\begin{equation}
\begin{split}
\left| \int_{\Gamma_T} \cF^N(\gamma)\d\pi^N(\gamma) - \int_{\Gamma_T} \cF(\gamma)\d\pi(\gamma) \right| 
&\leq \left| \int_{B_\varepsilon} \cF^N(\gamma)\d\pi^N(\gamma) - \int_{B_\varepsilon} \cF(\gamma)\d\pi(\gamma) \right| \\
&+ \left| \int_{\Gamma_T \setminus B_\varepsilon} \cF^N(\gamma)\d\pi^N(\gamma) - \int_{\Gamma_T \setminus B_\varepsilon} \cF(\gamma)\d\pi(\gamma) \right| \\
&\leq \varepsilon + 2\varepsilon, \qquad \forall \, N \geq \max\lbrace{\bar{N}(\varepsilon),\tilde{N}(\varepsilon)\rbrace},
\end{split}
\end{equation}
which shows that
\[ \lim_{N\to \infty} \int_{\Gamma_T} \cF^N(\gamma)\d\pi^N(\gamma) = \int_{\Gamma_T} \cF(\gamma)\d\pi(\gamma).\]
Analogously we obtain
\[ \lim_{N\to \infty} \int_{\Gamma_T} \cL^N(\gamma)\d\pi^N(\gamma) = \int_{\Gamma_T} \cL(\gamma)\d\pi(\gamma).\]

\end{proof}

\subsection{Convergence of minima.}

\GGG
\noindent\textbf{Proof of Theorem \ref{thm:compactness}.}

\noindent
(\textbf{Equicontinuity}) 
\nc
Let $N$ be fixed and $s \leq t$. From the constraint \eqref{eq:state} we get
\begin{equation}
\begin{split}
W_1(\mu^N_s,\mu^N_t) & \leq \frac{1}{N} \sum_{i=1}^N |x_i^N(s)-x_i^N(t)| \\
&\leq \frac{1}{N} \sum_{i=1}^N \int_s^t |\fF^N(x_i(r),\xx^N(r))|\,\d r +  \frac{1}{N} \sum_{i=1}^N \int_s^t |u^N_i(r)|\,\d r \\
&\leq \tilde C(t-s) + \int_s^t \frac{1}{N} \sum_{i=1}^N |u^N_i(r)|\,\d r,
\end{split}
\end{equation}
where $\tilde C:= A + 2B\left( \sup_N|\xx^N(0)|_N + AT + \sup_N \int_0^T |\uu^N(r)|_N\,\d r \right)e^{2BT}$ 
(see the proof of \eqref{equicont}) which is 
uniformly bounded, since $\mu^n_0$ is converging in $\cP_1(\R^d)$ 
and $\cE(\mu^N,\nnu^N)$ is uniformly bounded.

\GGG
By Remark \ref{rem:moment-reinforcement},
we can select an admissible function $\theta$ satisfying \eqref{eq:42} 
with $\cK:=\{\mu_0\}\cup\{\mu^N_0:N\in\N\}$. 
The uniform bound on $\cE(\mu^N,\nnu^N)$ \nc implies that
$$ \sup_N \int_0^T \frac{1}{N} \sum_{i=1}^N \theta (|u^N_i(r)|) \,\d r<+\infty,$$
by the convexity and superlinearity of $\theta$ there exists a uniform modulus of continuity $\omega: [0,+\infty)\to[0,+\infty)$
such that $ \sup_N \int_s^t \frac{1}{N} \sum_{i=1}^N |u^N_i(r)|\,\d r\leq \omega (t-s)$.

Hence we have just shown the equicontinuity property 
\[W_1(\mu^N_s,\mu^N_t) \leq \omega(|t-s|) +\tilde C|t-s| \qquad \forall\, t,s\in[0,T].\]
\textbf{(Compactness)} From Theorem \ref{t.phi_moment} we have
\begin{equation}
\sup_{N\in \N} \sup_{t \in [0,T]} \int_{\R^d} \theta(|x|)\,\d\mu^n_t(x) < +\infty.
\end{equation}
This implies that the family $(\mu^N)_{N \in \N} \subset \cP_1(\R^d)$ is relatively compact, 
(see e.g. \cite[Prop.~7.1.5]{Ambrosio2008gradient}). 

The application of Ascoli-Arzel\`a Theorem provides a limit curve $\mu \in C([0,T];\cP_1(\R^d))$ and a subsequence, 
still denoted by $\mu^N$, such that
\begin{equation}\label{compconv}
 \sup_{t \in [0,T]}W_1(\mu[\xx^N]_t,\mu_t) \to 0. 
 \end{equation}
Concerning the control part we write $\nnu^N = \vv^N\mu^N$. 
Since $\cE^N(\xx^N,\uu^N)$ is uniformly bounded, we have 
\[\sup_{N\in \N}\int_0^T\int_{\R^d}\psi(\vv^N(t,x))\,\d\mu^N_t(x) \,\d t <+\infty.\]
By the superlinearity of $\psi$ and the convergence \eqref{compconv}, using the same argument 
of the proof of \cite[Th.~5.4.4]{Ambrosio2008gradient}
we obtain that there exist $\vv:[0,T]\times\R^d\to\UU $ 
and a subsequence (again denoted by $\vv^N$)
such that
\[\int_0^T\int_{\R^d}\psi(\vv(t,x))\,\d\mu_{t}(x) \,\d t <+\infty\]
and
\[ \lim_{N \to \infty} \int_0^T\int_{\R^d} \xxi(t,x)\cdot \vv^N(t,x) \d\mu^N_t(x)\,\d t 
= \int_0^T\int_{\R^d} \xxi(t,x)\cdot \vv(t,x) \d\mu_t(x)\,\d t, \qquad \forall \, \xxi \in C_c^\infty([0,T] \times \R^d;\R^d).\]
This proves the convergence of $\nnu^N \to \nnu:=\vv\mu$ in $\cM([0,T] \times \R^d;\UU )$
{\GGG and the fact that $(\mu,\nnu)$ satisfy \eqref{eq:vlasov}. \quad$\square$

\bigskip
\noindent
\textbf{Proof of Theorem \ref{t:minima}}

The first two claims are standard consequence of 
the $\Gamma$-convergence result Theorem \ref{th:main} 
and the coercivity property stated in Theorem \ref{thm:compactness}.
We thus consider the third claim.

Let us fix $\mu_0\in \cP_1(\R^d)$ with compact support and $(\mu,\nnu)\in P(\mu_0)$. 
By Theorem \ref{th:main} we can find a sequence of discrete 
solutions $(\hat\xx^N,\hat\uu^N)$ corresponding to initial data 
$\hat\xx_0^N$ supported in $\supp(\mu_0)$ and measures
$(\hat\mu^N,\hat\nnu^N)$ converging to $(\mu,\nnu)$ 
such that \eqref{eq:63} and \eqref{limsup} holds.
Theorem \ref{th:main} also yields
$\lim_{N\to\infty}E^N(\hat\xx^N_0)=E(\mu_0)$.

Let now $(\xx_0^N)_{N\in \N}$ be any other sequence satisfying \eqref{eq:49}
with $(\xx^N,\uu^N)\in P(\xx_0^N)$ and $\mu^N=\mu[\xx^N]$, $\nnu^N=\nnu[\xx^n,\uu^N]$.
Applying Lemma \ref{le:convergence} we deduce that the associated measures
$\mu^N_0$ satisfy
\begin{displaymath}
  \lim_{N\to\infty}\calC_\phi(\hat\mu^N_0,\mu^N_0)=0.
\end{displaymath}
Up to a permutation of the initial points $(\hat x^N_{0,1},\hat x^N_{0,2},\cdots,\hat x^N_{0,N})$
(and of the corresponding solutions $(\hat \xx^N,\hat \uu^N)$)
which however leaves $\hat\mu_0^N,\hat\mu^N,\hat\nnu^N$ invariant,
we may assume by \eqref{eq:36} that 
\begin{equation}
  \label{eq:66}
  c_N=\calC_\phi(\hat\mu^N_0,\mu^N_0)=
  \frac 1N\sum_{i=1}^N\phi(|\hat x^N_{0,i}- x^N_{0,i}|).
\end{equation}
For $0 <\delta <T$ and $\yy^{N,\delta}:= \delta^{-1}(\hat\xx^N_0 -\xx^N_0)$ 
we can then define a new competitor by
\begin{align*}
  \xx^{N,\delta}(t):={}&
  \begin{cases}
    (1-t/\delta)\xx^N_0+t/\delta\, \hat\xx^N_0&\text{if }t\in [0,\delta),\\
    \hat\xx^N(t-\delta)&\text{if }t\in [\delta,T],
  \end{cases}\\
  \uu_i^{N,\delta}(t):={}&
  \begin{cases}
    \yy^{N,\delta}-\fF^N(x_i^{N,\delta},\xx^{N,\delta}(t))&\text{if }t\in [0,\delta),\\
    \hat\uu^N(t-\delta)&\text{if }t\in [\delta,T].
  \end{cases}
\end{align*}
It is easy to check that $(\xx^{N,\delta},\uu^{N,\delta})\in \AA(\xx_0^N)$ 
so that 
$E^N(\xx_0^N)\le \cE^N(\xx^{N,\delta},\uu^{N,\delta})$. 
On the other hand
\begin{align*}
  T\cE^N(\xx^{N,\delta},\uu^{N,\delta})
  &\le 
    \frac 1N\int_0^\delta \sum_{i=1}^N L^N(x_i^{N,\delta}(t),\xx^{N,\delta}(t))\,\d t+
    \frac 1 N \int_0^\delta\sum_{i=1}^N \psi\big( \yy^{N,\delta} - \fF^N(x_i^{N,\delta},\xx^{N,\delta}(t)) \big)\, \d t\\
    &+T\cE^N(\hat\xx^N,\hat\uu^N).
\end{align*}
From the doubling property and the compactness of supports of $(\xx^N_0)$, applying the same argument as in the proof of Theorem \ref{th:main} we get
\begin{align*}
  \psi\big( \yy^{N,\delta} - \fF^N(x_i^{N,\delta},\xx^{N,\delta}(t)) \big) &\le C(1+\phi(|\hat\xx^N_0 -\xx^N_0|/\delta)) \\
  &\le C\mathrm e^{K/\delta}(1+\phi(|\hat\xx^N_0 -\xx^N_0|)) \quad 0<\delta<1.
\end{align*}
Setting $\mu^{N,\delta}_t=\mu[\xx^{N,\delta}(t)]$ we get,
\begin{equation}
  \label{eq:72}
  T\big(\cE^N(\xx^{N,\delta},\uu^{N,\delta})-\cE^N(\hat\xx^N,\hat\uu^N)\big)
  \le C\,c_N\delta (1+\mathrm e^{K/\delta})
    +\delta \sup_{t\in [0,1]}\int_{\R^d}L^N(x,\mu^{N,1}_t)\,\d\mu^{N,1}_t.
\end{equation}
If we choose $\delta=\delta(N):= -K\big(\log(c_N)\big)^{-1}$, since 
$\lim_{N\to\infty}\sup_{t\in [0,1]}W_1(\mu^{N,1}_t,\mu_0)=0$, 
we see that the right hand side of \eqref{eq:72} tends to $0$ as $N\to\infty$, so
that we eventually obtain
\begin{displaymath}
  \limsup_{N\to\infty} E^N(\xx_0^N)\le 
  \limsup_{N\to\infty}\cE^N(\xx^{N,\delta},\uu^{N,\delta})\le 
  \limsup_{N\to\infty}\cE^N(\hat\xx^N,\hat\uu^N) =E(\mu_0).
\end{displaymath}}

\section*{Acknowledgements}
{\GGG We wish to thank Filippo Santambrogio for useful discussions concerning the third claim of Theorem 
\ref{t:minima}.}

{\MF Massimo Fornasier  acknowledges
 the financial support provided by the ERC-Starting Grant ``High-Dimensional Sparse Optimal Control'' (HDSPCONTR) 
 and the DFG-Project FO 767/7-1 ``Identification of Energies from the Observsation of Evolutions''.}
Giuseppe Savar\'e acknowledges the financial support provided by 
Cariplo foundation and Regione Lombardia via project 
``Variational evolution problems and optimal transport''.
Carlo Orrieri acknowledges the financial support provided by PRIN 20155PAWZB ``Large Scale Random Structures''.

\bibliography{biblio}
\bibliographystyle{plain}

\end{document}